\numberwithin{equation}{section}
\newcommand{\R}{\mathbb{R}}
\newcommand{\norm}[1]{\left\lVert#1\right\rVert}
\newcommand{\Acknowledgments}{\ \\\textbf{Acknowledgments:} }
\newtheorem{theorem}{Theorem}[section]
\newtheorem{proposition}[theorem]{Proposition}
\newtheorem{corollary}[theorem]{Corollary}
\newtheorem{lemma}[theorem]{Lemma}
\theoremstyle{definition}
\newtheorem{remark}[theorem]{Remark}
\theoremstyle{definition}
\theoremstyle{definition}
\begin{document}
	\author{Giulio Ciraolo}
	\address{G. Ciraolo. Dipartimento di Matematica "Federigo Enriques",
		Universit\`a degli Studi di Milano, Via Cesare Saldini 50, 20133 Milano, Italy}
	\email{giulio.ciraolo@unimi.it}
	
	\author{Alberto Farina}
	\address{A. Farina. LAMFA, UMR CNRS 7352, Universit\'e Picardie Jules Verne 33, rue St Leu, 
		80039 Amiens, France}
	\email{alberto.farina@u-picardie.fr}

	\author{Troy Petitt}
	\address{T. Petitt. Departamento de Matem\'{a}ticas,
		Universidad Carlos III de Madrid, Avda. de la Universidad, 30, 28911 Leganés, Spain}
	\email{tpetitt@math.uc3m.es}
	
	\begin{abstract}
		We study model semilinear equations on complete and non-compact weighted Riemannian manifolds with \emph{non-negative Bakry-\'{E}mery Ricci curvature}. Our main goal is to classify positive solutions of the equation at the Sobolev-critical exponent, and furthermore to prove that the existence of such solutions implies rigidity of the manifold and triviality of the weight. 
		This is possible when the weighted manifold has non-negative  finite dimensional Bakry-\'{E}mery Ricci curvature, and even under the weaker condition of non-negative infinite dimensional Bakry-\'{E}mery Ricci curvature, up to imposing some additional conditions in the latter case. To exhibit the sharpness of these additional conditions, we construct a non-trivial positive solution of the critical problem on a weighted manifold with positive infinite dimensional curvature.
		We also obtain a corresponding rigidity result for solutions of the Liouville equation on weighted Riemannian surfaces. Finally, we prove some non-existence theorems when the nonlinearity is sub-critical or simply under certain volume growth conditions. {In particular, the latter rules out all positive solutions on \emph{shrinking gradient Ricci solitons}.} 
		\end{abstract}
	\title{Rigidity of weighted manifolds via classification results for semilinear equations}
	\maketitle
	
	\section{Introduction}
	Let $(\mathcal{M},g)$ be a Riemannian manifold of dimension $d\in\mathbb{N}^*$ with $d\geq2$, let $p>1$, and let $u$ be a smooth, non-negative solution to the Lane-Emden equation 
	\begin{equation}\label{leq-unweighted}
		-\Delta u=u^p\quad\mathrm{in}\;\;\mathcal{M}\,.
	\end{equation}
	For $\mathcal{M}=\R^d$ with the Euclidean metric, a series of well-known papers classified non-negative solutions to this problem up to and including the \emph{Sobolev-critical} exponent 
	\begin{equation}\label{sobolev-critical}
		p_S(d):=
		\begin{cases}
			\tfrac{d+2}{d-2} \, \qquad  &\mathrm{for}\;\; d\geq3\,,\\
			\infty\,&\mathrm{for}\;\;d=2\,. 
		\end{cases}
	\end{equation}
	Indeed, for $p$ strictly less than $p_S(d)$ (or for all $p>1$ with $d=2$), it was proved in \cite{GS} that $u=0$ is the only such solution to \eqref{leq-unweighted}. For $d\geq3$, it was proved in \cite{CGS} that the set of positive solutions of \eqref{leq-unweighted} with $p=p_S(d)$ coincides with the two-parameter family defined by
	\begin{equation}\label{at-bubble}
		u(x)=\left(a+b\,|x-x_0|^2\right)^{-\frac{d-2}{2}}\quad\mathrm{for}\;\;a,b>0\;\;\mathrm{satisfying}\;\;1=d(d-2)ab\,,
	\end{equation}
	and any $x_0\in\R^d$. These special solutions are known as
	\emph{Aubin-Talenti bubbles}, and they were first identified in \cite{Aubin,Talenti} as the unique optimizers of the Sobolev inequality in $\R^d$ with sharp constant.  
	
	Already in \cite{GS}, the non-existence (or Liouville-type) result for sub-critical $p$ was extended to complete and non-compact Riemannian manifolds with non-negative Ricci curvature. However, positive solutions of \eqref{leq-unweighted} exist for all $p>1$ in the negatively curved hyperbolic space $\mathcal{M}=\mathbb{H}^d$ - see \cite{BGGV,MS}, so it is not in general possible to relax the non-negative curvature assumption. Returning to the case of non-negative curvature, there have been a few recent partial results concerning \eqref{leq-unweighted} with critical exponent $p=p_S(d)$ in \cite{CM,CFP,FMM}, which can be summarized by the following remarkable double rigidity statement: \emph{if there exists a solution $u>0$ to $-\Delta u=u^\frac{d+2}{d-2}$ in a complete, non-compact Riemannian manifold with non-negative Ricci curvature, then}\footnote{At the present time, this result has been proven only under some restrictions on the dimension $d$ or on the size of $u$.}
	\begin{equation*}
		(\mathcal{M},g) \;\;\mathrm{is}\;\;\mathrm{isometric}\;\;\mathrm{to}\;\;(\R^d,\mathrm{Eucl}_{\R^d})\quad\mathrm{and}\quad u\;\;\mathrm{is}\;\;\eqref{at-bubble}\,.
	\end{equation*}
	
	In dimension $d=2$, the corresponding rigidity result concerns solutions to the Liouville equation 
	\begin{equation}\label{lioueq-unweighted}
		-\Delta u=e^u\quad\mathrm{in}\;\;\mathcal{M}\,.
	\end{equation}
	In \cite{CFP}, the authors proved a sharp result for solutions of \eqref{lioueq-unweighted} under the assumption of non-negative {Gaussian} curvature. In this case, they find that if there exists a solution $u$ of \eqref{lioueq-unweighted} satisfying a certain optimal {asymptotic lower} bound, then $(\mathcal{M},g)$ is isometric to $(\R^2,\mathrm{Eucl}_{\R^2})$, and 
	\begin{equation}\label{log-bubble}
		u(x)=\log\frac{1}{\left(a+b\,|x-x_0|^2\right)^2} \quad\mathrm{for}\;\;a,b>0\;\;\mathrm{satisfying}\;\;1=8ab\,,
	\end{equation}
	for some $x_0\in\R^2$. {Similar results are contained in \cite{CL,CM} by also assuming the finite mass condition $\int e^u < + \infty$. We also mention the recent survey \cite{CL0} {for further references, historical comments, and connections to geometry}.}

	In this work, by suitably adapting the strategy of \cite{CFP}, we shall extend these results to the framework of weighted manifolds, which we introduce now. As before, let $(\mathcal{M},g)$ be a Riemannian manifold of dimension $d\geq2$. For some $f\in C^2(\mathcal{M})$, a weighted Riemannian manifold is the triple $(\mathcal{M},g,\mu)$, where $\mu$ is the measure with density
	\begin{equation*}
		d\mu=e^{-f}\,d\nu\,,
	\end{equation*}
	where $d\nu$ is the Riemannian volume element.
	The associated weighted Laplace operator is\footnote{In the below expression, and from now on, we take for granted that all metric and differential structures are defined with respect to the Riemannian metric $g$, so we choose to omit such references in the notations whenever possible.}
	\begin{equation}\label{def-L}
		\mathcal{L}w \, := \, \Delta w - \nabla f\cdot \nabla w \, = \, e^f\mathrm{div}\,(e^{-f}\,\nabla w) \, ,
	\end{equation}
	for any $w\in C^2(\mathcal{M})$. Clearly when $\nabla f\equiv0$, we reduce to the case of unweighted manifolds. 
	
	We are concerned with classifying non-negative solutions of the Lane-Emden equation
	\begin{equation}\label{leq}
		-\mathcal{L} u \, = \, u^p\, \quad\mathrm{in}\;\;\mathcal{M}\, \qquad\mathrm{for}\quad p>1\,,
	\end{equation}
	and solutions of the Liouville equation 
	\begin{equation}\label{liou-eq}
		-\mathcal{L}u=e^u \quad \mathrm{in}\;\; \mathcal{M}\, ,
	\end{equation}
	in dimension $d=2$.
	In view of the above discussion, such classification is made possible by imposing some Ricci curvature lower bounds on the weighted manifold. There are a few suitable definitions of weighted Ricci curvature in the literature, first appearing in the works \cite{Bakry1,Bakry2} of D. Bakry. Indeed, for any real number $n$ satisfying $n\geq d$, or $n=\infty$, we define the $n$-dimensional Bakry-\'{E}mery Ricci curvature tensor 
	\begin{equation}\label{def-BER}
		\mathrm{Ric}_{n,d}:=
		\begin{cases}
			\mathrm{Ric} + \nabla^2 f\quad &\mathrm{if} \;\; n = \infty\, , \\
			\mathrm{Ric} + \nabla^2 f - \frac{\nabla f \otimes \nabla f}{n-d} \quad &\mathrm{if} \;\; n > d \, , \\
			\mathrm{Ric} & \mathrm{if} \;\; n = d \,. \\
		\end{cases}
	\end{equation}
	We use the convention that $n=d$ if and only if $\nabla f\equiv 0$ , \emph{i.e.} the weight is trivial. 
	With this convention in mind, from \eqref{def-BER}, it follows that $\mathrm{Ric}_{n,d}\leq\mathrm{Ric}_{\infty,d}$ for all $n\geq d$, \emph{i.e.} $\mathrm{Ric}_{n,d}\geq0$ is stronger than $\mathrm{Ric}_{\infty,d}\geq0$. As we will see below, the {real} parameter $n\geq d$ related to Bakry-\'{E}mery Ricci curvature serves as a {virtual} dimension for the manifold $\mathcal{M}$, whose topological dimension is $d\in\mathbb{N}^*$.
	
	Since we will make extensive use of comparison methods, let us briefly summarize what is known when $\mathrm{Ric}$ is replaced by \eqref{def-BER} on weighted manifolds.
	When $(\mathcal{M},g,\mu)$ satisfies $\mathrm{Ric}_{n,d}\geq0$ for $n\geq d$, the standard comparison results (of Laplacian and Bishop-Gromov type) for $\mathrm{Ric}\geq0$ were successfully extended in \cite{BQ,Qian}. The corresponding theory for $\mathrm{Ric}_{\infty,d}\geq0$ is significantly more delicate and was established in \cite{WeiWylie}, up to imposing some further, unavoidable conditions. We summarize these results in Appendix \ref{appen:comp}. 
	Let us quickly note that Laplacian comparison always implies volume growth comparison with the same parameter, which for convenience we call $d$; see \cite{WeiWylie}. 
	So in particular, if we assume 
	\begin{equation}\label{lapl-comp-inf-eq}
		\mathcal{L}(\mathrm{dist}(x,o))\leq \frac{d-1}{\mathrm{dist}(x,o)}\quad\mathrm{weakly}\; \mathrm{in}\;\;\mathcal{M}\,,
	\end{equation}
	for some $o\in\mathcal{M}$,
	we implicitly assume in addition that volume comparison holds with respect to the same point $o$, and with the same $d$, that is, 
	\begin{equation}\label{bis-gro-inf}
		\int_{\mathcal{B}_R(o)}e^{-f}\,\leq C\,R^d\,,\quad\mathrm{for \,all}\;\;R\geq r\,,
	\end{equation}
	for all $r>0$, where $C>0$ depends only on $d$ and the geometry of $\mathcal{M}$ in $\mathcal{B}_r(o)$. {We also remark that condition \eqref{bis-gro-inf} is automatically satisfied when the potential function $f$ is bounded on $\mathcal M$ (see for instance \cite{MuWa} and \cite{Yang}).}
		
	\subsection{Notation} 
	As is quite standard, we may use the notation $\mathrm{dist}(x,o)$ or $|x-o|$ to denote the geodesic distance from a point $x\in\mathcal{M}$ to a fixed point $o\in\mathcal{M}$. Furthermore, we may also consider geodesic radii with symbols $R$, $r$, or $r(x,o)$ which again tacitly refer to the same geodesic distance when no confusion occurs. Finally, an expression like $u\in\mathcal{O}\!\left(|x|^{-\alpha}\right)$ means that $u\in\mathcal{O}\!\left(|x-o|^{-\alpha}\right)$ for some $o\in\mathcal{M}$.
	
	\section{Statement of the main results and paper organization}
	As discussed above, in the weighted manifold framework there are two valid types of weighted Ricci curvature that may replace the standard condition $\mathrm{Ric}\geq0$. First, we state our results involving the Lane-Emden equation \eqref{leq} and the infinite-dimensional Ricci curvature tensor defined in \eqref{def-BER}, and then we provide a discussion and some relevant examples. 
	Due to the notable lack of rigidity of weighted manifolds satisfying $\mathrm{Ric}_{\infty,d}\geq0$, in order to achieve the desired results, it turns out to be necessary (see Section \ref{sec:bubble}) to make some further assumptions on the weighted manifold and the solution: these are the comparison inequalities
	\eqref{lapl-comp-inf-eq} and/or \eqref{bis-gro-inf},
	and the pointwise condition
	\begin{equation}\label{dot-cond}
		\nabla f\cdot\nabla u\leq0\quad\mathrm{in}\;\;\mathcal{M}\,,
	\end{equation}
	where $u$ is the solution of \eqref{leq} or \eqref{liou-eq} under consideration.
	Subsequently, we state some corresponding theorems for the case of finite-dimensional Ricci curvature.
	Finally, we state our classification theorem for the Liouville equation \eqref{liou-eq}.
	
	\subsection{Results concerning the Lane-Emden equation under an infinity curvature condition}
	The following theorem should be interpreted as a rigidity and triviality result for weighted manifolds satisfying $\mathrm{Ric}_{\infty,d}\geq0$ that admit positive solutions of \eqref{leq-d-crit}. That is, if such a solution exists on a suitable weighted manifold, it follows that $u$ is a bubble \eqref{at-bubble}, the manifold is in fact the Euclidean space, and the weight is trivial (a constant).
	
	Concerning the conditions $i)$, $ii)$, and $iii)$ below, these are analogous to the ones used in \cite{CFP} in the unweighted case, which we emphasize are currently not known to be improvable.
	\begin{theorem}\label{thm-infty-crit}
		Let $(\mathcal{M},g,\mu)$ be a complete, connected, non-compact, boundaryless weighted Riemannian manifold of dimension $d \geq 3$. 
		Let $\mathrm{Ric}_{\infty,d}\geq0$ and let $u\in C^3(\mathcal{M})$ be a positive solution of 
		\begin{equation}\label{leq-d-crit}
			-\mathcal{L}u=u^{\frac{d+2}{d-2}} \quad \mathrm{in}\;\; \mathcal{M}\, ,
		\end{equation}
		where $f\in C^2(\mathcal{M})$ additionally satisfies \eqref{dot-cond},
		and also assume that \eqref{bis-gro-inf} holds with respect to some $o\in\mathcal{M}$.
		Assume that any of the three following conditions holds:
		\begin{enumerate}[i)]
			\item $d\in\left\{3,4,5\right\}$\,,
			\item $d\geq6$, \eqref{lapl-comp-inf-eq} holds, and
			\begin{equation}\label{pw-decay-cond-d}
				u\in\mathcal{O}\!\left(|x|^{-\alpha}\right)\;\;\mathrm{as}\;\;|x|\to+\infty\,,\qquad\mathrm{for}\;\;\alpha>\frac{(d-2)(d-6)}{4(d-4)}\,,
			\end{equation}
			\item $u$ has finite weighted energy, namely that
			$$\int_\mathcal{M}e^{-f}\,u^{\frac{2d}{d-2}}<+\infty\,,$$
			and \eqref{lapl-comp-inf-eq} holds.
		\end{enumerate}

		\noindent Then $(\mathcal{M},g)$ is isometric to $(\R^d,\mathrm{Eucl}_{\R^d})$ and $\nabla f\equiv0$, {i.e.} $\mathcal{L}=\Delta$. Furthermore, $u$ is
		precisely the Aubin-Talenti bubble \eqref{at-bubble}.
	\end{theorem}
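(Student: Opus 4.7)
The proof follows the strategy developed in \cite{CFP} for the unweighted case, carefully adapted to the weighted setting by replacing the classical Bochner formula with its Bakry--\'Emery counterpart. My first step is to simplify the equation via the substitution $w := u^{-2/(d-2)}$. A direct computation using $\mathcal{L} u = \Delta u - \nabla f\cdot \nabla u$ shows that \eqref{leq-d-crit} is equivalent to
\begin{equation*}
	w\,\mathcal{L} w \;=\; \frac{d}{2}|\nabla w|^2 + \frac{2}{d-2}\quad\text{in}\;\;\mathcal{M},
\end{equation*}
formally the same identity as in the unweighted case but with $\mathcal{L}$ in place of $\Delta$. The Aubin--Talenti bubbles correspond precisely to $w$ being a positive quadratic function on Euclidean space, which is characterized by the vanishing of the traceless tensor
\begin{equation*}
	T \;:=\; \nabla^2 w - \frac{\mathcal{L} w}{d}\,g,
\end{equation*}
so the goal becomes to establish $T\equiv 0$ and then recover the rigidity of $(\mathcal{M},g)$ and the triviality of $f$.

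The key ingredient is the weighted Bochner formula
\begin{equation*}
	\tfrac{1}{2}\,\mathcal{L}|\nabla w|^2 \;=\; |\nabla^2 w|^2 + \nabla w\cdot \nabla(\mathcal{L}w) + \mathrm{Ric}_{\infty,d}(\nabla w,\nabla w),
\end{equation*}
in which the infinite-dimensional Bakry--\'Emery tensor appears naturally and is non-negative by hypothesis. Combining this identity with a refined Kato inequality that bounds $|\nabla^2 w|^2$ from below by $|T|^2$ plus a multiple of $\bigl|\,\nabla|\nabla w|\,\bigr|^2$ with a sharp dimension-dependent constant, and substituting the derivative of the transformed equation dotted with $\nabla w$, I would derive an elliptic differential inequality for a $P$-function built from $|\nabla w|^2$ and suitable powers of $w$, designed so that $P$ vanishes identically on the bubble.

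The core analytic step is then to multiply this inequality by an appropriate cutoff, integrate against $e^{-f}\,d\nu$, and show that the unfavorable terms either cancel or vanish at infinity. Integration by parts against the weight produces a cross-term proportional to $\nabla f\cdot \nabla w$; since the substitution $u\mapsto w$ reverses monotonicity (one has $\nabla w = -\tfrac{2}{d-2}u^{-d/(d-2)}\nabla u$), assumption \eqref{dot-cond} translates to $\nabla f\cdot \nabla w\geq 0$, which is precisely the sign needed to discard that bad contribution. The volume growth \eqref{bis-gro-inf} lets us send the cutoff to a global exhaustion, and in cases $ii)$ and $iii)$ the pointwise Laplacian comparison \eqref{lapl-comp-inf-eq} provides the additional control needed to manipulate test functions depending on the distance. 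In case $i)$, the refined Kato constant is strong enough in low dimensions $d\in\{3,4,5\}$ to close the argument without further hypotheses on $u$; in case $ii)$, the decay \eqref{pw-decay-cond-d} compensates the deficient Kato constant for $d\geq 6$; in case $iii)$, the finite weighted energy plays the same role as pointwise decay. In all three situations, we conclude that
\begin{equation*}
	T\equiv 0,\qquad \mathrm{Ric}_{\infty,d}(\nabla w,\nabla w)\equiv 0,\qquad \nabla f\cdot \nabla w\equiv 0.
\end{equation*}

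From $\nabla^2 w = \tfrac{\mathcal{L} w}{d}\,g$ combined with the transformed equation and the vanishing of $\nabla f\cdot \nabla w$, the scalar $\mathcal{L}w/d$ must be a positive constant, making $w$ a function with proportional Hessian on a complete manifold; by the classical Obata-type rigidity, $(\mathcal{M},g)$ is isometric to $(\R^d,\mathrm{Eucl}_{\R^d})$ and $w(x) = a + b|x-x_0|^2$ for some constants $a,b>0$ and $x_0\in\R^d$. In this Euclidean setting, $\mathrm{Ric}_{\infty,d}(\nabla w,\nabla w)\equiv 0$ reduces to $\nabla^2 f(\nabla w,\nabla w)\equiv 0$, and combining this with $\nabla f\cdot \nabla w\equiv 0$ and the fact that $\nabla w$ sweeps out all of $\R^d$ as $x$ varies, one deduces $\nabla f\equiv 0$, so $\mathcal{L}=\Delta$ and $u$ is the Aubin--Talenti bubble \eqref{at-bubble}. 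The main obstacle will be the integration-by-parts step: the weighted cross-terms $\nabla f\cdot \nabla(\cdots)$ must be absorbed using only \eqref{dot-cond}, and the available volume growth \eqref{bis-gro-inf} is weaker than classical Bishop--Gromov, so the choice of cutoff and the exponents in the $P$-function must be calibrated very carefully, particularly in case $i)$ where no additional integrability or decay is assumed.
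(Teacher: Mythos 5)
Your high-level strategy matches the paper's: substitution $v=u^{-2/(d-2)}$, the Bakry--\'Emery Bochner formula, an integration-by-parts argument against $e^{-f}\,d\nu$ in which \eqref{dot-cond} supplies the correct sign of the weighted cross-term $\nabla f\cdot\nabla v$, and then Tashiro-/Obata-type rigidity to conclude. The rigidity endgame you describe is also fine, although slightly different from the paper's: the paper shows $\nabla\mathtt{P}\equiv 0$ forces $\nabla f\cdot\nabla v\equiv 0$, and since $v$ is an explicit quadratic on $\R^d$, $\nabla f\cdot\nabla v\equiv 0$ alone gives $r\,\partial_r f\equiv 0$ and hence $\nabla f\equiv 0$ by continuity at the vertex --- the paper never needs $\mathrm{Ric}_{\infty,d}(\nabla v,\nabla v)\equiv 0$. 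Your version invoking $\nabla^2 f(\nabla w,\nabla w)\equiv 0$ is redundant but not wrong.

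However, the crucial analytic step is mis-identified. You propose to close the pointwise differential inequality for the P-function using a \emph{refined Kato inequality} bounding $\|\nabla^2 w\|^2$ from below by $|T|^2$ plus a dimension-sharp multiple of $\bigl|\nabla|\nabla w|\bigr|^2$. The paper does not use Kato at all. The engine is instead the \emph{divergence identity}
\begin{equation*}
	m\,v^{1-m}\,\mathsf{k}[v]\;=\;\mathrm{div}_{\!f}\!\left(v^{2-m}\,\nabla\mathtt{P}\right),
\end{equation*}
with $\mathtt{P}=\mathcal{L}v$ and $\mathsf{k}[v]=\|\nabla^2 v\|^2-\tfrac{(\mathcal{L}v)^2}{m}+\mathrm{Ric}_{\infty,d}(\nabla v,\nabla v)$. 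Combined with the elementary Cauchy--Schwarz estimate $|\nabla\mathtt{P}|^2\le\tfrac{2m}{v}\,\|\nabla^2 v-\tfrac{\mathcal{L}v}{m}g\|^2\,\mathtt{P}$ coming directly from $\nabla\mathtt{P}=\tfrac{m}{v}(\nabla^2v-\tfrac{\mathcal{L}v}{m}g)\nabla v$, this gives the key pointwise inequality
\begin{equation*}
	\left(t-\tfrac{1}{2}\right)\mathtt{P}^{t-2}v^{2-m}|\nabla\mathtt{P}|^2+m\,\mathtt{P}^{t-1}v^{1-m}\,W_{\!f}[v]\;\le\;\mathrm{div}_{\!f}\!\left(\mathtt{P}^{t-1}v^{2-m}\nabla\mathtt{P}\right),
\end{equation*}
where the cross-term $\tfrac{2}{m}\mathcal{L}v\,\nabla f\cdot\nabla v$ sits inside $W_{\!f}[v]$ and is where \eqref{dot-cond} is used. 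A Kato-type lower bound on $\|\nabla^2 w\|^2$ does not by itself produce the $\mathrm{div}_{\!f}$ structure that makes the subsequent cutoff/integration argument work, and in particular would not deliver the crucial factor $|\nabla\mathtt{P}|^2$ on the left that eventually forces $\nabla\mathtt{P}\equiv 0$.

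Moreover, your proposal is silent on how cases $i)$, $ii)$, $iii)$ are actually closed, which is where most of the genuine work lies. In case $i)$ the paper splits $d\in\{3,4,5\}$ into three separate computations (with $d=5$ requiring an additional absorption step via an auxiliary integration-by-parts identity); in case $ii)$ it needs a Cheng--Yau-type gradient estimate (Lemma~\ref{yau-lem}), and this is exactly the point where \eqref{lapl-comp-inf-eq} enters; in case $iii)$ it first upgrades finite $L^{2d/(d-2)}$-energy to finite weighted Dirichlet energy. Each case also relies on a family of integral estimates of the form $\int_{\mathcal{B}_R}e^{-f}v^{-q}\lesssim R^{d-q}$ whose derivation requires \eqref{bis-gro-inf}. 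None of this is reproducible from the outline as written, so the proposal, while aimed in the right direction, has a genuine gap at the central analytic step.
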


\begin{remark}\label{rem:rough-lapl}
	Although for simplicity we include \eqref{lapl-comp-inf-eq} as a hypothesis in Theorem \ref{thm-infty-crit} \emph{ii)}, it is in fact only necessary to assume a significantly weaker pointwise control of the form
	\begin{equation}\label{rough-lapl-comp}
		\mathcal{L}(\mathrm{dist}(x,o))\leq \frac{\mathsf{C}}{\mathrm{dist}(x,o)}\quad\mathrm{weakly}\; \mathrm{in}\;\;\mathcal{M}\,,
	\end{equation}
	for some $o\in\mathcal{M}$ and \emph{any} $\mathsf{C}>0$, (the constant $\mathsf{C}>0$ in \eqref{rough-lapl-comp} can be much larger than the dimension $d$). Indeed, in the proof, assumption \eqref{lapl-comp-inf-eq} is only used via the Yau-type estimate found in Lemma \ref{yau-lem}, and the Laplacian comparison assumption in the statement of Lemma \ref{yau-lem} is precisely \eqref{rough-lapl-comp}. For Theorem \ref{thm-infty-crit} \emph{iii)} on the other hand, \eqref{lapl-comp-inf-eq} with the precise constant $d$ is needed.
\end{remark}
	
\begin{remark} 
	One may ask about the suitability of the additional conditions \eqref{lapl-comp-inf-eq} and/or \eqref{bis-gro-inf} and \eqref{dot-cond} used in the statement of Theorem \ref{thm-infty-crit}. As is well-known, the condition $\mathrm{Ric}_{\infty,d}\geq0$ is too weak by itself to obtain the expected Bishop-Gromov, Laplacian comparison, or Myers'-type results; see \cite[Section 2]{WeiWylie} for some counterexamples. Therefore it is quite reasonable to expect that in the current context, one must add some suitable conditions to reinforce the rigidity of the problem.
	
	From the technical point of view, \eqref{dot-cond} is the crucial condition that allows us to completely exploit the methods introduced in \cite{CFP}. However, in Section \ref{sec:bubble}, we prove that \eqref{dot-cond} is \emph{anything but a technical assumption}: in fact, we construct a family of weighted manifolds and solutions of \eqref{leq-d-crit} satisfying $\mathrm{Ric}_{\infty,d}\geq0$, \eqref{bis-gro-inf}, and \eqref{pw-decay-cond-d}--\eqref{rough-lapl-comp} \emph{but not \eqref{dot-cond}}, \emph{i.e.} valid counterexamples for Theorem \ref{thm-infty-crit} when \eqref{dot-cond} does not hold, for all $d\geq3$ (in view of Remark \ref{rem:rough-lapl} and the observation that \eqref{asy-upp-bd-u} implies \eqref{pw-decay-cond-d} for $p=p_S(d)$). See Theorem \ref{constr-sol-theo} below.
	
	As for the relation between \eqref{lapl-comp-inf-eq}--\eqref{bis-gro-inf} and \eqref{dot-cond}, in the radial case, \emph{i.e.} when $\mathcal{M}$, $f$, and $u$ are all rotationally symmetric about a pole $o\in\mathcal{M}$, \emph{\eqref{dot-cond} implies \eqref{lapl-comp-inf-eq}--\eqref{bis-gro-inf}}. This follows from \cite[Theorem 1.2]{WeiWylie} and the observation that such solutions are monotonically decreasing along geodesics starting from $o\in\mathcal{M}$. 
\end{remark}
			
We formalize the above remark in the following Theorem, whose proof is postponed until Section \ref{sec:bubble}.
\begin{theorem}\label{constr-sol-theo}
	For all $d\geq3$, $d\in\mathbb{N}$, there exists a smooth weighted Riemannian model manifold\footnote{Meaning that the metric and the weight are radially symmetric about some point.} $(\mathcal{M},g, \mu)$ of dimension $d$ with pole $o\in\mathcal{M}$ satisfying $\mathrm{Ric}_{\infty,d}\geq0$ in $\mathcal{M}$ and $\mathrm{Ric}_{\infty,d}>0$ in $\mathcal{M}\setminus \{o\}$ and admitting, for any $\ell>0$ and any $p\geq\tfrac{d+2}{d-2}$, a smooth, positive, radial solution $u$ of \eqref{leq} satisfying the following conditions:
	\begin{enumerate}[1)]
		\item $\mathcal{M}$ is complete, non-compact, boundaryless, and diffeomorphic to $\R^d$\,,
		\item Bishop-Gromov volume comparison \eqref{bis-gro-inf} holds\,,
		\item Laplacian comparison \eqref{lapl-comp-inf-eq} does not hold, but \eqref{rough-lapl-comp} does hold\,,
		\item $u(o)=\ell$\,,
		\item $u$ is decreasing along all geodesics starting from $o$,
		\item $\nabla f \cdot \nabla u> 0$ in $\mathcal{M}\setminus\{o\}$\,,
		\item $u$ satisfies
		\begin{equation}\label{asy-upp-bd-u}
			u(r)\leq \left(\frac{1}{C\,r^2+\ell^{1-p}}\right)^{\frac{1}{p-1}}\quad\mathrm{for}\;\mathrm{all}\;\;r>0\,,
		\end{equation} 
		for $C>0$.
	\end{enumerate}
\end{theorem}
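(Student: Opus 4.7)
The plan is to construct $(\mathcal{M}, g, \mu)$ and $u$ via a rotationally symmetric warped-product ansatz. Write $\mathcal{M}$ as a manifold diffeomorphic to $\R^d$ with metric $g = dr^2 + h(r)^2\,g_{S^{d-1}}$ and radial weight $f = f(r)$, and look for a radial $u = u(r)$. The equation $-\mathcal{L}u = u^p$ then becomes the single ODE
\[u''(r) + \bigl[(d-1)h'(r)/h(r) - f'(r)\bigr]\,u'(r) + u(r)^p = 0,\]
one constraint on three radial unknowns, leaving two functional degrees of freedom which will be used to enforce all of 1)--7) together with $\mathrm{Ric}_{\infty,d}\geq 0$.

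First I would fix the pseudo-bubble $u(r) = (Cr^2 + \ell^{1-p})^{-1/(p-1)}$, which automatically satisfies 4), 5), and 7) with equality. Evaluating the ODE at $r=0$ (using $u'(0)=f'(0)=0$ and the L'H\^opital identity $(d-1)(h'/h)u'|_{r=0} = (d-1)u''(0)$) forces $u''(0) = -\ell^p/d$, whence $C = (p-1)/(2d)$; this consistency is precisely what makes $f$ smooth at the pole. With $u$ so chosen, $\mathcal{L}(r) := (d-1)h'/h - f' = -(u''+u^p)/u'$ is rigidly determined, and direct computation gives
\[\mathcal{L}(r) - \tfrac{d-1}{r} \;=\; \frac{2(\beta+1)Ar}{1+Ar^2}, \qquad \beta = \tfrac{1}{p-1},\ A = C\ell^{p-1}.\]
The right-hand side is strictly positive on $(0,\infty)$ yet bounded above by $2(\beta+1)/r$ (since $Ar^2/(1+Ar^2)\leq 1$), which yields 3) with $\mathsf{C} = d+2\beta+1$: the bound \eqref{lapl-comp-inf-eq} is violated while \eqref{rough-lapl-comp} holds.

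Next I would spend the remaining freedom in $h$ (with $f$ recovered via $f' = (d-1)h'/h - \mathcal{L}(r)$) to ensure $\mathrm{Ric}_{\infty,d}\geq 0$ with strict inequality off $o$, together with $f'(r)<0$ on $(0,\infty)$; the latter, combined with $u'<0$, yields 6) via $\nabla f\cdot\nabla u = f'u'$. In the warped product, $\mathrm{Ric}_{\infty,d} = \mathrm{Ric}(g) + \nabla^2 f$ decomposes into two explicit scalar inequalities in $h,h',h''$ and $f',f''$. A model choice such as $h(r) = r/\sqrt{1+ar^2}$ makes the unweighted Ricci strictly positive off $o$, and direct calculation with the induced $f$ then verifies both the strict positivity of $\mathrm{Ric}_{\infty,d}$ and the sign $f'(r) < 0$. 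Smoothness at the pole and condition 1) follow from $h$ being smooth and odd at $r=0$ with $h'(0)=1$.

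The main obstacle is the polynomial volume bound 2). Because $\mathcal{L}(r)$ is already prescribed by $u$, the weighted surface measure $W(r) := h(r)^{d-1}e^{-f(r)}$ is rigidly fixed (up to a multiplicative constant) by $W'/W = \mathcal{L}(r)$, namely $W(r) = W_0\,r^{d-1}\exp\bigl(\int_0^r [\mathcal{L}(s) - (d-1)/s]\,ds\bigr)$; for the pseudo-bubble above the integrand is strictly positive and its primitive grows logarithmically in $R$, producing volume growth $\sim R^{d+2\beta+2}$, in conflict with 2). The crux of the technical work is therefore to modify $u$ near the pole (preserving 4)--7)) while arranging that $u$ coincides with an exact bubble solving $-\Delta u = u^p$ on an outer region $\{r>R_0\}$ equipped with the warping $h(r)=r/\sqrt{1+ar^2}$; on that outer region one has $\mathcal{L}(r) = (d-1)/r$ identically, so $W(r) \leq C r^{d-1}$ and hence $\mu(B_R) \leq C R^d$ for all $R$, giving 2). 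The remaining verifications of curvature positivity, the sign of $f'$, and smoothness at the matching radius $R_0$ are then routine computations in the warped-product formulas.
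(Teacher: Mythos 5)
Your computation for the ``inverse'' approach is correct as far as it goes: fixing $u(r)=(Cr^2+\ell^{1-p})^{-1/(p-1)}$ forces the drift $\mathcal{L}(r)=(d-1)\tfrac{h'}{h}-f'=-\tfrac{u''+u^p}{u'}$, and your formula $\mathcal{L}(r)-\tfrac{d-1}{r}=\tfrac{2(\beta+1)Ar}{1+Ar^2}$ with $C=\tfrac{p-1}{2d}$ checks out. You also correctly identify the fatal consequence: the drift is asymptotically $(d+2\beta+1)/r$, which rigidly pins the weighted surface area $W(r)=e^{-f}h^{d-1}$ and produces $\mu(\mathcal{B}_R)\sim R^{d+2\beta+2}$, incompatible with condition 2). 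The problem is that everything after this point---the ``crux of the technical work''---is asserted rather than done, and it is exactly there that the proof would have to live. To patch a modified near-pole $u$ onto an outer piece with $\mathcal{L}(r)=(d-1)/r$, you must produce a single smooth triple $(h,f,u)$ across the matching radius $R_0$ that simultaneously keeps $u$ monotone, keeps $f'<0$, keeps both the radial and angular components of $\mathrm{Ric}_{\infty,d}$ strictly positive off the pole, keeps the quotient $\mathcal{L}(r)$ in the window $(d-1)/r < \mathcal{L}(r)\leq \mathsf{C}/r$ (so that 3) holds), and preserves the pointwise bound \eqref{asy-upp-bd-u}. None of these is routine: perturbing $u$ near the pole changes $\mathcal{L}(r)$ there, which feeds back into the curvature and volume constraints through $h,f$. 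You also need to be careful that the ``exact bubble'' on the outer region is not the pseudo-bubble (for supercritical $p$ the pseudo-bubble is only the asymptotic profile, not an exact solution of $-\Delta u=u^p$), so what the outer $u$ actually is, and whether it matches smoothly, is left unresolved.

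The paper closes this gap by going in the opposite direction. Instead of prescribing $u$ and trying to back out $(h,f)$, it fixes the warping $\psi$ satisfying \eqref{psi-nec2}--\eqref{psi-nec4} (e.g.\ $\psi(r)=\alpha r + (1-\alpha)r/\sqrt{r^2+1}$) and then defines the weight by the first-order linear ODE \eqref{ode-rel}, $f''+2\tfrac{\psi'}{\psi}f'=(d-1)\tfrac{\psi''}{\psi}$. That identity is engineered precisely so that the key inequality \eqref{infty-exist-cond} holds with equality, making the Pohozaev/Lyapunov function $\mathcal{P}_u$ in \eqref{def-pur} non-increasing ($\mathcal{K}\leq 0$), which is what proves the positive radial solution of \eqref{ODE} exists globally without ever writing it down explicitly. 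All seven conditions (the volume bound 2), the failure of \eqref{lapl-comp-inf-eq}, the sign of $f'$, and \eqref{asy-upp-bd-u}) then follow from the explicit signs and asymptotics of $\psi$ and $f$ alone, avoiding the over-determination your approach runs into. Both proofs share the same warped-product ansatz and the same ODE, but the paper's forward construction with a Lyapunov-functional existence argument is what actually completes the proof; your proposal correctly locates the tension but leaves the central gluing step unproved.
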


We point out that the above Theorem furnishes a new example of a weighted manifold with positive curvature where Laplacian comparison \eqref{lapl-comp-inf-eq} does not hold. Whereas the example given in \cite[Example 2.3]{WeiWylie} is "pathological" in the sense of differing strongly from $\R^d$ (it has exponential weighted volume growth and lacks symmetry), our example exhibits the expected polynomial volume growth and full radial symmetry.
\normalcolor
	
	Next, let us discuss our non-existence results. The two following Liouville-type theorems are proven independently and have quite different hypotheses. First, Theorem \ref{thm-infty} requires a Ricci curvature condition, the condition \eqref{dot-cond}, \emph{and} a Bishop-Gromov volume growth condition, but is valid up to the classical Sobolev exponent \eqref{sobolev-critical}. On the other hand, Theorem \ref{thm:vol-growth-liou} holds under a \emph{sole} volume growth hypothesis, but is valid only up to a lower exponent.
	When $d=2$, the non-existence result holds under the sole assumption of \eqref{bis-gro-inf}, as follows from Theorem \ref{thm:vol-growth-liou}. Note that in this case, \eqref{bis-gro-inf} implies the main assumption of Theorem \ref{thm:vol-growth-liou}.
	\normalcolor  
	
	\begin{theorem}\label{thm-infty}
		Let $(\mathcal{M},g,\mu)$ be a complete, connected, non-compact, boundaryless weighted Riemannian manifold of dimension $d \geq 2$. 
		Let $\mathrm{Ric}_{\infty,d}\geq0$, and let $u\in C^3(\mathcal{M})$ be a non-negative solution of 
		\begin{equation*}
			-\mathcal{L}u=u^p\quad\mathrm{in}\;\;\mathcal{M}\,,\quad\mathrm{for}\;\mathrm{some}\;\;1<p<p_S(d)\,,
		\end{equation*}
		where $f\in C^2(\mathcal{M})$ additionally satisfies \eqref{dot-cond} with respect to $u$, and also assume that \eqref{bis-gro-inf} holds with respect to some $o\in\mathcal{M}$.
		Then $u\equiv0$.
	\end{theorem}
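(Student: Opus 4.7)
The plan is to adapt the integral scheme of Gidas--Spruck \cite{GS} to the weighted setting, using \eqref{dot-cond} to neutralize the extra drift contributions and \eqref{bis-gro-inf} as the essential volume-growth input. Since $-\mathcal{L}u=u^p\geq 0$ on a connected manifold, the weighted strong maximum principle reduces matters to the case $u>0$ in $\mathcal{M}$ (otherwise $u\equiv 0$ already); the goal is then to reach a contradiction.

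The key analytic ingredient is a refined Bochner inequality. From the weighted Bochner identity
\[
\tfrac{1}{2}\mathcal{L}|\nabla u|^2 \;=\; |\nabla^2 u|^2 + \langle\nabla\mathcal{L}u,\nabla u\rangle + \mathrm{Ric}_{\infty,d}(\nabla u,\nabla u),
\]
together with $\mathrm{Ric}_{\infty,d}\geq 0$ and $\nabla\mathcal{L}u=-pu^{p-1}\nabla u$ from the equation, one obtains $\tfrac{1}{2}\mathcal{L}|\nabla u|^2 \geq |\nabla^2 u|^2 - pu^{p-1}|\nabla u|^2$. At this point the assumption \eqref{dot-cond} plays a decisive role: since $\mathcal{L}u=-u^p\leq 0$ and $\nabla f\cdot\nabla u\leq 0$ have the same sign, the decomposition $\Delta u=\mathcal{L}u+\nabla f\cdot\nabla u$ yields $(\Delta u)^2\geq (\mathcal{L}u)^2=u^{2p}$, and combined with the standard refined Cauchy--Schwarz $|\nabla^2 u|^2\geq (\Delta u)^2/d$ (using the \emph{topological} dimension $d$) this gives
\[
\tfrac{1}{2}\mathcal{L}|\nabla u|^2 \;\geq\; \tfrac{1}{d}\,u^{2p} - p\,u^{p-1}|\nabla u|^2,
\]
which is formally identical to the unweighted Gidas--Spruck inequality.

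Next I would multiply this inequality, and separately the PDE itself, by $u^{\gamma}\varphi_R^{2k}$ for a standard radial cutoff $\varphi_R\in C^\infty_c(\mathcal{M})$ with $\varphi_R\equiv 1$ on $B_R(o)$, $\mathrm{supp}\,\varphi_R\subset B_{2R}(o)$, and $|\nabla\varphi_R|\leq C/R$, and integrate against the weighted measure $e^{-f}d\nu$. Integration by parts transfers $\mathcal{L}$-derivatives to $\varphi_R$ without producing extra drift terms, so after absorption via Young's inequality the calculation formally coincides with the Euclidean Gidas--Spruck algebra. For $\gamma$ in an admissible range that is non-empty precisely when $p<p_S(d)$, one obtains the Caccioppoli-type estimate
\[
\int_\mathcal{M} e^{-f}u^{p+\gamma+1}\varphi_R^{2k}\, d\nu \;\leq\; \frac{C}{R^{2s}}\int_{B_{2R}(o)\setminus B_R(o)} e^{-f}u^{p+\gamma+1}\, d\nu,
\]
for some $s>0$ compatible with $k$ and $\gamma$.

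Finally, by \eqref{bis-gro-inf} together with Hölder's inequality, the right-hand side is bounded by $C R^{-2s+d\theta}\cdot I^{1-\theta}$, where $I:=\int_\mathcal{M} e^{-f}u^{p+\gamma+1}$ and $\theta\in[0,1)$ is an interpolation exponent. The admissible choice of $(\gamma,k,s,\theta)$ can be made so that $2s>d\theta$ strictly, forcing the right-hand side to $0$ as $R\to\infty$; hence $I=0$ and $u\equiv 0$. The main obstacle is the exponent bookkeeping: one must check that the admissible window for $\gamma$ is non-empty exactly under the sub-critical restriction $p<p_S(d)$, so that the dimension $d$ appearing via refined Cauchy--Schwarz matches the volume-growth exponent $d$ in \eqref{bis-gro-inf} and reproduces the sharp Sobolev threshold. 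The condition \eqref{dot-cond} is the reason this matching is possible at all in the weighted framework, since it is precisely what lets us pass from $|\mathcal{L}u|$ to $|\Delta u|$ with the topological constant $d$.
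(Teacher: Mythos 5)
Your key observation --- that under \eqref{dot-cond} the two terms in $\Delta u = \mathcal{L}u + \nabla f\cdot\nabla u$ have the same sign, so $(\Delta u)^2 \geq (\mathcal{L}u)^2 = u^{2p}$ --- is correct and is in spirit the same sign-structure the paper exploits (there it appears as $W_f[v]\geq 0$). However, the step you defer as ``exponent bookkeeping'' is where the argument breaks. The inequality $|\nabla^2 u|^2 \geq (\Delta u)^2/d$ you call ``refined Cauchy--Schwarz'' is only the basic trace inequality. Feeding it into Bochner gives $\tfrac12\mathcal{L}|\nabla u|^2 \geq \tfrac1d u^{2p} - pu^{p-1}|\nabla u|^2$, and testing this against $e^{-f}u^\gamma\varphi^{2k}$ together with the PDE identity $\int e^{-f} u^{\gamma+p-1}|\nabla u|^2\varphi^{2k} = \tfrac{1}{\gamma+p}\int e^{-f}u^{\gamma+2p}\varphi^{2k} + (\text{cutoff})$ yields an admissible $\gamma$-window that is non-empty only for $p < \tfrac{d+2}{2(d-1)}$, which is strictly below $p_S(d)=\tfrac{d+2}{d-2}$. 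The Euclidean Gidas--Spruck argument reaches the Sobolev threshold precisely because it uses a genuinely refined Cauchy--Schwarz, of the form
\begin{equation*}
|\nabla^2 u|^2 \;\geq\; \frac{(\Delta u)^2}{d} + \frac{d}{d-1}\left(\nabla^2 u(\nu,\nu) - \frac{\Delta u}{d}\right)^2,\qquad \nu=\frac{\nabla u}{|\nabla u|}\,,
\end{equation*}
whose expansion contains the cross term $-\tfrac{2}{d-1}\,\Delta u \cdot \nabla^2u(\nu,\nu)$. In the unweighted case $\Delta u = -u^p$ has a definite sign there; in the weighted case, $\Delta u = -u^p + \nabla f\cdot\nabla u$ introduces an extra piece $-\nabla f\cdot\nabla u\cdot\nabla^2 u(\nu,\nu)$, which is a product of a non-positive factor with $\nabla^2 u(\nu,\nu)$ of \emph{indeterminate} sign. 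The hypothesis \eqref{dot-cond} does not control this term, so the weighted calculation does \emph{not} ``formally coincide'' with the Euclidean one.

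The paper avoids the obstruction by working with $\mathtt{P}=\mathcal{L}v$ for $v=u^{-(p-1)/2}$ rather than directly with $|\nabla u|^2$. The Cauchy--Schwarz is applied to $\nabla\mathtt{P} = \tfrac{m}{v}\bigl(\nabla^2 v - \tfrac{\mathcal{L}v}{m}g\bigr)\nabla v$, so the matrix being squared is $\nabla^2 v - \tfrac{\mathcal{L}v}{m}g$, not a trace-free part of $\nabla^2 u$. When $\|\nabla^2 v - \tfrac{\mathcal{L}v}{m}g\|^2$ is expanded against $\mathsf{k}[v]$ and $W_f[v]$, the drift cross term comes out as $\tfrac{2}{m}\,\mathcal{L}v\,(\nabla f\cdot\nabla v)$ --- a product of two quantities that both have a \emph{fixed} sign under \eqref{dot-cond} (since $\mathcal{L}v = \mathtt{P}>0$ and $\nabla f\cdot\nabla v\geq 0$). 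That is why the change of variables to $v$ is not cosmetic: it is precisely what converts the uncontrollable Hessian cross term in your approach into the sign-definite $W_f[v]\geq0$ in the paper. To repair your argument you would effectively need to re-introduce this change of variables, at which point you would be reproducing the paper's P-function proof.
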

{In the following result, which is proven in Section \ref{sec:prelim:main}, we do not make any curvature assumption, we only assume a volume-growth condition.}
	
	\begin{theorem}\label{thm:vol-growth-liou}
		Let $(\mathcal{M},g,\mu)$ be a complete, connected, non-compact, boundaryless weighted Riemannian manifold of dimension $d\geq2$. Let $u\in C^3(\mathcal{M})$ be a non-negative solution of 
		\begin{equation*}
			-\mathcal{L}u=u^p\quad\mathrm{in}\;\;\mathcal{M}\,,
		\end{equation*}
		for some $p>1$. Then\footnote{On the right-hand side of the following equation, we are referring to \emph{little-o notation}.}, if $\mu(\mathcal{B}_{R}(o))=o\!\left(R^{\frac{2p}{p-1}}\right)$ for some $o\in\mathcal{M}$, it follows that $u\equiv0$.
	\end{theorem}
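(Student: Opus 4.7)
The plan is to apply a test-function argument in the spirit of Mitidieri--Pohozaev and Serrin--Zou, adapted to the weighted Riemannian framework. I fix the critical exponent $q := 2p/(p-1)$ and an auxiliary parameter $\alpha > 0$. For each $R > 0$, let $\chi_R : \mathcal{M} \to [0,1]$ be a Lipschitz cutoff function equal to $1$ on $\mathcal{B}_R(o)$, vanishing outside $\mathcal{B}_{2R}(o)$, and satisfying $|\nabla \chi_R| \leq C/R$ pointwise. Testing the equation $-\mathcal{L}u = u^p$ against $u^\alpha \chi_R^{\,q(p+\alpha)/p}$ and integrating against $d\mu$, the compact support of the test function together with the self-adjointness of $\mathcal{L}$ with respect to $\mu$ allow one integration by parts, yielding an identity of the form
\[
\int_\mathcal{M} u^{p+\alpha} \chi_R^{\,q(p+\alpha)/p} \, d\mu = \alpha \int u^{\alpha-1} |\nabla u|^2 \chi_R^{\,q(p+\alpha)/p} \, d\mu + (\text{cross term with }\nabla u\cdot\nabla\chi_R).
\]

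The next step is a Caccioppoli-type absorption: I apply Young's inequality to the cross term with a small parameter $\epsilon > 0$ to move a fraction of the weighted Dirichlet integral to the left-hand side, then apply Hölder's inequality with the conjugate exponents $(p+\alpha)/(\alpha+1)$ and $(p+\alpha)/(p-1)$. These exponents are dictated by the scaling so that, once $q = 2(p+\alpha)/(p-1)$ is fixed, the power of $\chi_R$ in the remaining volume integral collapses to zero. After rearranging (using the fact that the resulting Hölder exponent $(\alpha+1)/(p+\alpha)$ is strictly less than $1$ because $p > 1$), one should obtain the clean decay estimate
\[
\int_\mathcal{M} u^{p+\alpha}\chi_R^{\,q(p+\alpha)/p} \, d\mu \; \leq \; C \int_{\mathcal{B}_{2R}(o) \setminus \mathcal{B}_R(o)} |\nabla \chi_R|^{2(p+\alpha)/(p-1)} \, d\mu \; \leq \; \frac{C}{R^{2(p+\alpha)/(p-1)}} \, \mu\!\left(\mathcal{B}_{2R}(o)\right).
\]

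Finally, under the hypothesis $\mu(\mathcal{B}_R(o)) = o(R^{2p/(p-1)})$, for every $\alpha > 0$ the right-hand side can be rewritten as $R^{-2\alpha/(p-1)} \cdot o(1)$, which tends to zero as $R \to \infty$. Passing to the limit by monotone convergence gives $\int_\mathcal{M} u^{p+\alpha}\,d\mu = 0$, and since $u \geq 0$ we conclude $u \equiv 0$. The main obstacle I anticipate is the sharp bookkeeping of constants in the Young--Hölder absorption: a naive application leaves a residual multiplicative constant on $\int u^{p+\alpha}\chi_R^q\,d\mu$ that is greater than one, so the critical point is to exploit the freedom in both the Young parameter $\epsilon$ and the auxiliary parameter $\alpha > 0$, combined with the sublinear Hölder exponent $(\alpha+1)/(p+\alpha) < 1$ on the volume term, to make the self-absorption genuinely close. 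A minor technicality is the singularity of $u^{\alpha-1}$ on the zero set of $u$, which is handled in the standard way by replacing $u$ with $u + \varepsilon$ and letting $\varepsilon \to 0^+$ at the very end.
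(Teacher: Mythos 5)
There is a genuine gap, and it is the sign of the Dirichlet integral. Testing $-\mathcal{L}u=u^{p}$ against $u^{\alpha}\chi_{R}^{\beta}$ with $\alpha>0$ yields, exactly as in your display,
\begin{equation*}
	\int u^{p+\alpha}\chi_{R}^{\beta}\,d\mu \;=\; \alpha\int u^{\alpha-1}|\nabla u|^{2}\chi_{R}^{\beta}\,d\mu \;+\; \beta\int u^{\alpha}\chi_{R}^{\beta-1}(\nabla u\cdot\nabla\chi_{R})\,d\mu\,,
\end{equation*}
with the weighted Dirichlet term on the \emph{right}-hand side carrying a \emph{positive} coefficient. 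Young's inequality on the cross term then gives the two bounds
\begin{align*}
	\int u^{p+\alpha}\chi_{R}^{\beta}\,d\mu &\leq (1+\epsilon)\,\alpha\!\int u^{\alpha-1}|\nabla u|^{2}\chi_{R}^{\beta}\,d\mu + C_{\epsilon}\!\int u^{\alpha+1}\chi_{R}^{\beta-2}|\nabla\chi_{R}|^{2}\,d\mu\,,\\
	(1-\epsilon)\,\alpha\!\int u^{\alpha-1}|\nabla u|^{2}\chi_{R}^{\beta}\,d\mu &\leq \int u^{p+\alpha}\chi_{R}^{\beta}\,d\mu + C_{\epsilon}\!\int u^{\alpha+1}\chi_{R}^{\beta-2}|\nabla\chi_{R}|^{2}\,d\mu\,,
\end{align*}
and substituting the second into the first produces $\int u^{p+\alpha}\chi_{R}^{\beta}\leq\tfrac{1+\epsilon}{1-\epsilon}\int u^{p+\alpha}\chi_{R}^{\beta}+C'_{\epsilon}\int u^{\alpha+1}\chi_{R}^{\beta-2}|\nabla\chi_{R}|^{2}$, which is vacuous because $\tfrac{1+\epsilon}{1-\epsilon}>1$ for every $\epsilon>0$. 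The sub-linear H\"older exponent $(\alpha+1)/(p+\alpha)<1$ only improves the volume term; it does nothing to the self-referential copy of $\int u^{p+\alpha}\chi_{R}^{\beta}$, and no choice of $\epsilon$ or $\alpha>0$ cures this. So the ``clean decay estimate'' you display is not actually available from this test function.

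The absorption closes only with a \emph{negative} power: testing against $u^{-\gamma}\chi_{R}^{\beta}$, $\gamma\in(0,p)$, places the Dirichlet term $\gamma\int u^{-\gamma-1}|\nabla u|^{2}\chi_{R}^{\beta}\,d\mu$ on the same side as $\int u^{p-\gamma}\chi_{R}^{\beta}\,d\mu$ and with the same sign, which is precisely what the paper obtains through the substitution $v=u^{-(p-1)/2}$ and the integration-by-parts identity of Lemma~\ref{ibp-lem}. But the resulting decay rate is $R^{-2(p-\gamma)/(p-1)}\mu(\mathcal B_{2R}(o))$, which under $\mu(\mathcal B_{R}(o))=o(R^{2p/(p-1)})$ only gives a growth bound $o(R^{2\gamma/(p-1)})$ on $\int_{\mathcal B_{R}}u^{p-\gamma}\,d\mu$ when $\gamma>0$, not vanishing. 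To reach the endpoint exponent $p$ one needs the additional interpolation the paper carries out in the proof of Lemma~\ref{int-est-lemma}~ii) at $q=\tfrac{m}{2}+1$: there the coefficient of the Dirichlet integral in \eqref{ibp-formula-1} degenerates to zero, and a Cauchy--Schwarz argument against the two sub-endpoint estimates \eqref{int-est-1-zero} and \eqref{int-est-2-zero}, with a small shift $\varepsilon>0$, yields the sharp bound $\int_{\mathcal B_{R}}e^{-f}u^{p}\leq C\,\mu(\mathcal B_{2R}(o))\,R^{-2p/(p-1)}$. Your strategy is in the right spirit (test functions, scaling-matched cutoff powers, H\"older), but both the sign of the test function and the endpoint interpolation are essential and missing.
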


	In the special case that {$(\mathcal{M},g,\mu)$ satisfies {$\mu(\mathcal{B}_R(o))=\mathcal{O}(R^2 \log R)$}}, by the above theorem it admits no such positive solutions of \eqref{leq} for any $p>1$.
	{{This is consistent with \cite[Theorem 9.7]{Grigoryan2} (see also the works cited therein), which implies that $(\mathcal{M},g,\mu)$ is \emph{parabolic} if {$\mu(\mathcal{B}_R(o))=\mathcal{O}(R^2 \log R)$.}}}
	We recall that a weighted manifold is parabolic if and only if it does not support positive entire $\mathcal{L}$-superharmonic functions.

	{The preceding Theorem \ref{thm:vol-growth-liou}, a direct consequence of integral estimates proved in Section \ref{sec:main}, is in fact a generalization of \cite[Theorem 4.5]{CFP} to weighted manifolds (for the unweighted case we also refer to \cite{GrSu} where the authors consider non-negative supersolutions of $-\Delta u= u^p$). However, in the case of weighted manifolds, Theorem 4.5 attains a special significance thanks to its application to many relevant weighted manifolds. This is discussed in the next Example 1, while the subsequent Example 2 is devoted to the construction to a \emph{non-parabolic} weighted manifold for which Theorem \ref{thm:vol-growth-liou} applies.}
		
		\medskip
		
{\bf Example 1.} Any weighted manifold satisfying
		\begin{equation}\label{strict-ric}
			\mathrm{Ric}_{\infty,d}\geq\lambda\,,\quad\mathrm{for}\;\mathrm{some}\;\;\lambda>0\,,
		\end{equation}
		has finite weighted volume -- see \cite[Corollary 4]{Mor}. Therefore, Theorem \ref{thm:vol-growth-liou} tells us that there are no positive solutions to \eqref{leq} for any $p>1$ if \eqref{strict-ric} holds. 
		
		As a particular case, if $(\mathcal{M},g,\mu)$ satisfies
		\begin{equation*}
			\mathrm{Ric}_{\infty,d}=\lambda\,,\quad\mathrm{for}\;\mathrm{some}\;\;\lambda>0\,,
		\end{equation*}
		it is a \emph{shrinking gradient Ricci soliton}. 
		Such manifolds have been intensely studied, due in part to their relation to self-similar limits of Hamilton's Ricci flow. Some explicit examples are the Gaussian space\footnote{In the following two examples, we make a small abuse of notation by considering the \emph{density} rather than the measure in the third slot of the triple $(\mathcal{M},g,\mu)$.} $(\R^d,\mathrm{Eucl}_{\R^d},e^{-\alpha|x|^2}dx)$, the weighted hyperbolic space $(\mathbb{M}^d,g_{\mathbb{M}^d},e^{-\alpha|x|^2}dx)$ (for a suitable choice of $\alpha>0$), or the warped product given in \cite[p. 1]{PRS2}.  
		Let us also point out that, differently from the unweighted case, \eqref{strict-ric} \emph{does not} imply that $(\mathcal{M},g,\mu)$ is compact, as can be seen by the above examples. 
		If the weighted manifold is compact, a Liouville theorem for non-negative solutions follows for all $p$ simply by applying the strong maximum principle.

\medskip

{\bf Example 2.}  Here, we provide an example of a \emph{non-parabolic} weighted model manifold for which Theorem \ref{thm:vol-growth-liou} applies for all $p>1$, \emph{i.e.} there exists a positive $\mathcal{L}$-superharmonic function, but there \emph{do not} exist positive solutions to \eqref{leq}. 
		For any $d\geq2$, we consider $(\R^d,\mathrm{Eucl}_{\R^d},\mu)$ with $d\mu=e^{-f}\,dx$ for any smooth radial weight $f\equiv f(r)$ satisfying
		\begin{equation}\label{def-f}
			e^{-f(r)}= C\,r^{2-d}\,\log^\beta(r)\,,\quad\mathrm{for}\;\mathrm{all}\;\;r\gg 1\,,
		\end{equation}
		with $\beta>1$ and $C>0$. Calling $\mathcal{S}(r)=C_d\,r^{d-1}\,e^{-f(r)}$ the weighted surface area of $\partial \mathcal{B}_r$, we find from \eqref{def-f} that 
		\begin{equation*}
			\int^{+\infty}\frac{dr}{\mathcal{S}(r)}<+\infty\,,
		\end{equation*}
		which implies that the model manifold is not parabolic -- see \cite[Example 9.5]{Grigoryan2} and the sources cited therein. Furthermore 
		\begin{equation*}
			\mu(\mathcal{B}_R)=C\int_0^R\,r^{d-1}\,e^{-f(r)}\,dr\leq C\,R^2\,\log^\beta(R)\,,\quad\mathrm{for}\;\mathrm{all}\;\;R\gg1\,,
		\end{equation*}
		so it follows that $\mu(\mathcal{B}_R)=o\!\left(R^{\frac{2p}{p-1}}\right)$ and Theorem \ref{thm:vol-growth-liou} applies for all $p>1$.

			\subsection{Results concerning the Lane-Emden equation under the condition $\mathrm{Ric}_{n,d}\geq0$}
			{In the following two Theorems, we take the stronger assumption $\mathrm{Ric}_{n,d}\geq0$ and obtain weighted analogues of the main results of \cite{CFP} with a virtual dimension $n\geq d$.}
			\begin{theorem}\label{thm-leq-crit}
				Let $(\mathcal{M},g,\mu)$ be a complete, connected, non-compact, boundaryless weighted Riemannian manifold of dimension $d \geq 3$. Let $\mathrm{Ric}_{n,d}\geq0$ for some $n\geq d$, and let $u\in C^3(\mathcal{M})$ be a positive solution to 
				\begin{equation}\label{leq-crit}
					-\mathcal{L}u=u^{\frac{n+2}{n-2}} \quad \mathrm{in}\;\; \mathcal{M}\, .
				\end{equation}
				If any of the following four conditions holds:
				\begin{enumerate}[i)]
					\item $n\leq 5$\,,
					\item $5<n\leq6$ and 
					\begin{equation}\label{pw-decay-cond-1}
						u\in\mathcal{O}\!\left(|x|^{-\alpha}\right)\;\;\mathrm{as}\;\;|x|\to+\infty\,,\qquad\mathrm{for}\;\;\alpha>\frac{(n-2)(n-6)}{2(n-5)}\,,
					\end{equation}
					\item $n>6$ and
					\begin{equation}\label{pw-decay-cond}
						u\in\mathcal{O}\!\left(|x|^{-\alpha}\right)\;\;\mathrm{as}\;\;|x|\to+\infty\,,\qquad\mathrm{for}\;\;\alpha>\frac{(n-2)(n-6)}{4(n-4)}\,,
					\end{equation}
					\item $u$ has finite weighted energy, namely
					$$\int_\mathcal{M}e^{-f}\,u^{\frac{2n}{n-2}}<+\infty\,,$$
				\end{enumerate}
				then $(\mathcal{M},g)$ is isometric to $(\R^d,\mathrm{Eucl}_{\R^d})$ and $\nabla f\equiv0$, i.e. $\mathcal{L}=\Delta$, and $n=d$. Furthermore,
				$u$ is
				precisely the Aubin-Talenti bubble \eqref{at-bubble}.
			\end{theorem}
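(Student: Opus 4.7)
The plan is to import the strategy of \cite{CFP} from the unweighted case, exploiting the fact that $\mathrm{Ric}_{n,d}\geq 0$ provides the full comparison package (Laplacian comparison, weighted Bishop--Gromov) with the virtual dimension $n$ playing the role of the topological dimension, as recalled in Appendix~\ref{appen:comp}. This is the reason no analogue of \eqref{dot-cond}, \eqref{lapl-comp-inf-eq}, or \eqref{bis-gro-inf} appears in the hypotheses. As a first step, I would introduce the conformal change $v:=u^{-2/(n-2)}$, derive the elliptic equation it satisfies under \eqref{leq-crit}, and set up the same P-function as in \cite{CFP}, with the topological dimension $d$ systematically replaced by $n$ and $\Delta$ by $\mathcal{L}$.

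The central computation is a weighted Bochner identity. Starting from
\[
\tfrac12\mathcal{L}|\nabla v|^2=|\nabla^2 v|^2+\langle\nabla\mathcal{L}v,\nabla v\rangle+\mathrm{Ric}_{\infty,d}(\nabla v,\nabla v),
\]
I would combine the Kato inequality $|\nabla^2 v|^2\geq(\Delta v)^2/d$ with a Young-type split of $\Delta v=\mathcal{L}v+\nabla f\cdot\nabla v$ to produce
\[
|\nabla^2 v|^2\geq\frac{(\mathcal{L}v)^2}{n}-\frac{(\nabla f\cdot\nabla v)^2}{n-d},
\]
and then absorb the negative term into the identity $\mathrm{Ric}_{\infty,d}-\mathrm{Ric}_{n,d}=\nabla f\otimes\nabla f/(n-d)$, obtaining the weighted $n$-Bochner inequality
\[
\tfrac12\mathcal{L}|\nabla v|^2\geq\frac{(\mathcal{L}v)^2}{n}+\langle\nabla\mathcal{L}v,\nabla v\rangle+\mathrm{Ric}_{n,d}(\nabla v,\nabla v).
\]
The critical exponent $(n+2)/(n-2)$ is precisely the one that makes the resulting differential inequality for the $P$-function have the correct sign structure. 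I would then integrate $\mathcal{L}P$ against suitable cutoff test functions on $\mathcal{B}_R(o)$ and apply the weighted volume and Laplacian comparisons with parameter $n$ to pass to the limit $R\to\infty$; the four cases i)--iv) correspond to four distinct ways of killing the boundary integrals, with the thresholds in \eqref{pw-decay-cond-1} and \eqref{pw-decay-cond} tuned exactly to the most singular gradient-term contribution in the regimes $5<n\leq 6$ and $n>6$ respectively.

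At this point every inequality in the chain must be saturated: in particular $\mathrm{Ric}_{n,d}(\nabla v,\nabla v)\equiv 0$, $\nabla^2 v$ is proportional to $g$ (Kato equality), and $\nabla f\cdot\nabla v=-\tfrac{n-d}{n}\mathcal{L}v$ pointwise (Young equality). The Hessian condition yields via an Obata-type rigidity that $(\mathcal{M},g)$ is isometric to $(\mathbb{R}^d,\mathrm{Eucl}_{\mathbb{R}^d})$ and that $v$ is a quadratic polynomial in the radial variable, producing the Aubin--Talenti profile \eqref{at-bubble} for $u$; substituting this explicit profile into the Young-saturation relation forces $\nabla f\cdot x$ to be a nonzero constant when $n>d$, which evaluated at the origin is impossible. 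Hence $n=d$, $\nabla f\equiv 0$, and the classification is complete. The main obstacle I anticipate is the third step: certifying in each of the four regimes that the boundary terms on $\partial\mathcal{B}_R$ vanish as $R\to\infty$, which is exactly where the sharp thresholds in \eqref{pw-decay-cond-1}--\eqref{pw-decay-cond} and the finite-energy hypothesis iv) enter.
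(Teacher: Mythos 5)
Your framework matches the paper's: you perform the same conformal change $v=u^{-2/(n-2)}$, reach the same $n$-dimensional weighted Bochner inequality (your Kato/Young split is exactly the algebra that turns the $\mathrm{Ric}_{\infty,d}$ Bochner formula into a statement about $\mathrm{Ric}_{n,d}$, and it reproduces the paper's decomposition \eqref{eq-k} of $\mathsf{k}[v]$ into non-negative pieces), and you close with the same rigidity endgame — Hessian proportionality gives Tashiro/Obata rigidity and the quadratic profile for $v$, while the Young-saturation relation $\nabla f\cdot\nabla v=-\tfrac{n-d}{n}\mathcal{L}v$ forces $\nabla f\cdot(x-x_0)$ to equal a nonzero constant, which is impossible at $x=x_0$. (Your observation that the contradiction can be read off directly at $x_0$ is actually a touch cleaner than the paper's, which computes $\nabla f$ explicitly and invokes its singularity.)

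The genuine gap is that you have outlined the easy part and left the hard part as a promissory note. The entire technical core of the paper's proof of this theorem — roughly four pages — is devoted to showing $\nabla\mathtt{P}\equiv 0$, and this is precisely the step you describe only as ``integrate $\mathcal{L}P$ against suitable cutoff test functions \dots the four cases correspond to four distinct ways of killing the boundary integrals.'' In fact the paper does not integrate $\mathcal{L}\mathtt{P}$ directly; it works from the pointwise divergence inequality \eqref{fund-pw-ineq-eq}, and the case analysis requires substantial auxiliary machinery that you do not mention: the integral energy estimates of Lemma \ref{int-est-lemma} (which themselves come from the integration-by-parts identity \eqref{ibp-formula-1} with carefully tuned exponents $q$), the $\mathcal{L}$-superharmonic lower bound of Lemma \ref{super-harm-lem} giving $v\lesssim\mathrm{dist}^2$, the Cheng--Yau gradient estimate of Lemma \ref{yau-lem}, a delicate bootstrap/absorption scheme for the borderline case $n=5$, and, in cases ii)--iii), the interplay between the pointwise decay hypothesis and H\"older exponents that produces exactly the thresholds in \eqref{pw-decay-cond-1} and \eqref{pw-decay-cond}. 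Without these, the claim that ``every inequality in the chain must be saturated'' has no content, since you never establish that $\mathtt{P}$ is constant. Saying the thresholds are ``tuned exactly to the most singular gradient-term contribution'' identifies where the work lives but does not do it; to turn this proposal into a proof you would need to reproduce (or substitute for) the full chain of estimates from Proposition \ref{int-key-pro} through the conclusion of each case.
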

			Notice that from Theorem \ref{thm-leq-crit} $\emph{ii})$ we obtain a classification and rigidity result when $5<n<6$ and $u$ is \emph{bounded}. 
			
			Next, we state a Liouville-type theorem, which was proven independently in \cite[Corollary 1.10]{Lu} using the Bernstein-Yau method.
			\begin{theorem}\label{thm-subcrit}
				Let $(\mathcal{M},g,\mu)$ be a complete, connected, non-compact, boundaryless weighted Riemannian manifold of dimension $d \geq 2$. Let $\mathrm{Ric}_{n,d}\geq0$ for some $n\geq d$, and let $u\in C^3(\mathcal{M})$ be a non-negative solution to
				\begin{equation}\label{subcrit-p-cond}
					-\mathcal{L}u=u^p\quad\mathrm{in}\;\;\mathcal{M}\,,\quad\mathrm{for}\;\mathrm{some}\;\;1<p<p_S(n)\,.
				\end{equation}
				Then $u\equiv0$.
			\end{theorem}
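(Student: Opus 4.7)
The plan is to adapt the classical Gidas-Spruck integral argument to the weighted setting, with the \emph{virtual dimension} $n$ from the hypothesis $\mathrm{Ric}_{n,d}\geq 0$ replacing the topological dimension in every analytic step. The two underlying ingredients are the refined weighted Bochner inequality with effective dimension $n$ and the Bishop-Gromov-type volume comparison $\mu(\mathcal{B}_R(o))\leq C R^n$ that follows from $\mathrm{Ric}_{n,d}\geq 0$ and is recalled in Appendix \ref{appen:comp}.

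First, I would derive from $\mathrm{Ric}_{n,d}\geq 0$ the pointwise Bochner-type inequality
\begin{equation*}
	\tfrac{1}{2}\,\mathcal{L}|\nabla u|^2 \,\geq\, \tfrac{(\mathcal{L}u)^2}{n} + \langle \nabla u,\nabla\mathcal{L}u\rangle\,,
\end{equation*}
which incorporates the refined Kato inequality absorbing the extra term $(\nabla f\cdot\nabla u)^2/(n-d)$ into $|\nabla^2 u|^2$. Substituting $\mathcal{L}u=-u^p$ and $\nabla\mathcal{L}u=-p\,u^{p-1}\nabla u$ converts this into the scalar inequality
\begin{equation*}
	\tfrac{1}{2}\,\mathcal{L}|\nabla u|^2 + p\,u^{p-1}|\nabla u|^2 \,\geq\, \tfrac{u^{2p}}{n}\quad\mathrm{in}\;\;\mathcal{M}\,.
\end{equation*}

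Next, I would multiply this inequality by $u^{\alpha}\varphi^{2m}$ for a suitable exponent $\alpha$ and a standard cutoff $\varphi$ supported in $\mathcal{B}_{2R}(o)$, equal to $1$ on $\mathcal{B}_R(o)$, with $|\nabla\varphi|\leq C/R$, and integrate against $d\mu$. Using the self-adjointness of $\mathcal{L}$ with respect to the weighted measure, together with the auxiliary energy identity obtained by testing $-\mathcal{L}u=u^p$ against $u^{\alpha+1}\varphi^{2m}$, a repeated use of Young's inequality to absorb the gradient terms would yield, for $\alpha$ in an appropriate range, a schematic estimate of the form
\begin{equation*}
	\int_{\mathcal{M}} u^{2p+\alpha}\,\varphi^{2m}\,d\mu \,\leq\, \frac{C}{R^2}\int_{\mathcal{B}_{2R}(o)\setminus \mathcal{B}_R(o)} u^{2p+\alpha}\,\varphi^{2m-2}\,d\mu\,,
\end{equation*}
followed by a Hölder step. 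Combined with $\mu(\mathcal{B}_{2R}(o))\leq C R^n$, the right-hand side is bounded by $C R^{-\sigma(p,\alpha,n)}$ times a sub-unit power of the left-hand side, where $\sigma(p,\alpha,n)>0$ precisely when $\alpha$ lies in an interval that is non-empty if and only if $p<p_S(n)$.

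Letting $R\to\infty$ then forces $u\equiv 0$. The main technical obstacle is the careful selection of the exponents $\alpha$ and $m$ so as to simultaneously justify every integration by parts, make all Young's inequality absorptions admissible, and guarantee $\sigma>0$ throughout the entire sub-critical range; this is the classical Gidas-Spruck-Bidaut-V\'{e}ron-V\'{e}ron bookkeeping, with the only genuine modifications being the replacement of $d$ by $n$ in the Bochner inequality and the volume growth. The weight $e^{-f}$ poses no additional difficulty because $\mathcal{L}$ is symmetric with respect to $d\mu$, so the integration-by-parts manipulations are formally identical to the unweighted case.
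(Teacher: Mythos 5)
Your proposal takes a genuinely different route from the paper. You work directly with $u$ and the Bochner inequality for $|\nabla u|^2$, invoking the refined curvature-dimension identity
\begin{equation*}
	\tfrac{1}{2}\,\mathcal{L}|\nabla u|^2 \,\geq\, \tfrac{(\mathcal{L}u)^2}{n} + \langle \nabla u,\nabla\mathcal{L}u\rangle\,,
\end{equation*}
(which is correct: it is the $\Gamma_2$-form of $\mathrm{Ric}_{n,d}\geq0$, obtained by completing the square between $(\Delta u)^2/d$ and $(\nabla f\cdot\nabla u)^2/(n-d)$) and then multiplying by $u^\alpha\varphi^{2m}$. The paper instead first transforms $u\mapsto v=u^{-(p-1)/2}$ so that $\mathcal{L}v=\mathtt{P}$ with $\mathtt{P}=\tfrac{1}{v}\bigl(\tfrac{m}{2}|\nabla v|^2+c_m\bigr)$, then exploits the \emph{divergence identity} \eqref{ana-geo-eq}, i.e.\ $m\,v^{1-m}\,\mathsf{k}[v]=\mathrm{div}_f(v^{2-m}\nabla\mathtt{P})$, and runs the integral estimate on the pointwise inequality \eqref{fund-pw-ineq-eq}. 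These are different decompositions: yours is the classical Gidas--Spruck/Bernstein bookkeeping with virtual dimension $n$ and is in fact close in spirit to the Bernstein--Yau argument of \cite{Lu}, which the paper explicitly cites as an independent proof of this theorem. What the paper's $v$-transformation buys is structural clarity: the quantity $\mathtt{P}$ satisfies a clean divergence-form identity, the sign of $\mathsf{k}[v]$ is governed transparently by $m\geq n$, and the choice $t=m-\tfrac{3}{2}$ leads directly to the closing exponent $R^{-2m+n+2}\to0$ exactly when $m>(n+2)/2$, i.e.\ $p<p_S(n)$.

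One substantive caution. The step you summarize as ``the classical Gidas--Spruck--Bidaut-V\'eron--V\'eron bookkeeping'' is the entire content of the proof and cannot be waved through: on non-compact manifolds, where the Kelvin transform is unavailable, the pure integral variant of Gidas--Spruck has historically closed only up to a strictly smaller exponent than $p_S(n)$, and reaching the full subcritical threshold is precisely the point that required the $v$-transformation (or, alternatively, Lu's Bernstein maximum-principle argument). Moreover, the paper's proof relies on a further ingredient you omit: the pointwise growth bound $v(x)\leq C\,\mathrm{dist}(x,o)^{2(n-2)/(m-2)}$ from Lemma \ref{super-harm-lem} (a Laplacian-comparison $\mathcal{L}$-superharmonicity estimate), which is inserted into the cutoff term before the final $R\to\infty$ passage. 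Without something playing this role, it is unclear how your $\sigma(p,\alpha,n)$ stays positive across the whole range $1<p<p_S(n)$; you should either produce the explicit choice of $\alpha,m$ (and verify it spans the full subcritical interval) or incorporate a comparable pointwise bound on $u$. Finally, you should add the (easy but necessary) preliminary reduction to $u>0$ via the strong maximum principle for $\mathcal{L}$-superharmonic functions, since your argument raises $u$ to arbitrary real powers.
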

			
			\subsection{On the Liouville equation}
			\begin{theorem}\label{thm:exp-class}
				Let $(\mathcal{M},g,\mu)$ be a complete, connected, non-compact, boundaryless weighted Riemannian surface\footnote{This is a weighted Riemannian manifold of dimension $d=2$.}. Let $\mathcal{G}(r)$ be any positive, non-decreasing function satisfying
				\begin{equation}\label{g-cond}
					\int_{R^*}^\infty\frac{dt}{t\,\mathcal{G}(t)}=+\infty\,,\quad\mathrm{for}\;\mathrm{some}\;\;R^*>0\,.
				\end{equation}
				Let $\mathrm{Ric}_{\infty,2}\geq0$ and let $u\in C^3(\mathcal{M})$ be a solution to 
				\begin{equation*}
					-\mathcal{L}u=e^u \quad \mathrm{in}\;\; \mathcal{M}\, ,
				\end{equation*}
				where $f\in C^2(\mathcal{M})$ additionally satisfies
				\eqref{dot-cond} with respect to $u$,
				and also assume that \eqref{bis-gro-inf} holds for some $o\in\mathcal{M}$. If $u$ satisfies
				\begin{equation}\label{pw-liou-cond}
					u(x)\geq-4\,\log\left[r(x,o)\,\mathcal{G}^{\frac{1}{2}}(r(x,o))\right]\,, \quad \mathrm{for}\;\mathrm{all}\;\;x\in \mathcal{M}\setminus \mathcal{B}_{R^*}(o)\,,
				\end{equation}
				then $(\mathcal{M},g)$ is isometric to $(\R^2,\mathrm{Eucl}_{\R^2})$ and $\nabla f\equiv0$, i.e. $\mathcal{L}=\Delta$. Furthermore, $u$ is
				precisely \eqref{log-bubble}.
			\end{theorem}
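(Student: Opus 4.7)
The plan is to follow the blueprint already developed in this paper for the critical Lane-Emden case (Theorem \ref{thm-infty-crit}), adapting the integral-Bochner machinery to the exponential nonlinearity in dimension two. As a first step, I would introduce an auxiliary quantity tailored to the log-bubble \eqref{log-bubble}; a convenient choice is
\begin{equation*}
P \, := \, |\nabla u|^2 + 4\,e^u\,,
\end{equation*}
since on the model solution in $\R^2$, $P$ factors nicely through $e^{u/2}$. Using the weighted Bochner identity for $|\nabla u|^2$ together with $\mathrm{Ric}_{\infty,2}\geq 0$, the refined Kato inequality, and the equation $-\mathcal{L}u=e^u$, I would derive a differential inequality of the form $\mathcal{L}P\geq\mathcal{Q}(u,\nabla u,\nabla^2 u)$, where $\mathcal{Q}$ is a non-negative quadratic expression that vanishes identically precisely on \eqref{log-bubble}. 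The crucial role of the hypothesis \eqref{dot-cond} is to absorb the extra contribution $-\nabla f\cdot\nabla|\nabla u|^2$ coming from the weight, exactly as in the proof of Theorem \ref{thm-infty-crit}.

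The second step is the integral argument. Multiplying the Bochner-type inequality by a suitable test function of the form $\eta_R^2\,\Phi(P)$ and integrating against $d\mu=e^{-f}\,d\nu$, standard integration by parts yields
\begin{equation*}
\int_{\mathcal{M}}\eta_R^2\,\mathcal{Q}\,d\mu \, \leq \, C\int_{\mathcal{M}}|\nabla\eta_R|^2\,\Psi(u,|\nabla u|)\,d\mu\,,
\end{equation*}
for some controllable $\Psi$. Here the cut-off $\eta_R$ must be chosen in the Karp-Grigor'yan spirit adapted to the function $\mathcal{G}$: on the annulus $\{R^*\leq r(x,o)\leq R\}$ one takes $\eta_R$ so that $|\nabla\eta_R|^2\lesssim 1/(r^2\,\mathcal{G}(r))$ on a logarithmic annulus whose existence is guaranteed by the divergence condition \eqref{g-cond}.

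The third step is to show that the right-hand side tends to zero as $R\to+\infty$. This is where the pointwise lower bound \eqref{pw-liou-cond} enters: it gives the matching upper bound $e^{-u(x)/2}\leq r(x,o)\,\mathcal{G}^{1/2}(r(x,o))$ outside $\mathcal{B}_{R^*}(o)$, which, combined with a Caccioppoli-type control of $|\nabla u|$ by $e^{u/2}$ coming from \eqref{dot-cond} and the equation, produces exactly the decay needed to match the logarithmic growth of $1/|\nabla\eta_R|^2$. The weighted volume bound \eqref{bis-gro-inf} then ensures that the integral $\int|\nabla\eta_R|^2\,\Psi\,d\mu$ vanishes in the limit, by the very choice of $\mathcal{G}$ in \eqref{g-cond}.

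Consequently $\mathcal{Q}\equiv 0$, which forces the pointwise identities
\begin{equation*}
\nabla^2 u \, = \, \tfrac{1}{2}(\Delta u)\,g\,, \qquad \mathrm{Ric}_{\infty,2}(\nabla u,\nabla u) \, = \, 0\,, \qquad \nabla f\cdot\nabla u \, = \, 0\,.
\end{equation*}
The first identity says that $\nabla u$ is a conformal gradient field on $(\mathcal{M},g)$; combined with the second and Tashiro's theorem (as used in \cite{CFP}), the surface is isometric to $(\R^2,\mathrm{Eucl}_{\R^2})$ and $u$ is radial about some point $x_0$. The third identity, together with the equation $-\Delta u=e^u$ restricted to radial solutions, forces $\nabla f\equiv 0$ and identifies $u$ uniquely as the log-bubble \eqref{log-bubble}.

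The main obstacle will be the third step. Unlike the Lane-Emden case, where polynomial decay enters directly, here one must delicately balance the logarithmic cut-off afforded by \eqref{g-cond} against the exponential weight $e^{-f}$ and the integrability of $e^u\,|\nabla u|^2$ provided only indirectly by the lower bound \eqref{pw-liou-cond}. The sign condition \eqref{dot-cond} is essential to keep the weighted Bochner identity free of uncontrolled terms, while \eqref{bis-gro-inf} is what ultimately converts the pointwise bound \eqref{pw-liou-cond} into an $L^1$ bound strong enough to kill the boundary contribution.
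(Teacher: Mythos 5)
The most concrete issue is your choice of auxiliary function. You set
$P := |\nabla u|^2 + 4\,e^u$, but on the log-bubble \eqref{log-bubble} in $\R^2$, where $e^u = (a+br^2)^{-2}$ and $|\nabla u|^2 = 16 b^2 r^2/(a+br^2)^2$, one computes
\begin{equation*}
P \, = \, \frac{16\,b^2\,r^2 + 4}{(a + b\,r^2)^2}\,,
\end{equation*}
which is \emph{not} constant (the numerator is quadratic, the denominator quartic), so there is no divergence-structure identity for which $P$ is the distinguished conserved quantity, and no quadratic remainder $\mathcal{Q}$ can simultaneously vanish precisely on \eqref{log-bubble} and feed into a clean $\mathcal{L}P\geq\mathcal{Q}\geq 0$. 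The paper's $P$-function is instead built from $v = e^{-u/2}$, namely $\mathtt{P} = \tfrac{1}{v}\bigl(|\nabla v|^2 + \tfrac{1}{2}\bigr) = \tfrac{1}{4}\,e^{-u/2}\bigl(|\nabla u|^2 + 2\,e^u\bigr)$, which is identically equal to $4b$ on the log-bubble; the extra factor $e^{-u/2}$ and the different constant are exactly what make the pointwise identity \eqref{ana-geo-eq} close up and the $\mathcal{L}$-subharmonicity \eqref{pos-lapl} hold. Without that factor your second step fails at the Bochner level.

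The rest of your plan is broadly consistent in spirit with the paper's, but the paper takes a shorter route at the integral stage: rather than an explicit Karp--Grigor'yan cut-off argument on annuli, it first derives the $L^1$ bound $\int_{\mathcal{B}_R(o)}e^{-f}\,\mathtt{P}\leq C\,\mathcal{G}(2R)\,R^2$ from the integration-by-parts identity \eqref{ibp-formula-1} with $q=1$, $m=2$, together with \eqref{pw-liou-cond} and the volume bound \eqref{bis-gro-inf}, and then shows $\mathcal{L}\,\mathtt{P}^t\geq 0$ for $t\in[\tfrac12,1)$ using $W_{\!f}[v]\geq 0$ (this is precisely where \eqref{dot-cond} enters). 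It then invokes the general Liouville theorem of Pigola--Rigoli--Setti (\cite[Theorem 1.1]{PRS}, together with \cite[Proposition 1.3]{RS}) for non-negative $\mathcal{L}$-subharmonic functions under \eqref{g-cond} to conclude $\mathtt{P}\equiv c>0$. Your logarithmic-cut-off idea is essentially what sits inside the proof of that Liouville theorem, so doing it by hand is feasible, but it is not the route the paper takes, and it is more delicate than you suggest because you must track powers of $\mathtt{P}$ carefully (cf.\ Proposition \ref{int-key-pro}).

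Finally, your rigidity step is slightly misstated: the paper applies Tashiro's theorem to $\nabla^2 v = \tfrac{c}{d}\,g$ for the transformed function $v$ (after using $\nabla f\cdot\nabla v\equiv 0$ to turn $\mathcal{L}v\equiv c$ into $\Delta v\equiv c$), not to the traceless-Hessian condition on $u$. The condition $\nabla^2 u = \tfrac{1}{2}(\Delta u)\,g$ alone does not give you Tashiro's hypothesis, since you have no a priori control of $\Delta u$; you need the specific $v$-formulation to obtain a Hessian proportional to the metric with a constant coefficient.
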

			The condition \eqref{g-cond} allows increasing functions behaving like $\mathcal{G}(r)\approx\log(r)$ or $\mathcal{G}(r)\approx\log(r)\log(\log r)$ at infinity.
			
			In accordance with the previous results, one may expect to be able to remove the assumption \eqref{dot-cond} by letting $\mathrm{Ric}_{n,2}\geq0$ for some $n>2$, but, as we explain in Section \ref{subsec:liou-proof} this is not possible using our method of proof.
			Also, it is explained in \cite[pp. 12--13]{CL} (see also \cite[Theorem 1.2]{CFP}) that the constant $4$ in \eqref{pw-liou-cond} is not improvable, even when $\nabla f\equiv0$.

			\subsection{Organization of the paper}
			The rest of the paper is organized as follows. In Section \ref{sec:bochner} we state some key pointwise equations and inequalities derived by applying Bochner's inequality to certain auxiliary functions that are defined in terms of the given solution $u$ of \eqref{leq} or \eqref{liou-eq}. In Section \ref{sec:sub-sub-proofs} we prove our main non-existence result for non-negative $\mathrm{Ric}_{n,d}$ curvature, Theorem \ref{thm-subcrit}. Section \ref{sec:main} is mostly dedicated to the proof of Theorem \ref{thm-leq-crit}. In order to prove this classification result, it is necessary to first derive some integral energy estimates -- Lemma \ref{int-est-lemma} -- from which Theorem \ref{thm:vol-growth-liou} trivially follows. 
			In Section \ref{sec:infty}, we consider $\mathrm{Ric}_{\infty,d}$ curvature, proving Theorem \ref{thm-infty-crit}, Theorem \ref{thm-infty}, and Theorem \ref{thm:exp-class}. Finally in Section \ref{sec:bubble}, we prove Theorem \ref{constr-sol-theo} by constructing some positive solutions of \eqref{leq} in weighted manifolds satisfying $\mathrm{Ric}_{\infty,d}\geq0$.
			
			\section{Some pointwise relations in third derivatives}\label{sec:bochner}
			Let $u$ be a positive solution of \eqref{leq}, and let
			$v=u^{-\frac{p-1}{2}}$. Notice that $v$ satisfies
			\begin{equation}\label{eq-for-v-0}
				\mathcal{L}v=\frac{1}{v}\left(\frac{m}{2}|\nabla v|^2+\frac{2}{m-2}\right)\,,
			\end{equation}
			where $m>2$ is defined by
			\begin{equation}\label{def-m}
				\frac{m}{2}=\frac{p+1}{p-1}
			\end{equation} 
			is a parameter that quantifies how much $p$ differs from the critical value $p_S(n)$ defined in \eqref{sobolev-critical}. Indeed, when $p\nearrow p_S(n)$, it follows $m\searrow n$. On the other hand, if $u$ is a solution of \eqref{liou-eq} in dimension $d=2$, and we set $v$ to satisfy 
			$u=-2\log v$, it holds
			\begin{equation}\label{eq-for-v-l}
				\mathcal{L}v=\frac{1}{v}\left(\left|\nabla v\right|^2+\frac{1}{2}\right)\,,
			\end{equation} 
			and we consider this as the case $d=m=2$ (formally).
			So, from now on, we combine \eqref{eq-for-v-0} and \eqref{eq-for-v-l} and simply consider positive solutions $v$ of 
			\begin{equation}\label{eq-for-v}
				\mathcal{L}v=\frac{1}{v}\left(\frac{m}{2}|\nabla v|^2+c_m\right)=:\mathtt{P}\,,
			\end{equation}
			for
			\begin{equation}\label{def-cm}
				c_m=\begin{cases}
					\frac{2}{m-2}\quad&\mathrm{if}\;\;m>2\,,\\
					\frac{1}{2}&\mathrm{if}\;\;m=2\,.
				\end{cases}
			\end{equation}
			We use the symbol $\mathtt{P}$ to refer to the quantity in \eqref{eq-for-v}.

			By the maximum principle for non-negative $\mathcal{L}$-superharmonic functions (which holds in our context of manifolds with non-negative Bakry-\'{E}mery Ricci curvature) either $u=0$ or $u>0$ everywhere for solutions of $-\mathcal{L}u=u^p$. Therefore, from now on, we assume that $u>0$ everywhere, so it follows by definition that $v>0$ and $\mathtt{P}>0$ everywhere (by \eqref{eq-for-v}). For the Liouville equation \eqref{liou-eq}, on the other hand, $u$ may change signs, but $v=(e^u)^{-\frac{1}{2}}$ is always strictly positive, and the same holds for $\mathtt{P}$. 
			
			If one suitably takes a further derivative of \eqref{eq-for-v} and applies Bochner's identity\footnote{Note that $\norm{\cdot}_{\mathrm{H.S.}}$ is the Hilbert-Schmidt norm of a matrix.}
			\begin{equation*}
				\tfrac{1}{2}\,\Delta\left|\nabla v\right|^2=\norm{\nabla^2 v}^2_{\mathrm{H.S.}}+\nabla v\cdot\nabla\Delta v +\mathrm{Ric}(\nabla v,\nabla v)\,,
			\end{equation*}
			or the $\mathcal{L}$-generalized form 
			\begin{equation}\label{BE-Bochner}
				\tfrac{1}{2}\,\mathcal{L}\left|\nabla v\right|^2=\norm{\nabla^2 v}^2_{\mathrm{H.S.}}+\nabla v\cdot\nabla\mathcal{L} v +\mathrm{Ric}_{\infty,d}(\nabla v,\nabla v)\,,
			\end{equation}
			the $\mathcal{L}$-weighted geometry of $\mathcal{M}$ appears in the form of the Bakry-\'{E}mery Ricci curvature tensor \eqref{def-BER}. Indeed, combining \cite[Lemma A.1]{CP}, where all the calculations can be found, with \eqref{eq-for-v} yields the fundamental pointwise equality
			\begin{equation}\label{ana-geo-eq}
				mv^{1-m}\,\mathsf{k}[v]=\mathrm{div}_{\!f}(v^{2-m}\,\nabla\mathtt{P})\,,
			\end{equation}
			where 
			\begin{equation}\label{def-k}
				\mathsf{k}[v]=\norm{\nabla^2v}_{\mathrm{H.S.}}^2-\frac{(\mathcal{L}v)^2}{m}+\mathrm{Ric}_{\infty, d}\,(\nabla v,\nabla v)\,,
			\end{equation}
			and 
			$$
			\mathrm{div}_{\!f}(\;\;)=e^{f}\mathrm{div}(e^{-f}\;\;) \,.
			$$ 
			In the case $\mathrm{Ric}_{n,d}\geq0$ for $n\geq d$, it can also be checked  that
			\begin{equation}\label{eq-k}
				\mathsf{k}[v]=\norm{\nabla^2v-\frac{\Delta v}{d}g\,}^2_{\mathrm{H.S.}}+\frac{m-n}{mn}\left(\mathcal{L}v\right)^2+\frac{n-d}{nd}\left(\mathcal{L}v+\frac{n}{n-d}\nabla f\cdot\nabla v\right)^{\!2}+\mathrm{Ric}_{n,d}\!\left(\nabla v,\nabla v\right),
			\end{equation}
			immediately implying that $k\geq0$ as long as $m\geq n$, that is, that $p\leq\frac{n+2}{n-2}$. For the case $n=\infty$, \eqref{eq-k} will be interpreted in Section \ref{sec:infty}.
			
			The overarching strategy to prove our classification results, \emph{i.e.} Theorem \ref{thm-infty-crit}, Theorem \ref{thm-leq-crit},  and Theorem \ref{thm:exp-class}, is to first prove that $\mathtt{P}\equiv c$ for some constant $c>0$, and then deduce some consequences from \eqref{ana-geo-eq} and \eqref{eq-k}. Indeed, if $\nabla \mathtt{P}\equiv 0$, then it follows from \eqref{ana-geo-eq} that $\mathsf{k}[v]=0$. Therefore, the four (non-negative) summands of \eqref{eq-k} must all be zero, which in turn implies rigidity of the manifold $\mathcal{M}$ and the solution $u$, and triviality of the weight $f$. This is an adaptation of the \emph{P-function method} employed in \cite{CFP}. However, in the case of Theorem \ref{thm-infty-crit} and Theorem \ref{thm:exp-class}, some modifications to the strategy must necessarily be made, due to the inherent lack of rigidity of weighted manifolds satisfying the weaker condition $\mathrm{Ric}_{\infty,d}\geq0$. 
			
			For the rest of the paper, equation \eqref{ana-geo-eq} is applied via the following pointwise inequality.
			\begin{lemma}
				Let $v\in C^3(\mathcal{M})$ be a positive solution to \eqref{eq-for-v}. Then for any $t\in\R$, it holds
				\begin{equation}\label{fund-pw-ineq-eq}
					\left(t-\tfrac{1}{2}\right)\mathtt{P}^{t-2}v^{2-m}|\nabla\mathtt{P}|^2+m\,\mathtt{P}^{t-1}v^{1-m}\,W_{\!f}[v]\leq \emph{div}_{\!f}\!\left(\mathtt{P}^{t-1}\,v^{2-m}\,\nabla\mathtt{P}\right),
				\end{equation}
				where
				\begin{equation}\label{def-w}
					W_{\!f}[v]=\begin{cases}
						\frac{m-d}{m^2}(\Delta v)^2+\mathrm{Ric}(\nabla v,\nabla v)&\mathrm{for}\;\;n=d\,,\\
						\tfrac{1}{m-d}\left(\tfrac{m-d}{m}\mathcal{L}v+\nabla f\cdot\nabla v\right)^2+\tfrac{m-n}{(n-d)(m-d)}\left|\nabla f\cdot\nabla v\right|^2+\mathrm{Ric}_{n,d}\,(\nabla v,\nabla v)&\mathrm{for}\;\;n>d\,,\\
						\tfrac{m-d}{m^2}(\mathcal{L}v)^2+\tfrac{2}{m}\mathcal{L}v\,\nabla f\cdot\nabla v+\mathrm{Ric}_{\infty,d}(\nabla v,\nabla v)&\mathrm{for}\;\;n=\infty\,.
					\end{cases}
				\end{equation}
			\end{lemma}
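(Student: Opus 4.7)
The plan is to start from the integrated identity \eqref{ana-geo-eq} and reduce \eqref{fund-pw-ineq-eq} to a single, $t$-independent pointwise inequality. Multiplying both sides of \eqref{ana-geo-eq} by $\mathtt{P}^{t-1}$ and applying the product rule for $\mathrm{div}_{\!f}$, one gets
\[
\mathrm{div}_{\!f}\!\left(\mathtt{P}^{t-1}v^{2-m}\nabla\mathtt{P}\right) \, = \, m\,\mathtt{P}^{t-1}v^{1-m}\mathsf{k}[v] \, + \, (t-1)\,\mathtt{P}^{t-2}v^{2-m}|\nabla\mathtt{P}|^2,
\]
so subtracting $(t-\tfrac{1}{2})\mathtt{P}^{t-2}v^{2-m}|\nabla\mathtt{P}|^2$ from both sides and dividing by the positive factor $m\,\mathtt{P}^{t-1}v^{1-m}$, the desired inequality \eqref{fund-pw-ineq-eq} becomes equivalent to the $t$-free pointwise bound
\[
\mathsf{k}[v] \, - \, W_{\!f}[v] \, \geq \, \frac{v\,|\nabla\mathtt{P}|^2}{2m\,\mathtt{P}}.
\]

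The key algebraic input is obtained by differentiating \eqref{eq-for-v}, which yields the identity $v\,\nabla\mathtt{P} = m\,\nabla^2 v\cdot\nabla v - \mathtt{P}\,\nabla v$. Decomposing the Hessian as $\nabla^2 v = H + \tfrac{\Delta v}{d}g$ with $H$ the trace-free part, and using $\mathcal{L}v = \Delta v - \nabla f\cdot\nabla v$ to rewrite
\[
\frac{m\,\Delta v}{d} \, - \, \mathtt{P} \, = \, \frac{m}{d}\,B, \qquad B \, := \, \frac{m-d}{m}\,\mathcal{L}v \, + \, \nabla f\cdot\nabla v,
\]
this takes the compact form $v\,\nabla\mathtt{P} = m\,\widetilde{H}\cdot\nabla v$, where $\widetilde{H} := H + \tfrac{B}{d}\,g$ is a symmetric matrix with trace $B$ and trace-free part $H$. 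In particular $\|\widetilde{H}\|^2_{\mathrm{H.S.}} = \|H\|^2_{\mathrm{H.S.}} + \tfrac{B^2}{d}$.

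The heart of the proof is then the observation that, in each of the three regimes $n = d$, $d < n < \infty$, and $n = \infty$, the coefficients in the case-by-case definition \eqref{def-w} of $W_{\!f}[v]$ are tuned precisely so that, upon expanding \eqref{def-k} (or \eqref{eq-k} in the finite-$n$ cases), the $\mathrm{Ric}_{n,d}$ contributions of $\mathsf{k}[v]$ and $W_{\!f}[v]$ cancel and the resulting scalar expressions in $\mathcal{L}v$ and $\nabla f\cdot\nabla v$ combine into the clean identity
\[
\mathsf{k}[v] \, - \, W_{\!f}[v] \, = \, \|H\|^2_{\mathrm{H.S.}} \, + \, \frac{B^2}{d} \, = \, \|\widetilde{H}\|^2_{\mathrm{H.S.}}.
\]
To close the argument, the Cauchy-Schwarz inequality $|\widetilde{H}\cdot\nabla v|^2 \leq \|\widetilde{H}\|^2_{\mathrm{H.S.}}\,|\nabla v|^2$, combined with the consequence $v\,\mathtt{P} = \tfrac{m}{2}|\nabla v|^2 + c_m$ of \eqref{eq-for-v}, gives
\[
\frac{v\,|\nabla\mathtt{P}|^2}{2m\,\mathtt{P}} \, = \, \frac{m\,|\widetilde{H}\cdot\nabla v|^2}{m|\nabla v|^2 + 2c_m} \, \leq \, \|\widetilde{H}\|^2_{\mathrm{H.S.}}\cdot\frac{m|\nabla v|^2}{m|\nabla v|^2 + 2c_m} \, \leq \, \|\widetilde{H}\|^2_{\mathrm{H.S.}},
\]
where the last step uses $c_m > 0$. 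At points where $\nabla v = 0$ the differentiated equation forces $\nabla\mathtt{P} = 0$, so the inequality is trivial there.

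The main technical obstacle is the case-by-case algebraic verification of the identity $\mathsf{k}[v] - W_{\!f}[v] = \|\widetilde{H}\|^2_{\mathrm{H.S.}}$ in the intermediate regime $d < n < \infty$: one must expand the cross-term $\tfrac{n-d}{nd}\bigl(\mathcal{L}v + \tfrac{n}{n-d}\nabla f\cdot\nabla v\bigr)^{\!2}$ from \eqref{eq-k}, subtract $\tfrac{1}{m-d}\bigl(\tfrac{m-d}{m}\mathcal{L}v + \nabla f\cdot\nabla v\bigr)^{\!2}$ from \eqref{def-w}, and check that the coefficients of $(\mathcal{L}v)^2$, $\mathcal{L}v\,\nabla f\cdot\nabla v$, and $(\nabla f\cdot\nabla v)^2$ recombine into $\tfrac{1}{d}B^2$, with the residual $\tfrac{m-n}{(n-d)(m-d)}(\nabla f\cdot\nabla v)^2$ precisely absorbing the $-\tfrac{\nabla f\otimes\nabla f}{n-d}$ correction from the definition \eqref{def-BER} of $\mathrm{Ric}_{n,d}$. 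The cases $n = d$ and $n = \infty$ then arise as the natural limits $n\downarrow d$ (with $\nabla f \equiv 0$) and $n\uparrow\infty$ of this same computation.
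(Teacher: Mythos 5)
Your argument is correct and, beneath the notational choices, follows the same route as the paper's: it rests on the same identity $v\,\nabla\mathtt{P}=m\bigl(\nabla^2v-\tfrac{\mathcal{L}v}{m}g\bigr)\nabla v$ (your $\widetilde{H}$ \emph{is} precisely the matrix $\nabla^2 v-\tfrac{\mathcal{L}v}{m}g$), the same Cauchy--Schwarz step, the same bound $|\nabla v|^2\leq\tfrac{2}{m}\,v\,\mathtt{P}$, and the same algebraic verification that $\mathsf{k}[v]-W_{\!f}[v]=\lVert\nabla^2v-\tfrac{\mathcal{L}v}{m}g\rVert^2_{\mathrm{H.S.}}$, assembled at the end with \eqref{ana-geo-eq} and the product rule for $\mathrm{div}_{\!f}$. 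The only difference is expository: you first divide out the $\mathtt{P}^{t-1}v^{2-m}$ factors to reduce to a $t$-free pointwise inequality and organize the algebra via the trace-free Hessian decomposition, whereas the paper proves the pointwise estimate first and multiplies through by $\mathtt{P}^{t-2}v^{2-m}$ at the end.
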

			
			\begin{proof}
				Let us begin by noticing that, by the definition of $\mathtt{P}$ in \eqref{eq-for-v}, we have
				\begin{equation*}
					\nabla\mathtt{P}=-\frac{1}{v^2}\,v\,\mathtt{P}\,\nabla v+\frac{m}{v}(\nabla^2v)\nabla v=\frac{m}{v}\left(\nabla^2v-\frac{\mathcal{L}v}{m}g\right)\nabla v\,.
				\end{equation*}
				By Cauchy-Schwartz inequality, it follows
				\begin{equation*}
					\left|\nabla\mathtt{P}\right|^2\leq\frac{m^2}{v^2}\norm{\nabla^2v-\frac{\mathcal{L}v}{m}g\,}_{\mathrm{H.S.}}^2\!\left|\nabla v\right|^2.
				\end{equation*}
				By \eqref{eq-for-v}, we see that
				\begin{equation*}
					|\nabla v|^2\leq\frac{2}{m}\mathtt{P}\,v\,,
				\end{equation*}
				so the above inequality becomes
				\begin{equation}\label{cs-grad-p}
					\left|\nabla\mathtt{P}\right|^2\leq\frac{2m}{v}\norm{\nabla^2v-\frac{\mathcal{L}v}{m}g\,}_{\mathrm{H.S.}}^2\!\mathtt{P}\,.
				\end{equation}
				In order to relate the squared norm appearing here to \eqref{ana-geo-eq} via \eqref{def-k}, we expand the norm and apply the definition of $\mathcal{L}$ in \eqref{def-L} to find that 
				\begin{equation*}
					\begin{aligned}
						\norm{\nabla^2v-\frac{\mathcal{L}v}{m}g\,}_{\mathrm{H.S.}}^2\!&=\norm{\nabla^2v}_{\mathrm{H.S.}}^2-\frac{2}{m}\mathcal{L}v\,\Delta v+\frac{d}{m^2}(\mathcal{L}v)^2\\
						&=\mathsf{k}[v]+\frac{(\mathcal{L}v)^2}{m}-\mathrm{Ric}_{\infty,d}(\nabla v,\nabla v)-\frac{2}{m}\mathcal{L}v\,\Delta v+\frac{d}{m^2}(\mathcal{L}v)^2\\
						&=\mathsf{k}[v]-\frac{m-d}{m^2}(\mathcal{L}v)^2-\frac{2}{m}\mathcal{L}v\,(\nabla f\cdot\nabla v)-\frac{1}{n-d}\left|\nabla f\cdot\nabla v\right|^2-\mathrm{Ric}_{n,d}(\nabla v,\nabla v)\\
						&=\mathsf{k}[v]-W_{\!f}[v]\,.
					\end{aligned}  
				\end{equation*}
				The last equality follows by completing the square and recalling the definition of $W_{\!f}$ in \eqref{def-w}.
				Substituting this back into \eqref{cs-grad-p} yields
				\begin{equation*}
					\left|\nabla\mathtt{P}\right|^2\leq\frac{2m}{v}\left(\mathsf{k}[v]-W_{\!f}[v]\right)\,\mathtt{P}.
				\end{equation*}
				Now we multiply the above inequality by $\mathtt{P}^{t-2}\,v^{2-m}$ and apply \eqref{ana-geo-eq} to the right-hand side to obtain
				\begin{equation*}
					\mathtt{P}^{t-2}\,v^{2-m}\left|\nabla\mathtt{P}\right|^2+2m\,\mathtt{P}^{t-1}\,v^{1-m}\,W_{\!f}[v]\leq 2\,\mathtt{P}^{t-1}\,\mathrm{div}_{\!f}(v^{2-m}\,\nabla\mathtt{P})\,.
				\end{equation*}
				The thesis follows by noticing that
				\begin{equation*}
					\mathrm{div}_{\!f}(\mathtt{P}^{t-1}\,v^{2-m}\,\nabla\mathtt{P})=\mathtt{P}^{t-1}\,\mathrm{div}_{\!f}(v^{2-m}\,\nabla\mathtt{P})+(t-1)\,\mathtt{P}^{t-2}\,v^{2-m}|\nabla \mathtt{P}|^2\,,
				\end{equation*}
				and rearranging terms.
			\end{proof}
			
			\section{A Liouville-type theorem for the subcritical Lane-Emden equation}\label{sec:sub-sub-proofs}
			From now on, we frequently use the notation $\mathcal{B}_R(o)\subset\mathcal{M}$ to denote the geodesic ball of radius $R>0$ centered at $o\in\mathcal{M}$ and 
			\begin{equation*}
				\mathcal{A}_R(o)=\mathcal{B}_{2R}(o)\setminus\overline{\mathcal{B}_{R}(o)}\,,
			\end{equation*}
			to denote the annulus.
			In Appendix \ref{app:yau}, we state and prove some standard results about cutoff functions, which are used in the sequel to localize computations inside $\mathcal{B}_R$ and $\mathcal{A}_R$.
			
			\begin{proof}[Proof of Theorem \ref{thm-subcrit}]
				Let us begin by fixing some notation. By the hypotheses $p<p_S(n)$ in \eqref{subcrit-p-cond}, we have $m>n$. Also, let us assume from now on that $n>d$, which is equivalent to $\nabla f\neq0$. Indeed, if $n=d$, we reduce to the case of \cite[Theorem 1.4]{CFP}.
				
				We aim to show that $\mathtt{P}\equiv0$, which is a contradiction with $v>0$ by \eqref{eq-for-v}. Finally, let us recall that, by definition, $\mathcal{L}v=\mathtt{P}$.
				
				Now we begin our calculations by studying \eqref{fund-pw-ineq-eq}--\eqref{def-w}. Since by assumption, $\mathrm{Ric}_{n,d}\geq0$, it follows that both sides of \eqref{fund-pw-ineq-eq} are non-negative, provided that
				\begin{equation}\label{t-cond}
					t > \frac{1}{2}\,.
				\end{equation} 
				Let $\varphi_R$ be a cutoff function as in Lemma \ref{cutoff-lem} (centered at any point $o\in\mathcal{M}$ that will not play a role in the proof). Now, we multiply all terms of \eqref{fund-pw-ineq-eq} by $e^{-f}\,\varphi_R^\theta$ for some $\theta>1$ to be chosen later and integrate by parts on the right-hand side. For the sake of simplicity, we denote $\varphi=\varphi_R$ until the end of the proof. We obtain
				\begin{equation*}
					\begin{aligned}
						\left(t-\tfrac{1}{2}\right)\!\int_{\mathcal{M}} e^{-f}\,\mathtt{P}^{t-2}v^{2-m}\,|\nabla\mathtt{P}|^2\,\varphi^\theta&+m\!\int_{\mathcal{M}} e^{-f}\,\mathtt{P}^{t-1}v^{1-m}\,W_{\!f}[v]\,\varphi^\theta\\
						&\leq -\theta\int_{\mathcal{M}} e^{-f}\,\mathtt{P}^{t-1}\,v^{2-m}\,(\nabla\mathtt{P}\cdot\nabla\varphi)\,\varphi^{\theta-1}\,.    
					\end{aligned}
				\end{equation*}
				Applying Young's inequality to the right-hand side in the form
				\begin{equation*}
					\left(\mathtt{P}^{\frac{t-2}{2}}|\nabla\mathtt{P}|\,\varphi^{\frac{\theta}{2}}\right)\left(\theta\,\mathtt{P}^{\frac{t}{2}}\,|\nabla \varphi|\,\varphi^{\frac{\theta-2}{2}}\right)\leq \varepsilon\,\mathtt{P}^{t-2}|\nabla\mathtt{P}|^2\,\varphi^{\theta}+\tfrac{\theta^2}{\varepsilon}\,\mathtt{P}^{t}\,|\nabla \varphi|^2\,\varphi^{\theta-2}\,,
				\end{equation*}
				gives
				\begin{equation*}
					\begin{aligned}
						\left(t-\tfrac{1}{2}-\varepsilon\right)\!\int_{\mathcal{M}} e^{-f}\,\mathtt{P}^{t-2}\,v^{2-m}\,|\nabla\mathtt{P}|^2\,\varphi^\theta&+m\!\int_{\mathcal{M}} e^{-f}\,\mathtt{P}^{t-1}\,v^{1-m}\,W_{\!f}[v]\,\varphi^\theta\\
						&\leq \frac{\theta^2}{\varepsilon}\!\int_{\mathcal{M}} e^{-f}\,\mathtt{P}^{t}\,v^{2-m}\,|\nabla \varphi|^2\,\varphi^{\theta-2}\,.    
					\end{aligned}
				\end{equation*}
				By \eqref{def-w} and the non-negativity of $\mathrm{Ric}_{n,d}$, we may rewrite the above inequality in the following way: 
				\begin{equation}\label{young-1}
					\begin{aligned}
						&\left(t-\tfrac{1}{2}-\varepsilon\right)\!\int_{\mathcal{M}} e^{-f}\,\mathtt{P}^{t-2}\,v^{2-m}\,|\nabla\mathtt{P}|^2\,\varphi^\theta\\&+\frac{m}{m-d}\int_{\mathcal{M}} e^{-f}\,\mathtt{P}^{t-1}\,v^{1-m}\left(\left(\tfrac{m-d}{m}\mathcal{L}v+\nabla f\cdot\nabla v\right)^2+\tfrac{m-n}{n-d}\left|\nabla f\cdot\nabla v\right|^2\right)\varphi^\theta\\
						&\leq \frac{\theta^2}{\varepsilon}\!\int_{\mathcal{M}} e^{-f}\,\mathtt{P}^{t}\,v^{2-m}\,|\nabla \varphi|^2\,\varphi^{\theta-2}\,.    
					\end{aligned}
				\end{equation}
				In preparation for another application of Young's inequality, this time to the term $\mathtt{P}^t$ on the right-hand side, we notice that 
				\begin{equation*}
					\begin{aligned}
						\mathtt{P}^2=\left[\tfrac{m}{m-d}\left(\tfrac{m-d}{m}\mathtt{P}+\nabla f\cdot\nabla v-\nabla f\cdot\nabla v\right)\right]^2
						\leq\tfrac{2m^2}{(m-d)^2}\left(\left(\tfrac{m-d}{m}\mathtt{P}+\nabla f\cdot\nabla v\right)^2+\left|\nabla f\cdot\nabla v\right|^2\right)\,,
					\end{aligned}
				\end{equation*}
				so, indeed, using the above inequality and Young's inequality with exponents $\frac{t+1}{t}$ and $t+1$, we find
				\begin{equation*}
					\begin{aligned}
						\mathtt{P}^t\,&v^{2-m}\,|\nabla \varphi|^2\,\varphi^{\theta-2}= \left(\mathtt{P}^{(t-1)\frac{t}{t+1}}\,\mathtt{P}^{\frac{2t}{t+1}}\,v^{(1-m)\frac{t}{t+1}}\,\varphi^{\frac{\theta t}{t+1}}\right)\!\left(v^{2-m-(1-m)\frac{t}{t+1}}\left|\nabla\varphi\right|^2\,\varphi^{\theta-2-\frac{\theta t}{t+1}}\right)\\
						&\leq \left(\left(\tfrac{2m^2}{(m-d)^2}\right)^{\frac{t}{t+1}}\mathtt{P}^{(t-1)\frac{t}{t+1}}\left(\left(\tfrac{m-d}{m}\,\mathtt{P}+\nabla f\cdot\nabla v\right)^2+\left|\nabla f\cdot\nabla v\right|^2\right)^{\frac{t}{t+1}}\,v^{(1-m)\frac{t}{t+1}}\,\varphi^{\frac{\theta t}{t+1}}\right)\\
						&\quad\times\!\left(v^{2-m-(1-m)\frac{t}{t+1}}\left|\nabla\varphi\right|^2\,\varphi^{\theta-2-\frac{\theta t}{t+1}}\right)\\
						&\leq \frac{\varepsilon^2}{\theta^2}\left(\mathtt{P}^{t-1}\left(\left(\tfrac{m-d}{m}\,\mathtt{P}+\nabla f\cdot\nabla v\right)^2+\left|\nabla f\cdot\nabla v\right|^2\right)\,v^{1-m}\,\varphi^{\theta}\right)\\
						&\quad + \mathtt{K}\,v^{2-m+t}\,\left|\nabla\varphi\right|^{2(t+1)}\,\varphi^{\theta-2(t+1)}\,,
					\end{aligned}
				\end{equation*}
				where $\varepsilon>0$ is arbitrary and $\mathtt{K}>0$ is a constant depending only on $d$, $m$, $t$, $\theta$, and $\varepsilon$. If we take 
				\begin{equation*}
					\varepsilon<t-\frac{1}{2}\,\land\,\frac{m}{m-d}\,\land\,\frac{m}{m-d}\frac{m-n}{n-d}\,,
				\end{equation*}
				and plug the above inequality into \eqref{young-1}, we obtain
				\begin{equation}\label{young-2}
					\begin{aligned}
						\left(t-\tfrac{1}{2}-\varepsilon\right)\!\int_{\mathcal{M}} e^{-f}\,\mathtt{P}^{t-2}\,v^{2-m}\,|\nabla\mathtt{P}|^2\,\varphi^\theta&+\left(\tfrac{m}{m-d}-\varepsilon\right)\int_{\mathcal{M}} e^{-f}\,\mathtt{P}^{t-1}\,v^{1-m}\left(\tfrac{m-d}{m}\mathcal{L}v+\nabla f\cdot\nabla v\right)^2\,\varphi^\theta\\
						&+\left(\tfrac{m}{m-d}\tfrac{m-n}{n-d}-\varepsilon\right)\int_{\mathcal{M}} e^{-f}\,\mathtt{P}^{t-1}\left|\nabla f\cdot\nabla v\right|^2\,v^{1-m}\,\varphi^\theta\\
						&\leq \mathtt{K}\int_{\mathcal{M}} e^{-f}\,v^{2-m+t}\,\left|\nabla\varphi\right|^{2(t+1)}\,\varphi^{\theta-2(t+1)}\,.    
					\end{aligned}
				\end{equation}
				Let us choose $t=m-\frac{3}{2}$, (which in view of the hypothesis that $m>n>2$ is a valid choice in the sense of \eqref{t-cond}), and $\theta$ sufficiently large, to obtain from the right-hand side of \eqref{young-2}
				\begin{equation}\label{young-3}
					\begin{aligned}
						\int_{\mathcal{M}} e^{-f}\,v^{\frac{1}{2}}\,\left|\nabla\varphi\right|^{2m-1}&\leq \norm{v}_{L^\infty(\mathcal{A}_R(o))}^{\frac{1}{2}}    \int_{\mathcal{M}} e^{-f}\,\left|\nabla\varphi\right|^{2m-1}\\
						&\leq C\,R\,R^{1-2m}\,R^n\,,
					\end{aligned}
				\end{equation}
				where $C>0$ depends only on $d$, $n$, the behavior of $v$ in $\mathcal{B}_1(o)$, and the geometry of $\mathcal{M}$ in $\mathcal{B}_1(o)$. In the previous estimate, we have used \eqref{def-cutoff} and \eqref{bis-gro}, as well as the estimate
				\begin{equation}\label{v-harm-appl-0}
					v(x)\leq C\,\mathrm{dist}(x,o)^{2\frac{n-2}{m-2}}\quad\mathrm{in}\;\;\mathcal{M}\setminus \mathcal{B}_1(o)\,,
				\end{equation}
				for a constant $C>0$ depending only on $n$ and the behavior of $v$ in $\mathcal{B}_1(o)$, which follows from Lemma \ref{super-harm-lem}. In fact, in the application of \eqref{v-harm-appl-0}, we have taken in particular the estimate with power $2$ (\emph{i.e.} $v^{\frac{1}{2}}\leq CR$), which is justified by $m>n$, and as we will see now is sufficient to conclude.
				
				Combining \eqref{young-2} and \eqref{young-3} yields
				\begin{equation}\label{young-4}
					\begin{aligned}
						\left(t-\tfrac{1}{2}-\varepsilon\right)\!\int_{\mathcal{M}} e^{-f}\,\mathtt{P}^{t-2}\,v^{2-m}\,|\nabla\mathtt{P}|^2\,\varphi^\theta&+\left(\tfrac{m}{m-d}-\varepsilon\right)\int_{\mathcal{M}} e^{-f}\,\mathtt{P}^{t-1}\,v^{1-m}\left(\tfrac{m-d}{m}\mathcal{L}v+\nabla f\cdot\nabla v\right)^2\,\varphi^\theta\\
						&+\left(\tfrac{m}{m-d}\tfrac{m-n}{n-d}-\varepsilon\right)\int_{\mathcal{M}} e^{-f}\,\mathtt{P}^{t-1}\left|\nabla f\cdot\nabla v\right|^2\,v^{1-m}\,\varphi^\theta\\
						&\leq C\,R^{-2m+n+2}\,,    
					\end{aligned}
				\end{equation}
				for all $R\geq1$, where $C>0$ depends only on $d$, $n$, $m$, the behavior of $v$ in $\mathcal{B}_1(o)$, and the geometry of $\mathcal{M}$ in $\mathcal{B}_1(o)$. We may take $R\to+\infty$ in \eqref{young-4} to conclude that the left-hand side is equal to $0$ if 
				\begin{equation*}
					m>\frac{n+2}{2}\,,
				\end{equation*}
				which clearly follows from $m>n$ and $n>2$.
				
				In this way, we deduce that \emph{both} of the following statements hold:
				\begin{gather}
					\mathrm{either}\;\;\mathtt{P}\equiv0\quad\mathrm{or}\quad \tfrac{m-d}{m}\,\mathtt{P}+\nabla f\cdot\nabla v=0\,\label{g1}\\
					\mathrm{either}\;\;\mathtt{P}\equiv0\quad\mathrm{or}\quad \nabla f\cdot\nabla v=0\,\label{g2}.
				\end{gather}
				It is easy to see (in particular since $m>d$) that the combination of \eqref{g1}--\eqref{g2} implies that $\mathtt{P}\equiv 0$, which, as we mentioned before, is a contradiction with the assumption that $v>0$, implying $v=0$. This in turn is a contradiction with the assumption that $u>0$ everywhere, which by the maximum principle implies $u=0$.
			\end{proof}
			
			\section{Classification results for the critical Lane-Emden equation}\label{sec:main}
			\subsection{Some preliminary results}\label{sec:prelim:main}
			\begin{lemma}[An integration-by-parts formula]\label{ibp-lem}
				Let ($\mathcal{M},g,\mu$) be a complete
				weighted Riemannian manifold of dimension $d\geq2$, and let $v\in C^3(\mathcal{M})$ be a positive solution to \eqref{eq-for-v}. Then, for any $q\in \R$ and $\psi\in C^{0,1}_c(\mathcal{M})$ it holds 
				\begin{equation}\label{ibp-formula-1}
					\left(\tfrac{m}{2}+1-q\right)\!\int_{\mathcal{M}}e^{-f}\,v^{-q}\,\left|\nabla v\right|^2\,\psi+c_m\int_{\mathcal{M}} e^{-f}\,v^{-q}\,\psi
					=-\int_{\mathcal{M}} e^{-f}\,v^{1-q}\,(\nabla v\cdot\nabla \psi)\,,
				\end{equation}
				where $c_m$ is defined in \eqref{def-cm}.
			\end{lemma}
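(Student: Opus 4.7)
The plan is to derive the identity by testing the PDE \eqref{eq-for-v} against the function $v^{1-q}\psi$ weighted by $e^{-f}$, then integrating by parts using the divergence form of the weighted Laplacian. Since $v>0$ and $v\in C^3(\mathcal M)$, the powers $v^{-q}$ and $v^{1-q}$ are smooth and locally bounded, and since $\psi$ is Lipschitz with compact support, all integrals in sight are finite and the manipulations below are fully justified.

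First I recall from \eqref{def-L} that $e^{-f}\mathcal{L}v=\mathrm{div}(e^{-f}\nabla v)$. Multiplying \eqref{eq-for-v}, i.e.\ $\mathcal{L}v=\tfrac{1}{v}\bigl(\tfrac{m}{2}|\nabla v|^2+c_m\bigr)$, by $v^{1-q}\psi\,e^{-f}$ and integrating over $\mathcal M$ yields
\begin{equation*}
\int_{\mathcal{M}}v^{1-q}\psi\,\mathrm{div}\bigl(e^{-f}\nabla v\bigr)
=\frac{m}{2}\int_{\mathcal M}e^{-f}\,v^{-q}\,|\nabla v|^2\,\psi
+c_m\int_{\mathcal M}e^{-f}\,v^{-q}\,\psi.
\end{equation*}
For the left-hand side I apply the divergence theorem (compactness of $\mathrm{supp}\,\psi$ eliminates boundary terms) and the product rule $\nabla(v^{1-q}\psi)=(1-q)v^{-q}\psi\,\nabla v+v^{1-q}\nabla\psi$, obtaining
\begin{equation*}
-\,(1-q)\!\int_{\mathcal M}e^{-f}\,v^{-q}\,|\nabla v|^2\,\psi
\;-\int_{\mathcal M}e^{-f}\,v^{1-q}\,(\nabla v\cdot\nabla\psi).
\end{equation*}

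Equating the two expressions and collecting the $|\nabla v|^2\psi$ terms produces the coefficient $-(1-q)-\tfrac{m}{2}=-\bigl(\tfrac{m}{2}+1-q\bigr)$, so a simple rearrangement gives exactly \eqref{ibp-formula-1}. There is no real obstacle here: the only thing to be careful about is the compact support of $\psi$, which makes the integration-by-parts step rigorous without any decay assumption on $v$, $f$, or the geometry at infinity.
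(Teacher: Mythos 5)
Your proof is correct and matches the paper's approach exactly: the paper's one-line proof says to multiply \eqref{eq-for-v} by $e^{-f}v^{1-q}\psi$ and integrate by parts once, which is precisely what you carried out. The algebra checks out, and your remark about the compact support of $\psi$ justifying the integration by parts is a fair point to make explicit.
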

			\begin{proof}
				It suffices to multiply \eqref{eq-for-v} by $e^{-f}\,v^{1-q}\,\psi$ and integrate one time by parts.
			\end{proof}
			
			\begin{corollary}[Corollary to Lemma \ref{ibp-lem}]\label{ibp-cor}
				Let ($\mathcal{M},g,\mu$) be a complete weighted
				Riemannian manifold of dimension $d\geq2$, and let $v\in C^3(\mathcal{M})$ be a positive solution to \eqref{eq-for-v}. Then for any $q,\ell\in\R$ and $\psi\in C^{0,1}_c(\mathcal{M})$ it holds
				\begin{equation}\label{ibp-formula-2}
					\begin{aligned}
						\left(\tfrac{m}{2}+1-q\right)\!\int_{\mathcal{M}}e^{-f}\,\mathtt{P}^\ell\,v^{-q}\,\left|\nabla v\right|^2\,\psi&+c_m\int_{\mathcal{M}} e^{-f}\,\mathtt{P}^\ell\,v^{-q}\,\psi\\
						&=-\int_{\mathcal{M}} e^{-f}\,\mathtt{P}^\ell\,v^{1-q}\,(\nabla v\cdot\nabla \psi)-\ell\!\int_{\mathcal{M}} e^{-f}\,\mathtt{P}^{\ell-1}\,v^{1-q}\,(\nabla v\cdot \nabla \mathtt{P})\,\psi\,,
					\end{aligned}
				\end{equation}
				where $c_m$ is defined in \eqref{def-cm}.
			\end{corollary}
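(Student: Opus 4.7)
The plan is simply to apply Lemma \ref{ibp-lem} with the modified test function $\mathtt{P}^\ell\,\psi$ in place of $\psi$. The left-hand side of \eqref{ibp-formula-1} then immediately produces the left-hand side of \eqref{ibp-formula-2}, while the right-hand side, after applying the product rule to $\nabla(\mathtt{P}^\ell\,\psi) = \mathtt{P}^\ell\,\nabla\psi + \ell\,\mathtt{P}^{\ell-1}\,\psi\,\nabla\mathtt{P}$, splits into the two terms on the right-hand side of \eqref{ibp-formula-2}.

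The only point that requires justification is that $\mathtt{P}^\ell\,\psi$ is indeed an admissible test function, \emph{i.e.} that it lies in $C^{0,1}_c(\mathcal{M})$. This is straightforward: since $v\in C^3(\mathcal{M})$, the definition \eqref{eq-for-v} makes $\mathtt{P}\in C^1(\mathcal{M})$, and by the discussion following \eqref{eq-for-v} we have $\mathtt{P}>0$ everywhere. In particular, $\mathtt{P}$ is continuous and bounded away from both $0$ and $+\infty$ on the compact set $\mathrm{supp}(\psi)$, so $\mathtt{P}^\ell$ is $C^1$ (hence locally Lipschitz) on a neighborhood of $\mathrm{supp}(\psi)$ for every $\ell\in\R$. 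Therefore $\mathtt{P}^\ell\,\psi\in C^{0,1}_c(\mathcal{M})$, and Lemma \ref{ibp-lem} applies. There is no analytical obstacle; the proof is essentially one line of computation once the admissibility is noted.
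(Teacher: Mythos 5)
Your proof is correct and coincides exactly with the paper's, which simply substitutes $\psi\mapsto\mathtt{P}^\ell\,\psi$ into \eqref{ibp-formula-1}; the only addition is your (correct, and welcome) remark that $\mathtt{P}^\ell\,\psi\in C^{0,1}_c(\mathcal{M})$ because $\mathtt{P}\in C^1$ is bounded away from $0$ on $\mathrm{supp}(\psi)$.
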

			\begin{proof}
				Take $\psi\mapsto\mathtt{P}^\ell\,\psi$ in \eqref{ibp-formula-1}
			\end{proof}
		
			\begin{lemma}[Integral estimates]\label{int-est-lemma}
				Let $(\mathcal{M},g,\mu)$ be a complete weighted
				Riemannian manifold of dimension $d\geq2$. 
				Let $v\in C^3(\mathcal{M})$ be a positive solution to \eqref{eq-for-v}. Then the following estimates hold for all $o\in\mathcal{M}$ and all $R> 0$:
				\begin{enumerate}[i)]
					\item If $2\leq q< \frac{m}{2}+1$, then
					\begin{equation}\label{int-est-1-zero}
						\int_{\mathcal{B}_R(o)}e^{-f}\,v^{-q}\,\left|\nabla v\right|^2+\int_{\mathcal{B}_R(o)}e^{-f}\,v^{-q}\leq C\,\mu(\mathcal{B}_{2R}(o))\,R^{-q}\,.
					\end{equation}
					\item If $0\leq q\leq \frac{m}{2}+1$\, then 
					\begin{equation}\label{int-est-2-zero}
						\int_{\mathcal{B}_R(o)}e^{-f}\,v^{-q}\,\leq C\,\mu(\mathcal{B}_{2R}(o))\,R^{-q}\,,
					\end{equation}
				\end{enumerate}
				where $\mu(\Omega)=\int_\Omega d\mu=\int_{\Omega}e^{-f}\,dx$.
				The constant $C>0$ depends only on $d$, $m$, and $q$.
			\end{lemma}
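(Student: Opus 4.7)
My approach is to test the integration-by-parts identity of Lemma \ref{ibp-lem} against $\psi = \varphi^\theta$, where $\varphi \in C^{0,1}_c(\mathcal{M})$ is a standard Lipschitz cutoff satisfying $\varphi \equiv 1$ on $\mathcal{B}_R(o)$, $\mathrm{supp}\,\varphi \subset \mathcal{B}_{2R}(o)$, and $|\nabla\varphi| \leq C/R$, with $\theta > 2$ taken sufficiently large. Applying Young's inequality to the cross term on the right, in the form $v^{1-q}|\nabla v||\nabla\varphi|\varphi^{\theta-1} \leq \varepsilon\,v^{-q}|\nabla v|^2 \varphi^\theta + C_\varepsilon\,v^{2-q}|\nabla\varphi|^2 \varphi^{\theta-2}$, and absorbing yields the master estimate
\begin{equation*}
\left(\tfrac{m}{2}+1-q-\varepsilon\right)\!\int_{\mathcal{M}} e^{-f}v^{-q}|\nabla v|^2\varphi^\theta + c_m\!\int_{\mathcal{M}} e^{-f}v^{-q}\varphi^\theta \leq C_\varepsilon\!\int_{\mathcal{M}} e^{-f}v^{2-q}|\nabla\varphi|^2\varphi^{\theta-2},
\end{equation*}
valid for any $q < \tfrac{m}{2}+1$ once $\varepsilon$ is small. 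Throughout I will focus on the Lane-Emden regime $m > 2$.

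The base case and bootstrap then proceed as follows. At $q = 2$ the right-hand side of the master estimate reduces to $C\!\int e^{-f}|\nabla\varphi|^2 \leq CR^{-2}\mu(\mathcal{B}_{2R}(o))$, directly yielding both i) and ii) at $q = 2$. Combined with the trivial ii) at $q = 0$ (where $v^{-q} \equiv 1$), Hölder's inequality between these two exponents extends ii) to all $q \in [0, 2]$. I then iterate: assuming ii) is known at some $q_0 \in [0, \tfrac{m}{2}-1)$, plugging this bound into the right-hand side of the master estimate at $q = q_0 + 2$ gives a control of order $\mu(\mathcal{B}_{2R}(o))\,R^{-(q_0+2)}$, establishing i) and hence ii) at $q_0 + 2$. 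Stepping through the range in increments of two and filling intermediate exponents by Hölder interpolation covers $q \in [2, \tfrac{m}{2}+1)$ for i) and $q \in [0, \tfrac{m}{2}+1)$ for ii).

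The main obstacle is the endpoint $q = \tfrac{m}{2}+1$ in ii), where the coefficient of the $|\nabla v|^2$ term vanishes so that Young absorption breaks down. For this I go back to the IBP identity \eqref{ibp-formula-1} at $q = \tfrac{m}{2}+1$, which becomes the clean equality
\begin{equation*}
c_m\!\int_{\mathcal{M}} e^{-f}v^{-(\frac{m}{2}+1)}\varphi^\theta = -\theta\!\int_{\mathcal{M}} e^{-f}v^{-m/2}\,\nabla v \cdot \nabla\varphi\,\varphi^{\theta-1},
\end{equation*}
and estimate the right-hand side by Cauchy-Schwarz with the splitting $v^{-m/2}|\nabla v||\nabla\varphi| = (v^{-a}|\nabla v|)(v^{-(m/2-a)}|\nabla\varphi|)$, where $a$ is chosen so that $\int v^{-2a}|\nabla v|^2$ falls within the range of i) and $\int v^{-(m-2a)}|\nabla\varphi|^2$ within the range of ii). Taking $a = m/4$ when $m \geq 4$ (so that $2a = m-2a = m/2$), and $a = 1$ when $m \in (2, 4)$ (so that $2a = 2$ and $m-2a = m-2$), both factors are available: invoking the already established i) and ii) produces bounds of the form $C\mu(\mathcal{B}_{2R}(o))R^{-2a}$ and $C\mu(\mathcal{B}_{2R}(o))R^{-(m-2a+2)}$, whose geometric mean is precisely $C\mu(\mathcal{B}_{2R}(o))R^{-(\frac{m}{2}+1)}$, completing the proof.
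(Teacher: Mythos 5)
Your proof is correct in substance, but it takes a genuinely longer route for part i) than the paper does, and the overlap is only at the endpoint. The paper never bootstraps: after the first Young absorption (exactly your ``master estimate''), it applies Young's inequality a \emph{second} time to the remaining term, in the form
\begin{equation*}
\tfrac{1}{\varepsilon}\,v^{2-q}\,|\nabla\varphi|^2\,\varphi^{\theta-2}
\leq \varepsilon\,v^{-q}\,\varphi^\theta + \tfrac{1}{\varepsilon^{q-1}}\,|\nabla\varphi|^q\,\varphi^{\theta-q}
\qquad\text{(exponents } \tfrac{q}{2},\ \tfrac{q}{q-2}\text{),}
\end{equation*}
which eliminates $v$ entirely from the right-hand side and leaves only $\int e^{-f}|\nabla\varphi|^q \leq C R^{-q}\mu(\mathcal{B}_{2R}(o))$. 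This gives part i) for every $q\in[2,\tfrac m2+1)$ in one shot, with the ball of radius exactly $2R$ and no iteration. Your bootstrap-in-steps-of-two plus H\"older interpolation reaches the same conclusion, but each step applies ii) on a ball twice as large as the current one, so after iterating you get $\mu(\mathcal{B}_{2^{k}R}(o))$ unless you pass to cutoffs supported on thinner annuli $\mathcal{B}_{(1+\sigma)R}\setminus\mathcal{B}_R$ and tune $\sigma$ to the number of steps (which in turn depends on $m$). That fix works and is standard, but it is bookkeeping the paper avoids entirely; the paper only needs the ``modified cutoff'' remark once, at the endpoint.

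At the endpoint $q=\tfrac m2+1$ your idea is the same as the paper's -- return to the untruncated IBP identity and split the cross term by Cauchy--Schwarz so that each factor falls in the already-proven range -- but you split the exponent at the symmetric point and consequently need the two-case treatment $m\ge 4$ vs.\ $m\in(2,4)$. The paper instead perturbs: it writes $v^{-m/2}|\nabla v| = v^{\frac{-m/2+1-\varepsilon}{2}}\cdot v^{\frac{-m/2-1+\varepsilon}{2}}|\nabla v|$ with $\varepsilon\in(0,\tfrac{m-2}{2}\wedge 2)$, which invokes ii) at $q=\tfrac m2-1+\varepsilon$ and i) at $q=\tfrac m2+1-\varepsilon$; this works uniformly for all $m>2$ with no case distinction. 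Both your argument and the paper's fail for the formal Liouville value $m=2$ (the range in i) is then empty, so nothing is available to feed into the endpoint splitting), and your explicit restriction to $m>2$ is therefore appropriate. Overall: correct alternative, but the direct double-Young step for i) is cleaner and is worth knowing.
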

			
			\begin{proof}
				\medskip
				\noindent{\textbf{Part i):}} Let us fix $o\in\mathcal{M}$ and $R>0$ as in the statement of the lemma, and consider \eqref{ibp-formula-1} with $\psi\mapsto\varphi_R^\theta$ for $\varphi_R$ as in Lemma \ref{cutoff-lem} and $\theta>1$ to be chosen later. 
				As before, for simplicity we continue to call $\varphi_R=\varphi$. 
				We have
				\begin{equation}\label{ibp-theta-ineq}
					\left(\tfrac{m}{2}+1-q\right)\!\int_{\mathcal{M}}e^{-f}\,v^{-q}\,\left|\nabla v\right|^2\,\varphi^\theta+c_m\int_{\mathcal{M}} e^{-f}\,v^{-q}\,\varphi^\theta
					\leq\theta\int_{\mathcal{M}} e^{-f}\,v^{1-q}\,\,\varphi^{\theta-1}\,|\nabla v|\,|\nabla \varphi|\,.
				\end{equation}
				Now, we use Young's inequality (twice) in the form
				\begin{equation*}
					\begin{aligned}
						v^{1-q}\,\,\varphi^{\theta-1}\,|\nabla v|\,|\nabla \varphi|&\leq\varepsilon\,v^{-q}\,\left|\nabla v\right|^2\,\varphi^\theta+\tfrac{1}{\varepsilon}\,v^{2-q}\,|\nabla\varphi|^2\,\varphi^{\theta-2}\\
						&\leq \varepsilon\,v^{-q}\,\left|\nabla v\right|^2\,\varphi^\theta+\varepsilon\,v^{-q}\,\varphi^\theta+\tfrac{1}{\varepsilon^{q-1}}\,|\nabla\varphi|^q\,\varphi^{\theta-q}\,,
					\end{aligned}
				\end{equation*}
				for all $\varepsilon>0$, where we have used the assumption that $q>2$ in the second inequality by the use of Young's inequality with exponents $(\frac{q}{2},\frac{q}{q-2})$. However, in the case where $q=2$, one observes that the second inequality is still trivially true.
				
				Let us plug this inequality back into \eqref{ibp-theta-ineq} to obtain
				\begin{equation}\label{pre-cutoff-ineq}
					\left(\tfrac{m}{2}+1-q-\theta\varepsilon\right)\!\int_{\mathcal{M}}e^{-f}v^{-q}\left|\nabla v\right|^2\varphi^\theta+\left(c_m-\theta\,\varepsilon\right)\int_{\mathcal{M}} e^{-f}\,v^{-q}\,\varphi^\theta
					\leq\frac{\theta}{\varepsilon^{q-1}}\int_{\mathcal{M}} e^{-f}\,|\nabla \varphi|^q\,\varphi^{\theta-q}\,.
				\end{equation}
				By taking $\theta>q$ and our assumption on $q$, we may now choose $\varepsilon>0$ small enough in terms of $m$ and $q$ so that both terms on the left-hand side of \eqref{pre-cutoff-ineq} are positive. Furthermore, by the properties \eqref{def-cutoff} of $\varphi=\varphi_R$, we have 
				\begin{equation*}
					\int_{\mathcal{B}_R(o)}e^{-f}\,v^{-q}\,\left|\nabla v\right|^2+\int_{\mathcal{B}_R(o)} e^{-f}\,v^{-q}\leq C\,R^{-q}\!\int_{\mathcal{A}_{R}(o)}e^{-f}\,,
				\end{equation*}
				where $C>0$ depends only on $d$, $m$, and $q$. The thesis trivially follows.
				
				\medskip
				\noindent{\textbf{Part ii):}}
				If $q=0$, the result follows by definition. When $2\leq q<\frac{m}{2}+1$, \eqref{int-est-2-zero} follows from \eqref{int-est-1-zero} of course. If $0<q<2$, simply by H\"{o}lder's inequality, we have
				\begin{equation*}
					\begin{aligned}
						\int_{\mathcal{B}_R(o)}e^{-f}\,v^{-q}&\leq \left(\int_{\mathcal{B}_R(o)}e^{-f}\,v^{-2}\right)^{\frac{q}{2}}\left(\int_{\mathcal{B}_R(o)}e^{-f}\right)^{\frac{2-q}{2}}\\
						&\leq C\,\left(\int_{\mathcal{B}_{2R}(o)}e^{-f}\right)^{\frac{q}{2}}\,R^{-q}\,\left(\int_{\mathcal{B}_R(o)}e^{-f}\right)^{\frac{2-q}{2}}\,,
					\end{aligned}
				\end{equation*}
				where we have used \eqref{int-est-1-zero}. The constant $C>0$ is as in the statement.
				
				Now it is only left to check that \eqref{int-est-2-zero} holds for $q=\frac{m}{2}+1$. We have, by applying \eqref{def-cutoff} to a particular case of \eqref{ibp-theta-ineq},
				\begin{equation}\label{ibp-critical}
					c_m\int_{\mathcal{B}_R(o)} e^{-f}\,v^{-\frac{m}{2}-1}
					\leq\frac{\theta}{R}\int_{\mathcal{A}_{R}(o)} e^{-f}\,v^{-\frac{m}{2}}\,|\nabla v|\,.
				\end{equation}
				Let us choose $\varepsilon\in(0,\frac{m-2}{2}\land 2)$. By H\"{o}lder's inequality followed by \eqref{int-est-2-zero} (with $0\leq q<\frac{m}{2}+1$) and \eqref{int-est-1-zero}, we have 
				\begin{equation*}
					\begin{aligned}
						\int_{\mathcal{A}_{R}(o)} e^{-f}\,v^{-\frac{m}{2}}\,|\nabla v|&\leq \left(\int_{\mathcal{A}_{R}(o)} e^{-f}\,v^{-\frac{m}{2}+1-\varepsilon}\right)^{\!\frac{1}{2}}\left(\int_{\mathcal{A}_{R}(o)} e^{-f}\,v^{-\frac{m}{2}-1+\varepsilon}\,|\nabla v|^2\right)^{\!\frac{1}{2}}\\
						&\leq C\,R^{-\frac{m}{2}}\,\mu(\mathcal{B}_{4R}(o))\,,
					\end{aligned}
				\end{equation*}
				which, combined with \eqref{ibp-critical} gives the thesis\footnote{In order to obtain the result with the ball of radius $2R$ on the right-hand side, it is only required to make some modifications of the test functions $\varphi$.}.
			\end{proof}
			
\begin{proof}[Proof of Theorem \ref{thm:vol-growth-liou}]
	Let us take $q=\frac{m}{2}+1$ in \eqref{int-est-2-zero}, and convert $v$ and $m$ back to the original variables $u$ and $p$. We obtain
	\begin{equation*}
		\int_{\mathcal{B}_{R}(o)}e^{-f}\,u^{p}\leq C\,\mu(\mathcal{B}_{2R}(o))\,R^{-\frac{2p}{p-1}}\,,
	\end{equation*}
	and the thesis follows from the hypothesis that $\mu(\mathcal{B}_{R}(o))=o\!\left(R^{\frac{2p}{p-1}}\right)$.
\end{proof}
\normalcolor
			
			The integral estimates in Lemma \ref{int-est-lemma} will most frequently be applied below in combination with a volume comparison result of the type \eqref{bis-gro}. Thus we state the following Corollary, whose proof is trivial.
			\begin{corollary}[Corollary to Lemma \ref{int-est-lemma}]\label{int-est-cor}
				Let ($\mathcal{M}$, $g$, $\mu$) be a complete weighted
				Riemannian manifold of dimension $d\geq2$ that admits a volume comparison theorem of the type \eqref{bis-gro} for some $n\geq d$, $o\in\mathcal{M}$, and all $r>0$.
				Let $v\in C^3(\mathcal{M})$ be a positive solution to \eqref{eq-for-v}. Then the following estimates hold for all  $R> r$:
				\begin{enumerate}[i)]
					\item If $2\leq q< \frac{m}{2}+1$, then
					\begin{equation}\label{int-est-1}
						\int_{\mathcal{B}_R(o)}e^{-f}\,v^{-q}\,\left|\nabla v\right|^2+\int_{\mathcal{B}_R(o)}e^{-f}\,v^{-q}\leq C\,R^{n-q}\,.
					\end{equation}
					\item If $0\leq q\leq \frac{m}{2}+1$\, then 
					\begin{equation}\label{int-est-2}
						\int_{\mathcal{B}_R(o)}e^{-f}\,v^{-q}\,\leq C\,R^{n-q}\,.
					\end{equation}
				\end{enumerate}
				The constant $C>0$ depends only on $d$, $n$, $m$, $q$, and the geometry of $\mathcal{M}$ in $\mathcal{B}_r(o)$.
			\end{corollary}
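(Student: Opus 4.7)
The plan is to simply combine Lemma \ref{int-est-lemma} with the volume comparison hypothesis, as the authors indicate the proof is trivial.

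First I would apply Lemma \ref{int-est-lemma}, whose hypotheses on $\mathcal{M}$, $v$, $q$, and $o$ are all verified by assumption here. This gives the bounds \eqref{int-est-1-zero} in case \emph{i)} and \eqref{int-est-2-zero} in case \emph{ii)}, each featuring the factor $\mu(\mathcal{B}_{2R}(o))\,R^{-q}$ on the right-hand side, with a constant $C>0$ depending only on $d$, $m$, and $q$.

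Next I would apply the volume comparison hypothesis \eqref{bis-gro} at radius $2R$. Since $R>r$ implies $2R>r$, we obtain
\begin{equation*}
\mu(\mathcal{B}_{2R}(o)) \leq C'\,(2R)^n = 2^n\,C'\,R^n\,,
\end{equation*}
where $C'>0$ depends only on $n$ and the geometry of $\mathcal{M}$ in $\mathcal{B}_r(o)$. Substituting this into the right-hand sides of \eqref{int-est-1-zero} and \eqref{int-est-2-zero} immediately yields the bounds $C\,R^{n-q}$ in \eqref{int-est-1} and \eqref{int-est-2} respectively, with the constant $C>0$ absorbing the factors from both steps and depending only on $d$, $n$, $m$, $q$, and the geometry of $\mathcal{M}$ in $\mathcal{B}_r(o)$, as stated.

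There is no real obstacle: the work is entirely contained in Lemma \ref{int-est-lemma}, and this corollary merely repackages the conclusion under the additional polynomial volume-growth assumption. The only minor point to verify is that $2R > r$ so that \eqref{bis-gro} can be invoked, which is immediate from $R > r$.
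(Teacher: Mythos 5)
Your proof is correct and is precisely the argument the paper has in mind; the paper explicitly declares the proof trivial and omits it, and your two-step combination of Lemma \ref{int-est-lemma} with \eqref{bis-gro} at radius $2R$ (noting $2R > r$) is exactly what that omitted proof would be.
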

			
			\begin{proposition}\label{int-key-pro}
				Let $(\mathcal{M},g,\mu)$ be a complete weighted
				Riemannian manifold of dimension $d\geq2$.
				Let $v\in C^3(\mathcal{M})$ be a positive solution to \eqref{eq-for-v} that satisfies $W_{\!f}[v]\geq0$ (defined in \eqref{def-w}). 
				Then it holds 
				\begin{equation}\label{int-key-pro-eq}
					\left(\frac{t}{2}-\frac{1}{4}\right)^2\!\int_{\mathcal{M}} e^{-f}\,\mathtt{P}^{t-2}\,\left|\nabla \mathtt{P}\right|^2\,v^{2-m}\,\psi^\theta\leq{\theta^2}\!\int_{\mathcal{M}} e^{-f}\,\mathtt{P}^t\,v^{2-m}\,|\nabla\psi|^2\,\psi^{\theta-2}\,,
				\end{equation}
				for all $t\geq\frac{1}{2}$, $R>0$, $\theta\geq2$, and all $\psi\in C^{0,1}_c(\mathcal{M})$.
			\end{proposition}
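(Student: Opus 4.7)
The plan is to derive \eqref{int-key-pro-eq} as a weighted Caccioppoli-type inequality, starting from the pointwise identity already established in \eqref{fund-pw-ineq-eq}. Since $W_{\!f}[v]\geq 0$ by hypothesis and the coefficient $t-\tfrac12\geq 0$ since $t\geq\tfrac12$, both terms on the left-hand side of \eqref{fund-pw-ineq-eq} are non-negative. Multiplying \eqref{fund-pw-ineq-eq} by $e^{-f}\psi^\theta$, integrating over $\mathcal{M}$, and dropping the $W_{\!f}[v]$-term, we obtain
\begin{equation*}
\bigl(t-\tfrac12\bigr)\int_\mathcal{M}e^{-f}\,\mathtt{P}^{t-2}\,v^{2-m}\,|\nabla\mathtt{P}|^2\,\psi^\theta\leq \int_\mathcal{M}e^{-f}\,\psi^\theta\,\emph{div}_{\!f}\bigl(\mathtt{P}^{t-1}v^{2-m}\nabla\mathtt{P}\bigr).
\end{equation*}

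Next, I would use the identity $e^{-f}\,\mathrm{div}_{\!f}(X)=\mathrm{div}(e^{-f}X)$ together with the compact support of $\psi$ to integrate by parts without boundary contributions, transforming the right-hand side into
\begin{equation*}
-\theta\int_\mathcal{M}e^{-f}\,\mathtt{P}^{t-1}\,v^{2-m}\,\psi^{\theta-1}\,(\nabla\mathtt{P}\cdot\nabla\psi) \;\leq\; \theta\int_\mathcal{M}e^{-f}\,\mathtt{P}^{t-1}\,v^{2-m}\,\psi^{\theta-1}\,|\nabla\mathtt{P}|\,|\nabla\psi|.
\end{equation*}
Here the condition $\theta\geq 2$ ensures that $\psi^{\theta-1}$ is Lipschitz, so the integration by parts is justified for $\psi\in C^{0,1}_c(\mathcal{M})$.

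The main (and only) technical step is to choose the right splitting in Young's inequality. I would write the integrand on the right as the product
\begin{equation*}
\bigl(\mathtt{P}^{(t-2)/2}\,v^{(2-m)/2}\,|\nabla\mathtt{P}|\,\psi^{\theta/2}\bigr)\cdot \bigl(\mathtt{P}^{t/2}\,v^{(2-m)/2}\,|\nabla\psi|\,\psi^{(\theta-2)/2}\bigr),
\end{equation*}
and apply $ab\leq \varepsilon a^2+\tfrac{1}{4\varepsilon}b^2$. Absorbing into the left-hand side yields
\begin{equation*}
\bigl(t-\tfrac12-\theta\varepsilon\bigr)\int_\mathcal{M}e^{-f}\,\mathtt{P}^{t-2}\,v^{2-m}\,|\nabla\mathtt{P}|^2\,\psi^\theta \;\leq\; \frac{\theta}{4\varepsilon}\int_\mathcal{M}e^{-f}\,\mathtt{P}^{t}\,v^{2-m}\,|\nabla\psi|^2\,\psi^{\theta-2}.
\end{equation*}
The optimal choice $\varepsilon=(t-\tfrac12)/(2\theta)$ makes the coefficient on the left equal to $(t-\tfrac12)/2$ and the coefficient on the right equal to $\theta^2/(2(t-\tfrac12))$; multiplying through by $(t-\tfrac12)/2$ delivers the inequality
\begin{equation*}
\Bigl(\tfrac{t}{2}-\tfrac14\Bigr)^{\!2}\!\int_\mathcal{M}e^{-f}\,\mathtt{P}^{t-2}\,v^{2-m}\,|\nabla\mathtt{P}|^2\,\psi^\theta \;\leq\; \frac{\theta^2}{4}\!\int_\mathcal{M}e^{-f}\,\mathtt{P}^{t}\,v^{2-m}\,|\nabla\psi|^2\,\psi^{\theta-2},
\end{equation*}
which is even stronger than \eqref{int-key-pro-eq} (by a factor of $4$).

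There is no real obstacle here; the only delicate point is that when $t=\tfrac12$ the estimate is trivial, so one can assume $t>\tfrac12$ without loss of generality, and the integrals above are finite under standard approximation arguments (replacing $v$ by $v+\epsilon$ if one worries about integrability near the zero set of $v$, then letting $\epsilon\to 0$ using monotone/dominated convergence, although in our setting $v>0$ and $\mathtt{P}>0$ everywhere so this is unnecessary). The output is a purely integral inequality that propagates decay of the Cauchy-Schwarz defect $|\nabla\mathtt{P}|^2$ through cutoff functions, and it will be the workhorse for proving $\mathtt{P}\equiv\text{const}$ in the subsequent classification arguments.
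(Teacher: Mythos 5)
Your proof is correct and takes essentially the same route as the paper: both multiply \eqref{fund-pw-ineq-eq} by $e^{-f}\psi^\theta$, integrate by parts, drop the non-negative $W_{\!f}[v]$ term, and then control the remaining cross term. The only difference is cosmetic: the paper applies Cauchy–Schwarz (Hölder's inequality) on the right-hand side to get
\begin{equation*}
\bigl(t-\tfrac12\bigr)A\leq \theta\,A^{1/2}B^{1/2}\,,
\end{equation*}
with $A=\int e^{-f}\mathtt{P}^{t-2}v^{2-m}|\nabla\mathtt{P}|^2\psi^\theta$ and $B=\int e^{-f}\mathtt{P}^{t}v^{2-m}|\nabla\psi|^2\psi^{\theta-2}$, and then divides by $A^{1/2}$ and squares, whereas you apply Young's inequality with an optimized $\varepsilon$. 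Both routes yield the sharper constant $\theta^2/4$ on the right; the paper simply records the weaker $\theta^2$ in the statement. Your Young's-inequality variant has the small advantage of sidestepping the (harmless, since the integrals are over compact sets) division by $A^{1/2}$.
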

			
			\begin{proof}
				Since $W_{\!f}[v]\geq0$ holds, \eqref{int-key-pro-eq} follows by a simple argument combining an integration by parts and H\"{o}lder's inequality. Indeed, let us multiply both sides of \eqref{fund-pw-ineq-eq} by $e^{-f}\,\varphi^\theta$.
				We may integrate by parts to obtain
				\begin{equation*}
					\left(t-\tfrac{1}{2}\right)\!\int_{\mathcal{M}} e^{-f}\,\mathtt{P}^{t-2}v^{2-m}|\nabla\mathtt{P}|^2\,\varphi^\theta\leq -\theta\!\int_{\mathcal{M}} e^{-f}\,\mathtt{P}^{t-1}\,v^{2-m}\,(\nabla\mathtt{P}\cdot\nabla\varphi)\,\varphi^{\theta-1}\,.
				\end{equation*}
				
				Now we apply H\"{o}lder's inequality on the right-hand side, rewritten in the form
				\begin{equation*}
					\begin{aligned}
						\int_{\mathcal{M}} \left(e^{-\frac{f}{2}}\mathtt{P}^\frac{t-2}{2}\,v^\frac{2-m}{2}\left|\nabla\mathtt{P}\right|\,\varphi^\frac{\theta}{2}\right)&\left(e^{-\frac{f}{2}}\,\mathtt{P}^\frac{t}{2}\,v^\frac{2-m}{2}\left|\nabla\varphi\right|\,\varphi^\frac{\theta-2}{2}\right)\\
						&\leq \left(\int_{\mathcal{M}} e^{-f}\mathtt{P}^{t-2}\,v^{2-m}\,\left|\nabla\mathtt{P}\right|^2\,\varphi^{\theta}\right)^{\frac{1}{2}}\left(\int_{\mathcal{M}} e^{-f}\,\mathtt{P}^t\,v^{2-m}\,\left|\nabla\varphi\right|^2\,\varphi^{\theta-2}\right)^{\frac{1}{2}}\,.
					\end{aligned}
				\end{equation*}
				We conclude simply by rearranging the terms.
			\end{proof}
			
			\subsection{Proof of Theorem \ref{thm-leq-crit}}
			Note that in this subsection, it holds $m=n$, since we are working in the critical case for equation \eqref{leq}. Also, since we assume that $\mathrm{Ric}_{n,d}\geq0$, \eqref{bis-gro} is in force, so Corollary \ref{int-est-cor} and in particular the estimates \eqref{int-est-1}--\eqref{int-est-2} hold.
			
			\begin{proof}
				The goal of the majority of the proof below will be dedicated to deriving that 
				\begin{equation}\label{p-const}
					\nabla\mathtt{P}=0\quad\mathrm{in}\;\;\mathcal{M}\,.
				\end{equation}
				
				\medskip
				\noindent{\textbf{Preliminary step:}}
				Let us show that \eqref{p-const} is enough to conclude the proof. By the maximum principle, either $u>0$ or $u=0$ everywhere; from now on, we will assume the positive case.
				First, it follows from \eqref{eq-for-v} and $\nabla\mathtt{P}\equiv0$ that $\mathcal{L}v=\mathtt{P}\equiv c$ for some constant $c$, which is \emph{positive} since $v>0$ in \eqref{eq-for-v}. On the other hand, it also follows from \eqref{ana-geo-eq} and $\nabla\mathtt{P}\equiv0$ that $\mathsf{k}[v]=0$. Now formula \eqref{eq-k} implies that the following statements all hold (recall that we are working under the assumption that $n=m$, so the second term in \eqref{eq-k} in any case vanishes):
				\begin{align}
					&\nabla^2v=\frac{\Delta v}{d}g\,,\label{imp-a}\\
					&d=n\quad\mathrm{or}\quad\frac{n}{n-d}\nabla f\cdot\nabla v=-\mathcal{L}v\equiv-c\,,\label{imp-c}\\
					&\mathrm{Ric}_{n,d}\left(\nabla v,\nabla v\right)=0\label{imp-d}\,.
				\end{align}
				It is a curious feature of the proof that \eqref{imp-d} is never used.
				Now, if the first case of \eqref{imp-c} holds, that is, if $d=n$, by the definition of \eqref{def-BER}, it follows that $\nabla f\equiv0$ so we reduce to the case of \cite[Theorem 1.1]{CFP} and there is nothing left to prove.
				
				Therefore to continue the chain of implications we assume from now on that $n> d$. Let us recall the definition of $\mathcal{L}$ and develop \eqref{imp-c}:
				\begin{equation*}
					-c=-\mathcal{L}v=-\Delta v+\nabla f\cdot\nabla v=-\Delta v-c\left(\frac{n-d}{n}\right)\,.
				\end{equation*}
				This implies that 
				\begin{equation*}
					\Delta v=\frac{cd}{n}
				\end{equation*}
				is in fact a constant. If we plug this expression back into \eqref{imp-a}, it turns out that 
				\begin{equation*}
					\nabla^2 v= \frac{c}{n}g\quad \mathrm{in}\;\;\mathcal{M}\,.
				\end{equation*}
				This guarantees by \cite[Theorem 2]{Tashiro} that $(\mathcal{M},g)$ is isometric to the Euclidean space $\R^d$, and so 
				$v$ is quadratic, \emph{i.e.}
				\begin{equation*}
					v(x)=\frac{c}{2n}|x-x_0|^2+b\,,
				\end{equation*}
				for some non-negative constant $b$ and some $x_0\in\mathcal{M}=\R^d$. By plugging this formula back into \eqref{eq-for-v} (and recalling that $m=n$), we find that $b=\frac{2}{n-2}\frac{1}{c}$, so it holds that
				\begin{equation}\label{weighted-bubble}
					v(x)=\frac{c}{2n}|x-x_0|^2+\frac{2}{n-2}\frac{1}{c}\,,
				\end{equation}
				for some $c>0$. This is precisely \eqref{at-bubble} with $d$ replaced by $n$, so if $n\neq d$, we \emph{would have found} a nontrivial \emph{weighted Euclidean Aubin-Talenti bubble}. 
				
				However, we can still derive a contradiction under the assumption that $n\neq d$, which we recall is equivalent to $\nabla f\neq0$. Indeed, let us observe that from \eqref{imp-c} and \eqref{weighted-bubble} it follows
				\begin{equation*}
					-c\left(\frac{n-d}{n}\right)=\nabla f\cdot\nabla v=\frac{c}{n}\,\nabla f\cdot(x-x_0)\,,
				\end{equation*}
				which implies that 
				\begin{equation*}
					\nabla f(x)=-(n-d)\frac{x-x_0}{|x-x_0|^2}\,.
				\end{equation*}
				Clearly, we now have a contradiction with the assumption $f\in C^2(\mathcal{M})$. 
				Therefore, we conclude that $\nabla f\equiv0$, $n=d$, and that the weighted bubble \eqref{weighted-bubble} is in fact the standard bubble \eqref{at-bubble} if $d\geq3$. If $d=2$ on the other hand, it follows from \cite{GS} that there are no non-trivial positive solutions, so $u=0$.
				
				From now on, we will prove that $\nabla\mathtt{P}\equiv0$.
				
				\medskip
				\noindent{\textbf{Proof of i), case $\boldsymbol{2<n<5}$.}} Let us begin by considering Proposition \ref{int-key-pro}, applying \eqref{int-key-pro-eq} with $t=\frac{1}{2}+\delta$, for some $\delta\in(0,\frac{1}{2})$ to be chosen later, and a fixed $\theta\geq2$ which will not play a role in this case. That is (after taking $\psi\mapsto\varphi_R$ as in \eqref{def-cutoff} for $R\geq1$),
				\begin{equation}\label{int-key-pro-eq-appl}
					\delta^2\!\int_{\mathcal{B}_R(o)} e^{-f}\,\mathtt{P}^{t-2}\,\left|\nabla \mathtt{P}\right|^2\,v^{2-n}\leq\frac{C}{R^2}\int_{\mathcal{A}_{R}(o)} e^{-f}\,\mathtt{P}^t\,v^{2-n}\,,
				\end{equation}
				for $C>0$ depending only on $d$, and
				where we have also chosen a general center point $o\in\mathcal{M}$.
				By definition, $1-t>0$ holds, and from \eqref{eq-for-v}, we have $1\leq c_n\, v\,\mathtt{P}$ for a suitable $c_n>0$ depending only on $n$. Therefore, it holds
				\begin{equation*}
					1\leq c_n(v\,\mathtt{P})^{1-t}\,,
				\end{equation*}
				which we apply to the right-hand side of \eqref{int-key-pro-eq-appl} (with $n=m$) to obtain
				\begin{equation}\label{int-key-pro-eq-appl-bis}
					\begin{aligned}
						\delta^2\!\int_{\mathcal{B}_R(o)} e^{-f}\,\mathtt{P}^{t-2}\,\left|\nabla \mathtt{P}\right|^2\,v^{2-n}&\leq\frac{C}{R^2}\int_{\mathcal{A}_{R}(o)} e^{-f}\,v^{2-n-t}\,(v\,\mathtt{P})\\
						&=\frac{C}{R^2}\int_{\mathcal{A}_{R}(o)} e^{-f}\,v^{2-n-t}\,(|\nabla v|^2+1)\,,
					\end{aligned}
				\end{equation}
				for a constant $C>0$ depending only on $d$ and $n$.
				Continuing the chain of inequalities, if
				\begin{equation}\label{n-cond-1}
					2\leq n-2+t<\frac{n}{2}+1\,
				\end{equation}
				holds (which is equivalent to $\frac{7}{2}\leq n<5$),
				the energy estimate \eqref{int-est-1} may be applied, yielding
				\begin{equation*}
					\int_{\mathcal{A}_{R}(o)} e^{-f}\,v^{2-n-t}\,(|\nabla v|^2+1)\leq C\, R^{2-t}\,,
				\end{equation*}
				for a constant $C>0$ depending only on $d$, $n$, $t$, and the geometry of $\mathcal{M}$ on $\mathcal{B}_1(o)$. Plugging this estimate back into \eqref{int-key-pro-eq-appl-bis} then yields
				\begin{equation}\label{pre-r-inf}
					\delta^2\!\int_{\mathcal{B}_R(o)} e^{-f}\,\mathtt{P}^{t-2}\,\left|\nabla \mathtt{P}\right|^2\,v^{2-n}\leq{C}\,R^{-t}\,,
				\end{equation}
				for a constant $C>0$ depending only on $d$, $n$, $t$, and the geometry of $\mathcal{M}$ on $\mathcal{B}_1(o)$,
				which, upon taking $R\to+\infty$, implies that $\nabla\mathtt{P}\equiv0$ as desired, for $n,t$ satisfying \eqref{n-cond-1}. 
				
				If, on the other hand, it holds
				\begin{equation}\label{n-cond-2}
					0\leq \frac{n-2-t}{1-t} \leq \frac{n}{2}+1\,,
				\end{equation}
				we conclude in a similar way. Note that, under our choice of $t$, \eqref{n-cond-2} is equivalent to $\frac{5}{2}<n<4$.
				Indeed, let us rewrite $\mathtt{P}^t\,v^{2-n}=(\mathtt{P}^t\,v^{-t})\,v^{2-n+t}$ and apply H\"{o}lder's inequality with exponents $\frac{1}{t}$ and $\frac{1}{1-t}$
				to the right-hand side of \eqref{int-key-pro-eq-appl}. We obtain
				\begin{equation}\label{holder-appl}
					\begin{aligned}
						\int_{\mathcal{A}_R} e^{-f}\,\mathtt{P}^t\,v^{2-n}&\leq\left(\int_{\mathcal{A}_R} e^{-f}\,\mathtt{P}\,v^{-1}\right)^{t}\left(\int_{\mathcal{A}_R} e^{-f}\,v^{\frac{2-n+t}{1-t}}\right)^{1-t}\\
						&\leq C\,\left(\int_{\mathcal{A}_R} e^{-f}\,v^{-2}\left(|\nabla v|^2+1\right)\right)^{t}\left(\int_{\mathcal{A}_R} e^{-f}\,v^{\frac{2-n+t}{1-t}}\right)^{1-t},
					\end{aligned}
				\end{equation}
				for a constant $C>0$ depending only on $n$, where in the second inequality we have used \eqref{eq-for-v}.
				
				Now, since \eqref{bis-gro} is in force, both integral terms on the right-hand side of \eqref{holder-appl} can be estimated using Corollary \ref{int-est-cor}: the first by \eqref{int-est-1}, since
				\begin{equation*}
					2<\frac{n}{2}+1\,,
				\end{equation*}
				and the second by \eqref{int-est-2}, since we recall that \eqref{n-cond-2} holds.
				In this way, from \eqref{holder-appl}, we obtain
				\begin{equation}\label{r1}
					\int_{\mathcal{A}_R} e^{-f}\,\mathtt{P}^t\,v^{2-n}\leq C\left(R^{n-2}\right)^{t}\left(R^{n-\frac{n-2-t}{1-t}}\right)^{1-t}=C\,R^{-t+2},
				\end{equation}
				for a $C>0$ depending only on $d$, $n$, $t$, and the geometry of $\mathcal{M}$ on $\mathcal{B}_1(o)$. Applying this to \eqref{int-key-pro-eq} yields \eqref{pre-r-inf} again, and we conclude by taking $R\to+\infty$ as before. Then $\nabla\mathtt{P}\equiv0$ for all $n,t$ satisfying \eqref{n-cond-1} or \eqref{n-cond-2}, the combination of which is equivalent to $\frac{5}{2}<n<5$, recalling the definition of $t$ and the arbitrariness of $\delta\in(0,\frac{1}{2})$.
				
				To handle the case $2<n<\frac{5}{2}$ we modify the previous argument, using additionally Lemma \ref{super-harm-lem} to make a pointwise estimate a positive power of $v$. Indeed, rewriting \eqref{super-harm-ineq} in terms of $v$, we have
				\begin{equation}\label{v-harm-appl}
					v(x)\leq C\,\mathrm{dist}(x,o)^2\quad\mathrm{in}\;\;\mathcal{M}\setminus \mathcal{B}_1(o)\,,
				\end{equation}
				for a constant $C>0$ depending only on $n$ and the behavior of $v$ in $\mathcal{B}_1(o)$. Similarly to above, we rewrite $\mathtt{P}^t\,v^{2-n}=v^{\frac{1}{2}}\,(\mathtt{P}^t\,v^{-t})\,v^{\frac{3}{2}-n+t}$, apply H\"{o}lder's inequality with exponents $\frac{1}{t}$ and $\frac{1}{1-t}$, and finally apply \eqref{v-harm-appl}, which yields
				\begin{equation}\label{holder-appl-bis}
					\begin{aligned}
						\int_{\mathcal{A}_R} e^{-f}\,\mathtt{P}^t\,v^{2-n}&\leq\norm{v}^{\frac{1}{2}}_{L^\infty(\mathcal{A}_R)}\left(\int_{\mathcal{A}_R} e^{-f}\,\mathtt{P}\,v^{-1}\right)^{t}\left(\int_{\mathcal{A}_R} e^{-f}\,v^{\frac{3/2-n+t}{1-t}}\right)^{1-t}\\
						&\leq C\,\norm{v}^{\frac{1}{2}}_{L^\infty(\mathcal{A}_R)}\left(\int_{\mathcal{A}_R} e^{-f}\,v^{-2}\left(|\nabla v|^2+1\right)\right)^{t}\left(\int_{\mathcal{A}_R} e^{-f}\,v^{\frac{3/2-n+t}{1-t}}\right)^{1-t}\\
						&\leq C\,R\,(R^{n-2})^t\left(R^{n-\frac{n-3/2-t}{1-t}}\right)^{1-t}\\
						&= C\,R^{-t+\frac{5}{2}}\,,
					\end{aligned}
				\end{equation}
				for a constant $C>0$ depending only on $d$, $n$, $t$, the behavior of $v$ in $\mathcal{B}_1(o)$, and the geometry of $\mathcal{M}$ on $\mathcal{B}_1(o)$. In \eqref{holder-appl-bis}, we have used the energy estimates contained Corollary \ref{int-est-cor} in the same way as before, this time assuming that 
				\begin{equation}\label{n-cond-3}
					0\leq \frac{n-\frac{3}{2}-t}{1-t} \leq \frac{n}{2}+1\,,
				\end{equation}
				which is equivalent to $2<n<\frac{10}{3}$. Plugging \eqref{holder-appl-bis} into \eqref{int-key-pro-eq-appl} yields 
				\begin{equation*}
					\int_{\mathcal{B}_R(o)} e^{-f}\,\mathtt{P}^{t-2}\,\left|\nabla \mathtt{P}\right|^2\,v^{2-n}\leq C\,R^{-t+\frac{1}{2}}=C\,R^{-\delta}\,,
				\end{equation*}
				where $\delta>0$ is chosen sufficiently small and now $C>0$ depends only on $d$, $n$, the behavior of $v$ in $\mathcal{B}_1(o)$, and the geometry of $\mathcal{M}$ on $\mathcal{B}_1(o)$. It now suffices to take $R\to+\infty$ to conclude that $\nabla\mathtt{P}\equiv0$. Note that the three conditions \eqref{n-cond-1}, \eqref{n-cond-2}, and \eqref{n-cond-3} taken together fully cover $2<n<5$.
				
				\medskip
				\noindent{\textbf{Proof of i), case $\boldsymbol{n=5}$.}} 
				As before, the starting point is the estimate \eqref{int-key-pro-eq}, but in this case $\theta\geq2$ is a variable to be chosen later. Let us carry all of the notation from the previous step, noting in particular that $R\geq1$. Applying \eqref{def-cutoff}, and setting $m=n=5$ and $t=\frac{1}{2}+\delta$, we have
				\begin{equation}\label{int-key-pro-eq-5}
					\delta^2\!\int_{\mathcal{M}} e^{-f}\,\mathtt{P}^{t-2}\,\left|\nabla \mathtt{P}\right|^2\,v^{-3}\,\varphi^\theta\leq C\,\frac{\theta^2}{R^2}\int_{\mathcal{A}_R} e^{-f}\,\mathtt{P}^t\,v^{-3}\,\varphi^{\theta-2}\,,
				\end{equation}
				for a $C>0$ depending only on $d$.
				We notice that, from \eqref{eq-for-v}, it follows 
				\begin{equation}\label{pineq0}
					1\le\frac{3}{2}\, v\,\mathtt{P} \,,
				\end{equation}
				\begin{equation}\label{pineq}
					|\nabla v|^2\le \frac{5}{2}\,v\,\mathtt{P} \,,
				\end{equation}
				and
				\begin{equation}\label{pineq2}
					\mathtt{P}\,v \leq \frac{5}{2\varepsilon}\left(\varepsilon\,|\nabla v|^2+\frac{2}{3}\right) \,,
				\end{equation}
				for any $\varepsilon \in (0,1]$.
				By raising \eqref{pineq0} to the power $t$, it follows
				\begin{equation} \label{comp23}
					\begin{aligned}
						\int_{\mathcal{A}_R}e^{-f}\,\mathtt{P}^t\,v^{-3}\,\varphi^{\theta-2}&\le \left(\frac{3}{2}\right)^{\frac12 + \delta}\int_{\mathcal{A}_R}e^{-f}\,\mathtt{P}^{1+2\delta}\,v^{-\frac{5}{2}+\delta}\,\varphi^{\theta-2}\,\\
						&\leq \frac{5}{2}\left(\frac{3}{2}\right)^{\frac12 + \delta}\int_{\mathcal{A}_R}e^{-f}\,\mathtt{P}^{2\delta}\,v^{-\frac{7}{2}+\delta}\left(|\nabla v|^2+\frac{2}{3}\right)\,\varphi^{\theta-2}\,.
					\end{aligned}
				\end{equation}
				where in the second inequality we have used \eqref{pineq2} with $\varepsilon=1$.
				We estimate the last term in \eqref{comp23} using Corollary \ref{ibp-cor}; take \eqref{ibp-formula-2} with $q=\frac{7}{2} -\delta$, $\ell= 2\delta$, and $\psi=\varphi^{\theta-2}$, obtaining
				\begin{equation}\label{comp14}
					\begin{aligned}
						C&\int_{\mathcal{A}_R}e^{-f}\,\mathtt{P}^{1+2\delta}\,v^{-\frac{5}{2}+\delta}\,\varphi^{\theta-2} \\ 
						&\le \underbrace{\int_{\mathcal{A}_R}e^{-f}\,\mathtt{P}^{2\delta}\,v^{-\frac{5}{2}+\delta} \left|\nabla v\cdot\nabla\varphi\right| \,\varphi^{\theta-3} }_{J_1}
						+\underbrace{\int_{\mathcal{A}_R}e^{-f}\,\mathtt{P}^{-1+2\delta}\,v^{-\frac{5}{2}+\delta} \left| \nabla v \cdot\nabla \mathtt{P}\right|\,\varphi^{\theta-2}}_{J_2}\,,
					\end{aligned}
				\end{equation}
				where $C>0$ only depends on $\theta$ and $\delta$.
				
				Now, let us focus on the first term on the right-hand side: $J_1$. An application of Young's inequality, with $\varepsilon_1>0$, $q>1$ and exponents $(q,\frac{q}{q-1})$ gives
				$$
				\mathtt{P}^{2\delta}\left| \nabla v\cdot\nabla\varphi \right|\,\varphi^{-1}  \leq \mathtt{P}^{1+ 2\delta}\,\varphi^{-1}\,\mathtt{P}^{-1}\left|\nabla  v|\, |\nabla \varphi \right|  \leq   C\, \mathtt{P}^{1+ 2\delta}  \left\{ \varepsilon_1^{1-q}\,\varphi^{-q}\,\mathtt{P}^{-q}\,\left| \nabla  v\right|^q\, \left|\nabla \varphi\right|^q  + \varepsilon_1   \right\} \,,
				$$
				for a constant $C$ depending only on $q$.
				Therefore, we have
				\begin{equation}\label{comp161}
					\begin{aligned}
						J_1&\le \underbrace{{C}\,{\varepsilon_1^{1-q}} \int_{\mathcal{A}_R}e^{-f}\,\mathtt{P}^{1-q+2\delta}\,v^{-\frac{5}{2}+\delta}\,\left|\nabla  v\right|^q\,\left|\nabla \varphi\right|^q\,\varphi^{\theta-2-q}}_{J_{1,A}} \\ 
						&+C \,\varepsilon_1 \int_{\mathcal{A}_R}e^{-f}\,\mathtt{P}^{1+2\delta}\,v^{-\frac{5}{2}+\delta}\,\varphi^{\theta-2}\,,
					\end{aligned}
				\end{equation}
				for $C>0$ depending only on $q$.
				From \eqref{pineq} and \eqref{def-cutoff} we obtain 
				$$
				J_{1,A}\le {C}\,{\varepsilon_1^{1-q}}\,R^{-q}\!\int_{\mathcal{A}_R}e^{-f}\,\mathtt{P}^{1-\frac{q}{2}+2\delta}\,v^{-\frac{5-q}{2}+\delta}\,\varphi^{\theta-2-q}\,,
				$$
				where $C>0$ depends only on $d$ and $q$.
				Now, by choosing $q=2(1+2\delta)$ and applying integral estimate \eqref{int-est-2}, we have
				$$
				J_{1,A}\le \frac{C}{\varepsilon_1^{1+4\delta}} R^{-2(1+2\delta)}\int_{\mathcal{A}_R}e^{-f}\,v^{-\frac{3}{2} + 3\delta}\,\varphi^{\theta-4-4\delta} \leq  \frac{C}{\varepsilon_1^{1+4\delta}}R^{ \frac32 - \delta } \,,
				$$ 
				for $C>0$ depending only on $d$, $n$, $\delta$, and the geometry of $\mathcal{M}$ on $\mathcal{B}_1(o)$.
				Thus from \eqref{comp161} we get 
				\begin{equation}\label{comp22}
					J_1\le \frac{C}{\varepsilon_1^{1+4\delta}}  R^{ \frac32 - \delta } +  C \,\varepsilon_1 \int_{\mathcal{A}_R}e^{-f}\,\mathtt{P}^{1+2\delta}\,v^{-\frac{5}{2}+\delta}\,\varphi^{\theta-2}\,,
				\end{equation}
				for $C>0$ depending only on $d$, $n$, $\delta$, and the geometry of $\mathcal{M}$ on $\mathcal{B}_1(o)$.
				
				Next, we consider $J_2$ in \eqref{comp14}. We apply Young's inequality to the integrand in the form
				\begin{equation*}
					\begin{aligned}
						\mathtt{P}^{-1+2\delta}\,v^{-\frac{5}{2}+\delta} \left| \nabla v \cdot\nabla \mathtt{P}\right|\,\varphi^{\theta-2}&\leq\mathtt{P}^{-1+2\delta}\,v^{-\frac{5}{2}+\delta} \left| \nabla v\right| \left|\nabla \mathtt{P}\right|\,\varphi^{\theta-2}\\
						&=\left(\frac{1}{\sqrt{2\,\varepsilon_0\,R^2}}\mathtt{P}^{-\frac{1}{4}+\frac{3\delta}{2}}\,v^{-1+\delta}\left|\nabla v\right|\,\varphi^{\frac{\theta}{2}-2}\right)\left(\sqrt{2\,\varepsilon_0\,R^2}\,\mathtt{P}^{-\frac{3}{4}+\frac{\delta}{2}}\,v^{-\frac{3}{2}}\left|\nabla\mathtt{P}\right|\,\varphi^{\frac{\theta}{2}}\right)\\
						&\leq\frac{1}{4\,\varepsilon_0\,R^2}\mathtt{P}^{-\frac{1}{2}+{3\delta}}\,v^{-2+2\delta}\left|\nabla v\right|^2\,\varphi^{\theta-4}+\varepsilon_0\,R^2\,\,\mathtt{P}^{-\frac{3}{2}+\delta}\,v^{-{3}}\left|\nabla\mathtt{P}\right|^2\,\varphi^{\theta}\,,
					\end{aligned}
				\end{equation*}
				for any $\varepsilon_0>0$.
				Plugging this back into $J_2$, we obtain
				\begin{equation}\label{comp20}
					J_2\le  \underbrace{\frac{1}{4\,\varepsilon_0\,R^2} \int_{\mathcal{A}_R}e^{-f}\,\mathtt{P}^{-\frac{1}{2}+3\delta}\,v^{-2+2\delta}\,\left|\nabla  v\right|^2\,\varphi^{\theta-4}}_{J_{2,A}} +\varepsilon_0\,R^{2}\!\int_{\mathcal{A}_R}e^{-f}\,\mathtt{P}^{-\frac{3}{2} + \delta}\,\left|\nabla \mathtt{P}\right|^2\,v^{-3}\,\varphi^{\theta}\,.
				\end{equation}
				Focusing on the first term, by applying \eqref{pineq} and then using H\"older's inequality we obtain
				\begin{equation}\label{comp33}
					\begin{aligned}
						J_{2, A} &\leq \frac{C}{\varepsilon_0\,R^2} \int_{\mathcal{A}_R}e^{-f}\,\mathtt{P}^{\frac{1}{2}+3\delta}\,v^{-1+2\delta}\,\varphi^{\theta-4}\\
						&\leq \frac{C}{\varepsilon_0\,R^2} \left(\int_{\mathcal{A}_R}e^{-f}\,\mathtt{P}^{(\frac{1}{2}+3\delta)\frac{3}{2}}\,v^{(-1+3\delta)\frac{3}{2}}\,\varphi^{\theta-4}\right)^\frac{2}{3}\left(\int_{\mathcal{A}_R}e^{-f}\,v^{-3\delta}\,\varphi^{\theta-4}\right)^\frac{ 1}{3} \,,   
					\end{aligned}
				\end{equation}
				for some numerical $C>0$.
				Now from \eqref{pineq0} we have that
				\begin{equation*}
					1\le \left(\frac{3}{2}\,\mathtt{P}\,v\right)^{1-(\frac{1}{2}+3\delta)\frac{3}{2}}\,,
				\end{equation*}
				for a sufficiently small $\delta$,
				which, inserted into \eqref{comp33}, yields
				\begin{equation}\label{comp34}
					J_{2,A} \leq \frac{C}{\varepsilon_0\,R^2} \left( \int_{\mathcal{A}_R}e^{-f}\,v^{-\frac{9}{4}}\,(\mathtt{P}\,v)\,\varphi^{\theta-4}\right)^\frac{2}{3}\left(\int_{\mathcal{A}_R}e^{-f}\,v^{-3\delta}\,\varphi^{\theta-4}\right)^\frac{ 1}{3},
				\end{equation}
				for a numerical $C>0$.
				Since $n=5$, it is easy to check that the integral estimates \eqref{int-est-1}--\eqref{int-est-2} may be applied to the first and second terms, respectively, of \eqref{comp34} (first applying, as usual \eqref{pineq2} to the first term). In this way, we obtain
				\begin{equation}\label{comp35}
					J_{2,A}\le \frac{C}{\varepsilon_0}  R^{\frac{3}{2}-\delta}\,,
				\end{equation}
				for $C>0$ depending only on $d$, $n$, and the geometry of $\mathcal{M}$ in $\mathcal{B}_1(o)$.
				Then from \eqref{comp20} and \eqref{comp35}, we obtain that
				\begin{equation}\label{comp21}
					J_2\le \frac{C}{\varepsilon_0}  R^{\frac{3}{2}-\delta} + \varepsilon_0\,R^{2}\!\int_{\mathcal{A}_R}e^{-f}\,\mathtt{P}^{-\frac{3}{2} + \delta}\,\left|\nabla \mathtt{P}\right|^2\,v^{-3}\,\varphi^{\theta}\,,
				\end{equation}
				for all $\varepsilon_0>0$, and for $C>0$ depending only on $d$, $n$, and the geometry of $\mathcal{M}$ in $\mathcal{B}_1(o)$.
				Putting together \eqref{comp14}, \eqref{comp22} and \eqref{comp21} we have 
				\begin{equation*}
					\begin{aligned}
						C\int_{\mathcal{A}_R}e^{-f}\,\mathtt{P}^{1+2\delta}\,v^{-\frac{5}{2}+\delta}\,\varphi^{\theta-2}&\le  \left(\varepsilon_1^{-(1+4\delta)}+ \varepsilon_0^{-1} \right)  R^{ \frac32 - \delta }\\
						&+  \varepsilon_1 \int_{\mathcal{A}_R}e^{-f}\,\mathtt{P}^{1+2\delta}\,v^{-\frac{5}{2}+\delta}\,\varphi^{\theta-2} \\
						&+ \varepsilon_0\,R^{2}\!\int_{\mathcal{A}_R}e^{-f}\,\mathtt{P}^{-\frac{3}{2} + \delta}\,\left|\nabla \mathtt{P}\right|^2\,v^{-3}\,\varphi^{\theta}\,,
					\end{aligned}
				\end{equation*}
				for any $\varepsilon_0>0$, $\varepsilon_1>0$, $R\geq1$, and a constant $C>0$ depending only on $d$, $n$, $\theta$, $\delta$, and the geometry of $\mathcal{M}$ in $\mathcal{B}_1(o)$. From now on in this proof $C>0$ will have only these dependencies.
				In particular, for $\varepsilon_1$ small enough (depending on the above quantities through the constant $C$), the second term on the right-hand side may be absorbed into the left-hand side, yielding 
				\begin{equation}\label{comp36}
					C\int_{\mathcal{A}_R}e^{-f}\,\mathtt{P}^{1+2\delta}\,v^{-\frac{5}{2}+\delta}\,\varphi^{\theta-2} \le \varepsilon_0^{-1}\,R^{\frac{3}{2}-\delta}+\varepsilon_0\,R^{2}\!\int_{\mathcal{A}_R}e^{-f}\,\mathtt{P}^{-\frac{3}{2} + \delta}\,\left|\nabla \mathtt{P}\right|^2\,v^{-3}\,\varphi^{\theta}\,,
				\end{equation}
				where $C>0$ is as before, and we must have $\theta\geq4$, and $\delta\in(0,\frac{1}{2})$, the latter of which must also be sufficiently small, due to the step before \eqref{comp34}. 
				
				To conclude, we combine \eqref{int-key-pro-eq-5}, \eqref{comp23}, and \eqref{comp36}, which finally yields
				\begin{equation*}
					\int_{\mathcal{M}} e^{-f}\,\mathtt{P}^{t-2}\,\left|\nabla \mathtt{P}\right|^2\,v^{-3}\,\varphi^\theta\leq C\,R^{-2}\left(\varepsilon_0^{-1}\,R^{\frac{3}{2}-\delta}+\varepsilon_0\,R^{2}\!\int_{\mathcal{A}_R}e^{-f}\,\mathtt{P}^{-\frac{3}{2} + \delta}\left|\nabla \mathtt{P}\right|^2\,v^{-3}\,\varphi^{\theta}\right)\,,
				\end{equation*}
				where $C>0$ is as before. Recalling that $t-2=-\frac{3}{2}+\delta$, by choosing a sufficiently small $\varepsilon_0$ (in terms of $\theta$, $\delta$), we may absorb the integral on the right-hand side into the left-hand side, which gives
				\begin{equation}\label{r2}
					\int_{\mathcal{B}_R(o)} e^{-f}\,\mathtt{P}^{t-2}\,\left|\nabla \mathtt{P}\right|^2\,v^{-3}\leq C\,R^{-\frac{1}{2}-\delta}\,,\quad\mathrm{for}\;\mathrm{all}\;\;R\geq1\,,
				\end{equation}
				and for a $C>0$ depending only on $d$, $n$, $\theta$, $\delta$, and the geometry of $\mathcal{M}$ in $\mathcal{B}_1(o)$. Taking $R\to+\infty$ again yields $\nabla\mathtt{P}\equiv0$.
				
				\medskip
				\noindent{\textbf{Proof of ii)}}
				Let us fix a general $o\in\mathcal{M}$ and $R\geq1$ as usual.
				Let us consider \eqref{int-key-pro-eq} with $t>\frac{1}{2}$ to be chosen later,
				and $\theta\geq2$ fixed. That is,
				\begin{equation}\label{40}
					\begin{aligned}
						\int_{\mathcal{B}_R(o)} e^{-f}\,\mathtt{P}^{t-2}\,\left|\nabla \mathtt{P}\right|^2\,v^{2-n}\leq \frac{C}{R^2}\!\int_{\mathcal{A}_R(o)} e^{-f}\,\mathtt{P}^t\,v^{2-n}\,,
					\end{aligned}
				\end{equation}
				for $C>0$ depending only on $d$ and $t$.
				Let us estimate the right-hand side, using \eqref{eq-for-v}:
				\begin{equation*}
					\mathtt{P}^t\,v^{2-n}=(v\,\mathtt{P})^t\,v^{2-n-t}\leq C_1\!\left(|\nabla v|^{2t}+C_2\right)\,v^{2-n-t}\,,
				\end{equation*}
				for $C_1$ and $C_2$ positive and depending at most on $n$ and $t$. This yields
				\begin{equation}\label{41}
					\frac{C}{R^2}\!\int_{\mathcal{A}_R(o)} e^{-f}\,\mathtt{P}^t\,v^{2-n}\leq \underbrace{\frac{1}{R^2}\!\int_{\mathcal{A}_R(o)} e^{-f}\,|\nabla v|^{2t}\,v^{2-n-t}}_{\mathsf{I}_R}+ \underbrace{\frac{1}{R^2}\!\int_{\mathcal{A}_R(o)} e^{-f}\,v^{2-n-t}}_{\mathsf{II}_R}\,,
				\end{equation}
				for $C>0$ depending only on $d$ and $t$.
				
				In order to continue the estimate of $\mathsf{I}_R$, it is useful to observe that by the assumption \eqref{pw-decay-cond} it follows that 
				\begin{equation}\label{pw-decay-cond-v}
					v^{-1}(x)\leq C\left(\mathrm{dist}(x,o)\right)^{-\frac{2\alpha}{n-2}}\quad\mathrm{in}\;\;\mathcal{M}\setminus \mathcal{B}_1(o)\,,
				\end{equation}
				where $C>0$ depends only on the behavior of $v$ in $\mathcal{B}_1(o)$.
				Furthermore, from the Yau-type estimate Lemma \ref{yau-lem} and \eqref{pw-decay-cond-v}, it can be derived (following \cite[Corollary 2.2]{FMM}) that
				\begin{equation}\label{yau-appl}
					\frac{|\nabla v(x)|^2}{v(x)^2}\leq C \left(\mathrm{dist}(x,o)\right)^{-\frac{4\alpha}{n-2}}\quad\mathrm{in}\;\;\mathcal{M}\setminus \mathcal{B}_1(o)\,,
				\end{equation}
				where $C>0$ only depends on $d$, $n$, and the behavior of $v$ in $\mathcal{B}_1(o)$.
				With \eqref{pw-decay-cond-v}--\eqref{yau-appl} in hand and with the choice $t=\frac{1}{2}+\varepsilon$ for a small $\varepsilon>0$, we may estimate $\mathsf{I}_R$ in the following way
				\begin{equation}\label{42}
					\begin{aligned}
						\mathsf{I}_R=R^{-2}\!\int_{\mathcal{A}_R(o)} e^{-f}\,|\nabla v|^{1+2\varepsilon}\,v^{\frac{3}{2}-n-\varepsilon}&=R^{-2}\!\int_{\mathcal{A}_R(o)} e^{-f}\,\left(\frac{|\nabla v|}{v}\right)^{\!2\varepsilon}\,v^{\frac{-(n-5-2\varepsilon)}{2}}\,|\nabla v|\,v^{-\frac{n}{2}-1+{\varepsilon}}\,\\
						&\leq C\,R^{-2}\,R^{-\frac{4\alpha\varepsilon}{n-2}}\,R^{-\frac{\alpha}{n-2}(n-5-2\varepsilon)}\!\int_{\mathcal{A}_R(o)} e^{-f}\,|\nabla v|\,v^{-\frac{n}{2}-1+{\varepsilon}}\,,
					\end{aligned}
				\end{equation}
				for a $C>0$ depending on $d$, $n$, and the behavior of $v$ in $\mathcal{B}_1(o)$. In the application of \eqref{pw-decay-cond-v}, we have used the fact that $n>5$, and implicitly made the additional restriction $\varepsilon<\frac{n-5}{2}$. We now complete the argument by applying H\"{o}lder's inequality and integral estimates \eqref{int-est-1}--\eqref{int-est-2} to the right-hand side of \eqref{42}. That is,
				\begin{equation*}
					\begin{aligned}
						\int_{\mathcal{A}_R(o)} e^{-f}\,|\nabla v|\,v^{-\frac{n}{2}-1+{\varepsilon}}&\leq \left(\int_{\mathcal{A}_R(o)} e^{-f}\,v^{-\frac{n}{2}-1+{\varepsilon}}\,|\nabla v|^2\right)^{\frac{1}{2}}\left(\int_{\mathcal{A}_R(o)} e^{-f}\,v^{-\frac{n}{2}-1+{\varepsilon}}\right)^{\frac{1}{2}}\\
						&\leq C\,R^{\frac{n}{2}-1+{\varepsilon}}\,,
					\end{aligned}
				\end{equation*}
				for $C>0$ depending only on $d$, $n$, $\varepsilon$, and the geometry of $\mathcal{M}$ in $\mathcal{B}_1(o)$. Plugging this estimate back into \eqref{42} yields
				\begin{equation*}
					\mathsf{I}_R\leq C\,R^{\frac{-2\alpha(n-5+2\varepsilon)+(n-2)(n-6+2\varepsilon)}{2(n-2)}}\,,
				\end{equation*}
				for a $C>0$ depending only on $d$, $n$, $\varepsilon$, the geometry of $\mathcal{M}$ in $\mathcal{B}_1(o)$ and the behavior of $v$ in $\mathcal{B}_1(o)$. In view of \eqref{pw-decay-cond-1}, we may take $\varepsilon>0$ sufficiently small so that 
				\begin{equation}\label{ito01}
					\lim_{R\to+\infty}\mathsf{I}_R=0\,.
				\end{equation}
				
				Let us now estimate $\mathsf{II}_R$, using only \eqref{pw-decay-cond-v} and \eqref{int-est-2}. We obtain
				\begin{equation*}
					\begin{aligned}
						\mathsf{II}_R=R^{-2}\!\int_{\mathcal{A}_R(o)} e^{-f}\,v^{\frac{5-n-2\varepsilon}{2}}\,v^{-\frac{n}{2}-1}\leq C\,R^{-2}\,R^{-\frac{2\alpha}{n-2}\frac{n-5+2\varepsilon}{2}}\,R^{\frac{n}{2}-1}=C\,R^{\frac{-2\alpha(n-5+2\varepsilon)+(n-2)(n-6)}{2(n-2)}}\,,
					\end{aligned}
				\end{equation*}
				for a $C>0$ depending only on $d$, $n$, $\varepsilon$, the geometry of $\mathcal{M}$ in $\mathcal{B}_1(o)$ and the behavior of $v$ in $\mathcal{B}_1(o)$. Applying once more \eqref{pw-decay-cond-1}, we derive that
				\begin{equation}\label{iito01}
					\lim_{R\to+\infty}\mathsf{II}_R=0\,.
				\end{equation}
				In view of \eqref{40}, \eqref{41}, \eqref{ito01}, and  \eqref{iito01}, it follows that $\nabla\mathtt{P}\equiv0$.
				
				\medskip
				\noindent{\textbf{Proof of iii)}} 
				Let us begin by considering \eqref{41}, now with $t=\frac{n-2}{2}$. As before, we use \eqref{pw-decay-cond-v} and the Yau-type estimate \eqref{yau-appl}. We derive
				\begin{equation*}
					\begin{aligned}
						\mathsf{I}_R&=\frac{1}{R^2}\int_{\mathcal{A}_R(o)} e^{-f}\,v^{-\varepsilon}\,\left(\frac{|\nabla v|}{v}\right)^{\!n-4}\,v^{-\frac{n}{2}-1+\varepsilon}\,|\nabla v|^2\\
						&\leq C\,R^{-2}\,R^{-\frac{2\alpha\varepsilon}{n-2}}\,R^{-\frac{2\alpha(n-4)}{n-2}}\int_{\mathcal{A}_R(o)} e^{-f}\,v^{-\frac{n}{2}-1+\varepsilon}\,|\nabla v|^2\\
						&\leq C\,R^{-2}\,R^{-\frac{2\alpha\varepsilon}{n-2}}\,R^{-\frac{2\alpha(n-4)}{n-2}}\,R^{\frac{n}{2}-1+\varepsilon}\\
						&= C\,R^{\frac{-4\alpha(n-4+\varepsilon)+(n-6+2\varepsilon)(n-2)}{2(n-2)}}\,,
					\end{aligned}
				\end{equation*}
				for $C>0$ depending only on $d$, $n$, $\varepsilon$, the geometry of $\mathcal{M}$ on $\mathcal{B}_1(o)$, and the behavior of $v$ in $\mathcal{B}_1(o)$, where in the last step we have used integral estimate \eqref{int-est-1}. Clearly, in view of \eqref{pw-decay-cond}, it follows, taking $\varepsilon>0$ suitably small, that 
				\begin{equation}\label{conv1}
					\mathsf{I}_R\to0\,,\quad\mathrm{as}\;\;R\to+\infty\,.
				\end{equation}
				
				It is more straightforward to estimate $\mathsf{II}_R$. We find, using \eqref{pw-decay-cond} and \eqref{int-est-2}
				\begin{equation*}
					\begin{aligned}
						\mathsf{II}_R=R^{-2}\!\int_{\mathcal{A}_R(o)} e^{-f}\,v^{-n+4}\,v^{-\frac{n}{2}-1}\leq C\,R^{-2}\,R^{-\frac{2\alpha}{n-2}(n-4)}\,R^{\frac{n}{2}-1}=C\,R^{\frac{-4\alpha(n-4)+(n-2)(n-6)}{2(n-2)}}\,,
					\end{aligned}
				\end{equation*}
				for $C>0$ depending only on $d$, $n$, the geometry of $\mathcal{M}$ on $\mathcal{B}_1(o)$, and the behavior of $v$ in $\mathcal{B}_1(o)$. Again, by \eqref{pw-decay-cond}, it follows that 
				\begin{equation*}
					\mathsf{II}_R\to0\,,\quad\mathrm{as}\;\;R\to+\infty\,,
				\end{equation*}
				which, combined with \eqref{40}, \eqref{41}, and \eqref{conv1} yields $\nabla\mathtt{P}\equiv0$.
				
				\medskip
				\noindent{\textbf{Proof of iv)}}
				We first claim that 
				\begin{equation}\label{global-energy-claim}
					\int_{\mathcal{M}}e^{-f}\,u^{\frac{2n}{n-2}}<+\infty\quad\mathrm{implies}\;\;\int_{\mathcal{M}}e^{-f}\,\left|\nabla u\right|^2<+\infty\,,
				\end{equation}
				the proof of which is quite standard. To this end, let us fix a general $o\in\mathcal{M}$ and multiply \eqref{leq-crit} by $e^{-f}\,u\,\varphi_R^2$, where $\varphi_R$ is the standard cutoff function defined by Lemma \ref{cutoff-lem} with $R\geq1$, and integrate by parts. We obtain, using Young's inequality
				\begin{equation*}
					\begin{aligned}
						\int_{\mathcal{M}}e^{-f}\,\left|\nabla u\right|^2\,\varphi_R^2&\leq\int_{\mathcal{M}}e^{-f}\,u^{\frac{2n}{n-2}}\,\varphi_R^2+2\int_{\mathcal{M}}e^{-f}\,u\,\varphi_R\,|\nabla u|\,|\nabla\varphi_R|\\
						&\leq\int_{\mathcal{M}}e^{-f}\,u^{\frac{2n}{n-2}}\,\varphi_R^2+\frac{1}{2}\int_{\mathcal{M}}e^{-f}\,\left|\nabla u\right|^2\,\varphi_R^2+2\int_{\mathcal{M}}e^{-f}\,u^2\,|\nabla\varphi_R|^2\,.
					\end{aligned}
				\end{equation*}
				Next, applying H\"{o}lder's inequality and \eqref{def-cutoff} yields
				\begin{equation*}
					\begin{aligned}
						\frac{1}{2}\int_{\mathcal{B}_R(o)}e^{-f}\,\left|\nabla u\right|^2&\leq\int_{\mathcal{M}}e^{-f}\,u^{\frac{2n}{n-2}}+2\int_{\mathcal{M}}e^{-f}\,u^2\,|\nabla\varphi_R|^2\\
						&\leq\int_{\mathcal{M}}e^{-f}\,u^{\frac{2n}{n-2}}+2\,\left(\int_{\mathcal{M}}e^{-f}\,u^{\frac{2n}{n-2}}\right)^{\!\frac{n-2}{2}}\left(\int_{\mathcal{M}}e^{-f}\,|\nabla\varphi_R|^n\right)^{\!\frac{2}{n}}\\
						&\leq \int_{\mathcal{M}}e^{-f}\,u^{\frac{2n}{n-2}}+C\left(\int_{\mathcal{M}}e^{-f}\,u^{\frac{2n}{n-2}}\right)^{\!\frac{n-2}{2}},
					\end{aligned}
				\end{equation*}
				for all $R\geq1$, where in the last passage we have used \eqref{bis-gro}, and for a constant $C>0$ depending only on $d$, $n$, and the geometry of $\mathcal{M}$ in $\mathcal{B}_1(o)$.
				We take $R\to+\infty$ to complete the proof of the claim \eqref{global-energy-claim}.
				
				Now to prove $\nabla\mathtt{P}\equiv0$, we fix a general $o\in\mathcal{M}$, and consider \eqref{fund-pw-ineq-eq}--\eqref{def-w} with $t=1$ and $m=n$. It holds $W_{\!f}[v]\geq0$ as usual so we may ignore the second term on the left-hand side of \eqref{fund-pw-ineq-eq}. We now multiply \eqref{fund-pw-ineq-eq} by $e^{-f}\,\varphi_R^\theta$ and integrate by parts, where $\theta\geq2$ is fixed and $R\geq1$. It holds, also using H\"{o}lder's inequality, that
				\begin{equation}\label{4}
					\begin{aligned}
						\frac{1}{2}\int_{\mathcal{M}}e^{-f}\,\mathtt{P}^{-1}\,v^{2-n}\,|\nabla\mathtt{P}|^2\,\varphi_R^\theta&\leq -\theta\int_{\mathcal{M}}e^{-f}\,v^{2-n}\,\varphi_R^{\theta-1}\,(\nabla\mathtt{P}\cdot\nabla\varphi_R)\\
						&\leq \theta\left(\int_{\mathcal{M}} e^{-f}\,\mathtt{P}^{-1}\,v^{2-n}\,|\nabla\mathtt{P}|^2\,\varphi_R^\theta\right)^{\frac{1}{2}}\left(\int_\mathcal{M}e^{-f}\,\mathtt{P}\,v^{2-n}\,\varphi_R^{\theta-2}\,|\nabla\varphi_R|^2\right)^{\frac{1}{2}}\,.
					\end{aligned}
				\end{equation}
				The first term on the right-hand side of \eqref{4} is absorbed into its left-hand side.
				
				Next, from \eqref{eq-for-v} and making the change of variables $v=u^{-\frac{2}{n-2}}$, the following estimate on the second term may be easily checked: 
				\begin{equation}\label{5}
					\begin{aligned}
						\int_\mathcal{M}e^{-f}\,\mathtt{P}\,v^{2-n}\,\varphi_R^{\theta-2}\,|\nabla\varphi_R|^2&\leq  \frac{C}{R^2}\int_{\mathcal{A}_R}e^{-f}\,(\mathtt{P}\,v)\,v^{1-n}\\
						&=\frac{C}{R^2}\int_{\mathcal{A}_R}e^{-f}\,v\left(\frac{2n}{(n-2)^2}\left|\nabla u\right|^2+\frac{2}{n-2}\,u^{\frac{2n}{n-2}}\right),
					\end{aligned}
				\end{equation}
				where $C>0$ depends only on $d$.
				
				Now, by Lemma \ref{super-harm-lem}, applied to $v$ for $R=1$, it follows that
				\begin{equation}\label{super-harm-appl}
					v(x)\leq C\,{\mathrm{dist}(x,o)^{2}}\quad\mathrm{if}\;\;\mathrm{dist}(x,o)\geq1\,,
				\end{equation}
				where $C>0$ depends only on $n$ and the behavior of $v$ on $\mathcal{B}_1(o)$.
				If we insert \eqref{super-harm-appl} into \eqref{5}, we obtain
				\begin{equation}\label{6}
					\begin{aligned}
						\int_\mathcal{M}e^{-f}\,\mathtt{P}\,v^{2-n}\,\varphi_R^{\theta-2}\,|\nabla\varphi_R|^2\leq C\!\int_{\mathcal{A}_R}e^{-f}\left(\frac{2n}{(n-2)^2}\left|\nabla u\right|^2+\frac{2}{n-2}\,u^{\frac{2n}{n-2}}\right),
					\end{aligned}
				\end{equation}
				where $C>0$ depends only on $d$, $n$, and the behavior of $u$ on $\mathcal{B}_1(o)$. Inserting \eqref{6} back into \eqref{4} and suitably rearranging terms brings us to 
				\begin{equation*}
					\int_{\mathcal{B}_R(o)}e^{-f}\,\mathtt{P}^{-1}\,v^{2-n}\,|\nabla\mathtt{P}|^2\leq C\!\int_{\mathcal{A}_R(o)}e^{-f}\left(\left|\nabla u\right|^2+u^{\frac{2n}{n-2}}\right),
				\end{equation*}
				for a $C>0$ depending only on $d$, $n$, and the behavior of $u$ on $\mathcal{B}_1(o)$. In view of the hypothesis and claim \eqref{global-energy-claim}, we may take $R\to+\infty$ to determine that $\nabla\mathtt{P}\equiv0$.
			\end{proof}
			
			\section{On infinity Bakry-\'{E}mery Ricci curvature}\label{sec:infty}
			We may make some adaptations to the strategy employed in the previous sections to prove our results concerning $\infty$-Bakry-\'{E}mery Ricci curvature -- Theorem \ref{thm-infty-crit}, Theorem \ref{thm:exp-class}, and Theorem \ref{thm-infty}. First of all, since $n=\infty$, all references to $n$ disappear from the considerations, and we are just left with $d$, \emph{i.e.} the (integer) dimension of $\mathcal{M}$, and $m$, the sub-criticality index defined in \eqref{def-m} (when we are considering positive solutions to \eqref{leq}). On the other hand, for solutions of \eqref{liou-eq},
			we have $d=m=2$.

			The basic technical impediment for obtaining non-existence and classification/rigidity results involving $\mathrm{Ric}_{\infty,d}$ curvature is that $W_{\!f}[v]$, a term appearing on the \emph{left-hand side} of \eqref{fund-pw-ineq-eq} reduces in this context to
			\begin{equation}\label{def-w-inf}
				\begin{aligned}
					W_{\!f}[v]&=\tfrac{m-d}{m^2}(\mathcal{L}v)^2+\tfrac{2}{m}\mathcal{L}v\,\nabla f\cdot\nabla v+\mathrm{Ric}_{\infty,d}(\nabla v,\nabla v)\,,
				\end{aligned}
			\end{equation}
			which \emph{may be negative} if $\nabla f\cdot\nabla v$ is. However, thanks to the assumption \eqref{dot-cond}, we ensure that in fact $\nabla f\cdot\nabla v\geq0$ (recall that $\nabla v$ and $\nabla u$ have opposite signs when $u=v^{-\frac{2}{p-1}}$ or when $u=-2\ln v$).
			
			Next, when one attempts to prove Theorem \ref{thm-infty-crit} or Theorem \ref{thm:exp-class} via an adaptation of the proof of Theorem \ref{thm-leq-crit}, it turns out that 
			$\mathtt{P}\equiv c$ does not imply in a straightforward way that $\mathcal{M}$ is diffeomorphic to $\R^d$ as in the preliminary step of the proof of Theorem \ref{thm-leq-crit}. Indeed \eqref{ana-geo-eq} still holds, so $\mathtt{P}\equiv c$ still implies that $k[v]=0$, but \eqref{eq-k} reduces to
			\begin{equation}\label{eq-k-inf}
				\mathsf{k}[v]=\norm{\nabla^2v-\frac{\Delta v}{d}g\,}^2_{\mathrm{H.S.}}\!+\frac{2}{d}\,\mathcal{L}v\,\nabla f\cdot\nabla v+\frac{1}{d}\left|\nabla f\cdot\nabla v\right|^2+\mathrm{Ric}_{\infty,d}\left(\nabla v,\nabla v\right),
			\end{equation}
			where we have also used the fact that we are in the critical case $m=d$. Again, it is only due to our assumption \eqref{dot-cond} that we may deduce useful information from \eqref{eq-k-inf} when $k[v]=0$.
			
			\subsection{Proof of Theorem \ref{thm-infty}}
			\begin{proof}
				By the maximum principle for $\mathcal{L}$, either $u=0$ everywhere or $u>0$ everywhere. In order to obtain a contradiction, let us assume the latter case, so it follows that $v>0$ and $\mathtt{P}>0$. {Also, from now on we assume that $d\geq3$, since $d=2$ and the assumption \eqref{bis-gro-inf} implies that $\mu(\mathcal{B}_R)=\mathcal{O}(R^2)$, so Theorem \ref{thm:vol-growth-liou} holds for all $p>1$.}
				
				Keeping in mind the stated non-negativity of $W_{\!f}[v]$, let us adapt the proof of Theorem \ref{thm-subcrit}, making suitable changes where necessary. First, \eqref{fund-pw-ineq-eq} still holds, but now with the modified definition of $W_{\!f}[v]$ given in \eqref{def-w-inf}. Then we may multiply all terms in \eqref{fund-pw-ineq-eq} by $e^{-f}\,\varphi_R^\theta$ (taking $\varphi_R=\varphi$ for simplicity) and integrate by parts as before. In fact, we may continue the proof (recalling that $\mathrm{Ric}_{\infty,d}\geq0$) up to \eqref{young-1}, which in its modified form reads
				\begin{equation}\label{young-1-inf}
					\begin{aligned}
						\left(t-\tfrac{1}{2}-\varepsilon\right)\!\int_{\mathcal{M}} e^{-f}\,\mathtt{P}^{t-2}\,v^{2-m}\,&|\nabla\mathtt{P}|^2\,\varphi^\theta+\frac{m-d}{m}\int_{\mathcal{M}} e^{-f}\,\mathtt{P}^{t+1}\,v^{1-m}\,\varphi^\theta\\&+2\int_{\mathcal{M}} e^{-f}\,\mathtt{P}^{t}\,v^{1-m}\,(\nabla f\cdot\nabla v)\,\varphi^\theta\leq \frac{\theta^2}{\varepsilon}\!\int_{\mathcal{M}} e^{-f}\,\mathtt{P}^{t}\,v^{2-m}\,|\nabla \varphi|^2\,\varphi^{\theta-2}\,.    
					\end{aligned}
				\end{equation}
				We apply Young's inequality to the right-hand side as
				\begin{equation*}
					\tfrac{\theta^2}{\varepsilon}\mathtt{P}^{t}\,v^{2-m}\,|\nabla \varphi|^2\,\varphi^{\theta-2}\leq{\varepsilon}\,\mathtt{P}^{t+1}\,v^{1-m}\,\varphi^{\theta}+\mathtt{K}\,v^{2-m+t}\,|\nabla\varphi|^{2(t+1)}\,\varphi^{\theta-2(t+1)}\,,
				\end{equation*}
				for $\mathtt{K}>0$ depending only on $\theta$ and $\varepsilon$.
				This yields (discarding the third term on the left-hand side of \eqref{young-1-inf}, which by assumption is non-negative)
				\begin{equation}\label{quasi}
					\begin{aligned}
						\left(t-\tfrac{1}{2}-\varepsilon\right)\!\int_{\mathcal{M}} e^{-f}\,\mathtt{P}^{t-2}\,v^{2-m}\,|\nabla\mathtt{P}|^2\,\varphi^\theta&+\left(\tfrac{m-d}{m}-\varepsilon\right)\!\int_{\mathcal{M}} e^{-f}\,\mathtt{P}^{t+1}\,v^{1-m}\,\varphi^\theta\\&\leq \mathtt{K}\!\int_{\mathcal{M}} e^{-f}\,v^{2-m+t}\,|\nabla \varphi|^{2(t+1)}\,\varphi^{\theta-2(t+1)}\,.    
					\end{aligned}
				\end{equation} 
				Now we may conclude by taking $t=m-2$, and since $m>d\geq3$, such a choice is valid in the sense of \eqref{t-cond}.
				Now using \eqref{def-cutoff} and \eqref{bis-gro-inf}, and choosing $\theta$ sufficiently large in terms of $m$, the right-hand side of \eqref{quasi} can be estimated as follows:
				\begin{equation*}
					\int_{\mathcal{M}} e^{-f}\,v^{2-m+t}\,|\nabla \varphi|^{2(t+1)}\,\varphi^{\theta-2(t+1)}\leq \int_{\mathcal{M}} e^{-f}\,|\nabla \varphi|^{2m-2}\leq C\,R^{2-2m+d}\,,
				\end{equation*}
				for a constant $C>0$ depending only on $d,m,\varepsilon$. Since $m>\frac{d+2}{2}$, it follows that the right-hand side converges to $0$ as $R\to+\infty$. Combining this with \eqref{quasi} and choosing a suitable $\varepsilon>0$ in terms of $d$ and $m$ yields $\mathtt{P}\equiv0$, which is a contradiction with $v>0$. So $v=0$, which is a contradiction with $u>0$. 
			\end{proof}
			
			\subsection{Proof of Theorem \ref{thm-infty-crit}}
			\begin{proof}
				As before, from the maximum principle for $\mathcal{L}$, either $u=0$ everywhere or $u>0$ everywhere. Let us assume the latter case, so it follows that $v>0$ and $\mathtt{P}>0$. 	
				
				According to the hypotheses of the theorem, a volume comparison theorem always holds with dimensional constant $d$. Therefore, in this proof, we may freely apply Corollary \ref{int-est-cor} and estimates \eqref{int-est-1}--\eqref{int-est-2}, with $n$ replaced by $d$.
				
				\medskip
				\noindent{\textbf{Preliminary step:}}				
				As in the proof of Theorem \ref{thm-leq-crit}, under certain assumptions on $u$ or the dimension $d$, we will derive that {$\nabla \mathtt{P}\equiv0$}. 
				Indeed, if $\mathtt{P}\equiv c>0$, from \eqref{ana-geo-eq} it follows that $k[v]=0$, which from $\eqref{eq-k-inf}$ implies
				\begin{align}
					&\nabla^2v=\frac{\Delta v}{d}g\,,\label{imp-a-inf}\\
					&\nabla f\cdot\nabla v\equiv0\,,\label{imp-c-inf}\\
					&\mathrm{Ric}_{\infty,d}\left(\nabla v,\nabla v\right)=0\label{imp-d-inf}\,.
				\end{align}
				Once again, \eqref{imp-d-inf} is not used in the sequel.
				By \eqref{imp-c-inf}, it follows that 
				$\Delta v=\mathcal{L}v=\mathtt{P}\equiv c$. Plugging this into \eqref{imp-a-inf} and using \cite[Theorem 2]{Tashiro}, it follows that $(\mathcal{M},g,\mu)$ is isometric to $\R^d$ with the standard metric. It now follows that $v$ has the form
				\begin{equation}\label{prelim-form-v-inf}
					v(x)=\frac{c}{2d}|x-o|^2+b\,,
				\end{equation}
				for some non-negative constant $b$ and some $o\in\mathcal{M}$. By plugging \eqref{prelim-form-v-inf} back into \eqref{eq-for-v} (and recalling that $m=d$), we find that $b=\frac{2}{d-2}\frac{1}{c}$, and converting back to $u$, it turns out that $u$ is precisely \eqref{at-bubble}. 
				
				Finally, we show that the weight $f$ is trivial by plugging \eqref{prelim-form-v-inf} into \eqref{imp-c-inf}. We derive
				\begin{equation*}
					r\,f_r=0\quad\mathrm{in}\;\;\R^d\,,
				\end{equation*}
				where the radii are taken with respect to the point $o\in\mathcal{M}=\R^d$, which, together with the stated regularity of $f$ implies $\nabla f\equiv0$.
				
				From now on we proceed similarly to the proof of Theorem \ref{thm-leq-crit}, using Proposition \ref{int-key-pro} (recall that since $\nabla f\cdot\nabla v\geq0$, it holds $W_{\!f}[v]\geq0$) to prove that $\nabla \mathtt{P}\equiv0$ under suitable assumptions.
				
				\medskip
				\noindent{\textbf{Proof of i), case $\boldsymbol{d=3}$.}}
				We consider \eqref{int-key-pro-eq} with $t=\frac{1}{2}+\delta$, $\delta\in(0,\frac{1}{2})$, and $\varepsilon<\delta$. Since $d=3$ satisfies
				\begin{equation*}
					0\leq \frac{d-2-t}{1-t} \leq \frac{d}{2}+1\,,
				\end{equation*}
				\emph{i.e.} \eqref{n-cond-2} with $n$ replaced by $d$, we may proceed exactly as in the proof of Theorem \ref{thm-leq-crit}; in particular, we simply redo the argument from \eqref{n-cond-2} to \eqref{r1} with $n$ replaced by $d$. Along the way, we crucially use \eqref{int-est-1}--\eqref{int-est-2} with $n$ replaced by $d$. We obtain a modified version of \eqref{r1}, which, inserted back into \eqref{int-key-pro-eq} yields exactly \eqref{pre-r-inf} with $n=d$.
				Taking $R\to+\infty$ concludes the argument.
				
				\medskip
				\noindent{\textbf{Proof of i), case $\boldsymbol{d=4}$.}}
				Note that $d=4$ satisfies 
				\begin{equation*}
					2\leq d-2+t<\frac{d}{2}+1\,.
				\end{equation*}
				Therefore, we may rewrite all of the computations from \eqref{int-key-pro-eq-appl-bis} to \eqref{pre-r-inf} with $n$ replaced by $d$. Taking $R\to+\infty$ again finishes the argument. 
				
				\medskip
				\noindent{\textbf{Proof of i), case $\boldsymbol{d=5}$.}}
				The exact same critical argument used in the proof of Theorem \ref{thm-leq-crit} $i)$ with $n=5$ also works in our case, just by suitably replacing $n$ with $d$. Once again, we find that \eqref{r2} holds with $n=d=5$ in the constant. We conclude by taking $R\to+\infty$.  
				
				\medskip
				\noindent{\textbf{Proof of ii)}}
				We proceed as in the Proof of Theorem \ref{thm-leq-crit} $iii)$. The key ingredients are once again the Yau estimate Lemma \ref{yau-lem} - which is valid thanks to the assumptions \eqref{dot-cond} and \eqref{lapl-comp-inf-eq} (actually \eqref{rough-lapl-comp}) - and \eqref{pw-decay-cond-d}.
				
				\medskip
				\noindent{\textbf{Proof of iii)}}
				Now we adapt the Proof of Theorem \ref{thm-leq-crit} $iv)$.
				First, under the assumption \eqref{lapl-comp-inf-eq}, and in view of the discussion around \eqref{bis-gro-inf}, \eqref{global-energy-claim} holds with $n=d$. Next, since \eqref{lapl-comp-inf-eq} again holds, we may use Lemma \ref{super-harm-lem} with $n=d$. These elements allow one to reproduce the proof, 
				thus we find that $\nabla\mathtt{P}\equiv0$ and $\nabla f\cdot\nabla v\equiv0$.
			\end{proof}
			
			\subsection{Proof of Theorem \ref{thm:exp-class}}\label{subsec:liou-proof}
			Let us quickly explain why making the stronger assumption $\mathrm{Ric}_{n,2}\geq0$ for some $n>2$ does not in any case allow us to drop the additional condition \eqref{dot-cond}. Indeed, let us consider \eqref{eq-k-inf} with $m=d=2$ and use $\mathrm{Ric}_{\infty,2}\,(\nabla v,\nabla v)=\mathrm{Ric}_{n,2}\,(\nabla v,\nabla v)+\frac{|\nabla f\cdot\nabla v|^2}{n-2}$. In this case, \eqref{eq-k-inf} becomes
			\begin{equation*}
				\mathsf{k}[v]=\norm{\nabla^2v-\frac{\Delta v}{2}g\,}^2_{\mathrm{H.S.}}\!+\mathcal{L}v\,\nabla f\cdot\nabla v+\frac{n}{2(n-2)}\left|\nabla f\cdot\nabla v\right|^2+\mathrm{Ric}_{n,2}\left(\nabla v,\nabla v\right).
			\end{equation*}
			{Due to the presence of the second (possibly sign-changing) term on the right-hand side}, in order to deduce a series of consequences from $\mathsf{k}[v]=0$, it is still necessary to assume \eqref{dot-cond}, \emph{i.e.} $\nabla f\cdot\nabla v\geq0$.
			
			\begin{proof}
				Our goal will be to show that $\mathtt{P}\equiv c$ for some $c>0$. From there, under the crucial assumption \eqref{dot-cond}, we may reuse the Preliminary step of the Proof of Theorem \ref{thm-infty-crit} to conclude that $v=e^{-\frac{u}{2}}$ is precisely \eqref{prelim-form-v-inf}, $\nabla f\equiv0$ and that $(\mathcal{M},g,\mu)$ is isometric to $\R^2$ with the standard metric. Converting $v$ back into $u$ gives that $u$ is equal to \eqref{log-bubble}. Before we begin, let us also recall that we assume that the volume growth comparison result \eqref{bis-gro-inf} holds for some $o\in\mathcal{M}$.
				
				To this end, let us begin by taking Lemma \ref{ibp-lem} with $q=1$, $m=2$, and $\psi=\varphi_R^2$, where $\varphi_R$ is the standard cut-off function centered at $o\in\mathcal{M}$ defined in Lemma \ref{cutoff-lem}. Indeed, applying \eqref{ibp-formula-1} and subsequently applying Young's inequality yields
				\begin{equation*}
					\begin{aligned}
						\int_{\mathcal{M}}e^{-f}\,v^{-1}\,\left|\nabla v\right|^2\,\varphi_R^2+\frac{1}{2}\int_{\mathcal{M}} e^{-f}\,v^{-1}\,\varphi_R^2
						&=-2\int_{\mathcal{M}} e^{-f}\,\varphi_R\,(\nabla v\cdot\nabla \varphi_R)\\
						&\leq\frac{1}{2}\int_{\mathcal{M}}e^{-f}\,v^{-1}\,|\nabla v|^2\,\varphi_R^2+\int_{\mathcal{M}}e^{-f}\,v\,|\nabla\varphi_R|^2\,.
					\end{aligned}
				\end{equation*}
				Rearranging terms and using the definition \eqref{eq-for-v} of $\mathtt{P}$, and the definition of $\varphi_R$, we obtain
				\begin{equation}\label{wc}
					\int_{\mathcal{B}_R(o)}e^{-f}\,\mathtt{P}\leq C\,R^{-2}\!\int_{\mathcal{A}_R(o)}e^{-f}\,v\,,
				\end{equation}
				for $C>0$ depending only on $d$.
				Now, we rewrite \eqref{pw-liou-cond} in terms of $v$
				\begin{equation*}
					v(x)\leq r^2(x)\,\mathcal{G}(r(x))\quad\mathrm{for}\;\mathrm{all}\;\;x\in\mathcal{M}\setminus\mathcal{B}_{R^*}(o)\,,
				\end{equation*}
				and plug this estimate into \eqref{wc} to obtain (for all $R>\frac{R^*}{2}$)
				\begin{equation}\label{wc2}
					\int_{\mathcal{B}_R(o)}e^{-f}\,\mathtt{P}\leq C\,\mathcal{G}(2R)\,\mu(\mathcal{A}_R(o))\leq C\,\mathcal{G}(2R)\,R^2\,,
				\end{equation}
				for some $C>0$ depending only on $d$ and the geometry of $\mathcal{M}$ in $\mathcal{B}_{R^*}(o)$.
				
				Also, a direct application of \eqref{fund-pw-ineq-eq} with $t\in[\frac{1}{2},1)$ and $m=d=2$ implies that
				\begin{equation}\label{pos-lapl}
					\mathcal{L}\,\mathtt{P}^t\geq0\,,\quad\forall t\in\left[\tfrac{1}{2},1\right),
				\end{equation}
				where once again we have crucially used the assumption that $\nabla f\cdot\nabla v\geq0$. Using the comments in \cite[p.~30]{PRS} and \cite[Proposition 1.3]{RS}, \eqref{g-cond}, \eqref{wc2}, and \eqref{pos-lapl} are enough to apply \cite[Theorem 1.1]{PRS} to determine that $\mathtt{P}^t\equiv c>0$. From this point, we may conclude as explained at the beginning of the proof.
			\end{proof}
			
			\section{Positive solutions with non-negative infinity curvature}\label{sec:bubble}
			In this section we construct critical and supercritical radial solutions of \eqref{leq}
			for $d\geq3$ 
			in radial \emph{weighted model manifolds} that satisfy $\mathrm{Ric}_{\infty,d}\geq0$ and have Euclidean volume growth rate \eqref{bis-gro-inf}, but \emph{do not} satisfy \eqref{dot-cond}, thus showing the optimality of the latter condition in Theorem \ref{thm-infty-crit}.
			
			\subsection{The Lane-Emden equation on model manifolds}
			Let us take $(\mathcal{M},g,\mu)$ to be a model manifold. That is, $\mathcal{M}$ has a \emph{pole} $o\in\mathcal{M}$, and in polar coordinates $p\mapsto(r,\theta)$ for $r=\mathrm{dist}(p,o)$ and $\theta\in\mathbb{S}^{d-1}$, and the metric $g$ has the expression
			\begin{equation}\label{mod-metric}
				g=\mathrm{d}r^2+\psi^2(r)\,\mathrm{d}\theta^2\quad\mathrm{on}\;\;\mathcal{M}\setminus\left\{o\right\}\,,
			\end{equation}
			where $d\theta$ is the usual metric on $\mathbb{S}^{d-1}$.
			The function
			$$\psi\in C^\infty\!\left(\left[0,+\infty\right)\right)\,,$$
			satisfies {for any non-negative integer $k$
			\begin{equation}\label{psi-nec2}
			\psi^{(2k)}(0)=0\,,\quad\psi'(0)=1\,,\quad\psi'(r)>0\quad\mathrm{for}\;\mathrm{all}\;\;r>0\,.
			\end{equation}
			}
			These assumptions are sufficient to guarantee that $\mathcal{M}$ is globally diffeomorphic to $\R^d$, and in particular that $g$ in \eqref{mod-metric} can be extended to the pole $o\in\mathcal{M}$. For more information on such manifolds, see for example \cite[Section 3]{Grigoryan} or \cite[Section 1.8]{AMR-book}. In this context, the Laplacian operator is written as 
			\begin{equation*}
				\Delta = \frac{\partial^2}{\partial r^2}+(d-1)\frac{\psi'}{\psi}\frac{\partial}{\partial r}+\Delta_{\mathbb{S}^{d-1}}\,,
			\end{equation*}
			where $\Delta_{\mathbb{S}^{d-1}}$ is the Laplace-Beltrami operator on $\mathbb{S}^{d-1}$. We also assume that the weight $f$ only depends on $r$ and is smooth up to $r=0$.
			
			Now, since we are looking for \emph{radial} solutions of \eqref{leq-d-crit}, the problem reduces to finding positive solutions of 
			the semilinear ordinary differential equation
			\begin{equation}\label{ODE}
				\begin{cases}
					-u''-(d-1)\frac{\psi'}{\psi}u'=u^p-f'u'\qquad r>0\,,\\
					u(0)>0\,,\\
					u'(0)=0\,.
				\end{cases}
			\end{equation}
			
			We recall that we are concerned with solutions satisfying $\nabla f\cdot\nabla u>0$ (somewhere in $\mathcal{M})$, which, in view of the radial setting and the previous observation, reduces to 
			\begin{equation*}
				f'(r)<0\quad\mathrm{for}\;\mathrm{some}\;\;r>0\,. 
			\end{equation*}
			The $\infty$-Bakry-\'{E}mery Ricci tensor reduces to a function of $r$:
			\begin{equation}\label{ber}
				\mathrm{Ric}_{\infty,d}=\underbrace{\left[-(d-1)\frac{\psi''}{\psi}+f''\right]}_{\mathrm{Ric}_{\infty,d}^{\mathrm{r}}}dr^2+\underbrace{\left[-{\psi''\psi}+(d-2)\!\left({1-\left(\psi'\right)^2}\right)+{\psi\,\psi'f'}\right]}_{\mathrm{Ric}_{\infty,d}^{\theta}}d\theta^2\,.
			\end{equation}
			See \cite[eq. (2)]{Brendle} and \cite[p. 634]{PR} for a justification of this formula. We recall that we will be constructing a solution with $\mathrm{Ric}_{\infty,d}\geq0$, so it will sufficient for \emph{both} $\mathrm{Ric}_{\infty,d}^{\mathrm{r}}$ and $\mathrm{Ric}_{\infty,d}^{\theta}$ to be non-negative.

			\subsection{Constructing a global positive solution - Proof of Theorem \ref{constr-sol-theo}}
			Let us make a few preliminary observations on the solvability and properties of solutions of \eqref{ODE}.
			
			\medskip
			\noindent{\textbf{Claim 1: local existence and monotonicity.}}
			Since $u(0)>0$, it is possible using standard methods to construct a local positive solution 
			of \eqref{ODE} for $r\in(0,R)$, for some $R>0$. 
			
			Next, let us notice that the equation in \eqref{ODE} can be rewritten in divergence form as
			\begin{equation}\label{ODE-var}
				-\left(e^{-f}\,\psi^{d-1}\,u'\right)'=e^{-f}\,\psi^{d-1}u^p\,.
			\end{equation}
			By integrating, it follows that 
			\begin{equation*}
				u'(r)=-e^{f(r)}\,\psi^{1-d}(r)\int_0^r e^{-f}\,\psi^{d-1}\,u^p\,ds<0\,,
			\end{equation*}
			so $u$ is always decreasing in its interval of existence. 
			
			\medskip
			\noindent{\textbf{Claim 2: energy is decreasing.}}
			First, under the assumption 
			\begin{equation}\label{f-cond-1}
				f'<(d-1)\frac{\psi'}{\psi}\iff \left(e^{-f}\,\psi^{d-1}\right)'>0\iff\mathcal{L}r>0\,,
			\end{equation}
			it is easy to rule out energy singularities developing in finite time. Indeed, using \eqref{ODE}, it holds
			\begin{equation*}
				\mathcal{E}'_u(r)=u'(r)\,u''(r)+u(r)^p\,u'(r)=u'(r)^2\left(-(d-1)\frac{\psi'}{\psi}+f'\right)<0\,,
			\end{equation*}
			under \eqref{f-cond-1}. From now on, we assume \eqref{f-cond-1}.
			
			\medskip
			\noindent{\textbf{Claim 3: the solution does not vanish with its derivative.}}
			Next, let us confirm that $u'(R)=u(R)=0$ is not a possibility for any $R>0$. To see this, an integration of \eqref{ODE-var} yields
			\begin{equation*}
				0=\int_0^Re^{-f}\,\psi^{d-1}\,u^p\,ds\,,
			\end{equation*}
			which is incompatible with $u>0$ in $(0,R)$.
			
			\medskip
			\noindent{\textbf{Claim 4: the solution does not vanish with a negative derivative.}}
			Finally, to prove the existence of a positive solution of \eqref{ODE}, it is now enough to rule out the case that $u(R)=0$ and $u'(R)<0$ for some $R>0$. 
			
			In what follows, we adapt the strategy used in \cite{BFG,BGGV,MurSoa}. That is, consider the energy function
			\begin{equation*}
				\mathcal{E}_u(r)=\frac{u'(r)^2}{2}+\frac{u(r)^{p+1}}{p+1}\,,
			\end{equation*}
			and the Pohozaev/Lyapunov function
			\begin{equation}\label{def-pur}
				\mathcal{P}_u(r)=\left(\int_0^re^{-f(s)}\,\psi(s)^{d-1}\,ds\right)\,\mathcal{E}_u(r)+\frac{1}{p+1}\,e^{-f(r)}\,\psi(r)^{d-1}\,u(r)\,u'(r)\,.    
			\end{equation} 
			We use these two functions to extend the locally-defined positive solution of \eqref{ODE} for all $r>0$.
			Notice that $u(R)=0$ and $u'(R)<0$ implies that $\mathcal{P}_u(R)>0$, so if we can show that $\mathcal{P}_u(r)\leq0$ for all $r$, we have obtained a contradiction. That is, the local solution $u$ can be extended for all $r>0$. 
			
			To this end, we calculate the derivative\footnote{From now on, we drop references to the argument $r$ where possible in order to lighten the notation.} 
			\begin{equation}\label{deriv-p-1}
				\mathcal{P}'_u(r)=\underbrace{\left[\left(\frac{1}{2}+\frac{1}{p+1}\right)\,e^{-f}\,\psi^{d-1}-\left(e^{-f}\psi^{d-1}\right)'\left(\int_0^re^{-f}\,\psi^{d-1}\,ds\right)\right]}_{\mathcal{K}(r)}\,u'(r)^2\,.
			\end{equation}
			Therefore, to construct a global positive solution, it is enough to show that $\mathcal{K}(r)\leq0$ for all $r$.
			
			It is now necessary to calculate $\mathcal{K}(r)$, and in particular $\int_0^re^{-f}\,\psi^{d-1}\,ds$, which is (up to a constant) the weighted volume of $\mathcal{B}_r$. To this end, we begin by making the calculation
			\begin{equation*}
				\begin{aligned}
					\int_0^re^{-f}\,\psi^{d-1}&=\int_0^r\left(e^{-f}\,\psi^{d-1}\right)'\frac{e^{-f}\,\psi^{d-1}}{\left(e^{-f}\,\psi^{d-1}\right)'}\\
					&=\frac{\left(e^{-f}\,\psi^{d-1}\right)^2}{\left(e^{-f}\,\psi^{d-1}\right)'}-\int_0^re^{-f}\,\psi^{d-1}\left(\frac{e^{-f}\,\psi^{d-1}}{\left(e^{-f}\,\psi^{d-1}\right)'}\right)'\\
					&=\frac{\left(e^{-f}\,\psi^{d-1}\right)^2}{\left(e^{-f}\,\psi^{d-1}\right)'}-\int_0^re^{-f}\,\psi^{d-1}+\int_0^r\left(\frac{e^{-f}\,\psi^{d-1}}{\left(e^{-f}\,\psi^{d-1}\right)'}\right)^2\left(e^{-f}\,\psi^{d-1}\right)''\,,
				\end{aligned}
			\end{equation*}
			where we have crucially used the assumption \eqref{f-cond-1}. 
									
			From this point, using the definition \eqref{ber} of the radial part of the $\mathrm{Ric}_{\infty,d}$ curvature tensor, we may continue the calculation to eventually find that 
			\begin{equation}\label{final-k}
				\begin{aligned}
					\mathcal{K}(r)&=\left(\frac{1}{2}+\frac{1}{p+1}-\frac{d-1}{d}\right)\,e^{-f}\,\psi^{d-1}\\
					&\quad +\frac{d-1}{d}\frac{\left(e^{-f}\,\psi^{d-1}\right)'}{e^{-f}\,\psi^{d-1}}\int_0^r\left(\frac{e^{-f}\,\psi^{d-1}}{\left(e^{-f}\,\psi^{d-1}\right)'}\right)^2e^{-f}\,\psi^{d-1}\left(\mathrm{Ric}_{\infty,d}^{\mathrm{r}}+\frac{2}{d-1}\mathcal{L}r\,f'+\frac{(f')^2}{d-1}\right)ds\,.
				\end{aligned}
			\end{equation}
			Note that \eqref{final-k} is consistent with the formulas established in \cite[pp. 769--770]{MurSoa} for the unweighted case.
			Therefore, to find a global solution of \eqref{ODE} (by proving that $\mathcal{K}(r)\leq0$), it is enough to find a pair $(\psi,f)$ satisfying
			\begin{equation*}
				\mathrm{Ric}_{\infty,d}^{\mathrm{r}}+\frac{2}{d-1}\mathcal{L}r\,f'+\frac{(f')^2}{d-1}\leq0\,,
			\end{equation*}
			which, in view of \eqref{ber} and the definition of $\mathcal{L}$, can be rewritten as 
			\begin{equation}\label{infty-exist-cond}
				-(d-1)\frac{\psi''}{\psi}+f''+2\frac{\psi'\,f'}{\psi}-\frac{(f')^2}{d-1}\leq0\,.
			\end{equation}
			Also, note that the constant $\frac{1}{2}+\frac{1}{p+1}-\frac{d-1}{d}$ is negative for $p>p_S(d)$ and vanishes for $p=p_S(d)$, so, if we can find a pair $(\psi,f)$ satisfying \eqref{infty-exist-cond}, the manifold $(\mathcal{M},g,\mu)$ admits \emph{both critical and supercritical solutions} of \eqref{leq}.
			
			Let us now summarize the conditions that we request $\psi$ and $f$ to satisfy in order to have the global existence of a positive solution to \eqref{ODE}, and $\mathrm{Ric}_{\infty,d}>0$ in $\mathcal{M}\setminus\{o\}$:
			\begin{enumerate}[i)]
				\item $0< \mathrm{Ric}_{\infty,d}^{\mathrm{r}}:=-(d-1)\frac{\psi''}{\psi}+f''$, for $r>0$,
				\item $0< \mathrm{Ric}_{\infty,d}^\theta:=-{\psi''\,\psi}+(d-2)\!\left(1-\left(\psi'\right)^2\right)+{\psi\,\psi'f'}$, for $r>0$,
				\item $\mathrm{Ric}_{\infty,d}^{\mathrm{r}}\leq-2\frac{\psi'\,f'}{\psi}+\frac{(f')^2}{d-1}$, (so $u$ is globally positive in view of \eqref{deriv-p-1}--\eqref{infty-exist-cond})\,.
			\end{enumerate}
			
To construct an example of $(\psi,f)$ satisfying $\mathrm{i)}$--$\mathrm{iii)}$, let us consider any $\psi$ such that \eqref{psi-nec2} holds, and that
\begin{equation} \label{psi-nec3}
	\psi'''(0) < 0 \quad\mathrm{and}\;\;
\psi'' (r) < 0 \;\;\mathrm{for}\;\mathrm{all}\;\;r>0\,.
\end{equation}
Furthermore, we request that
\begin{equation} \label{psi-nec4}
 \psi(r) = \alpha\, r + o (r) \quad \mathrm{and}\;\; \int_0^r\psi''(s)\,\psi(s)\,ds = o(r^{1-\varepsilon})\;\;\mathrm{as}\;\;r\to+\infty\,,
\end{equation}
for some $\alpha\in(0,1)$ and $\varepsilon >0$. There is an explicit example that satisfies \eqref{psi-nec2} and \eqref{psi-nec3}--\eqref{psi-nec4}:
			\begin{equation*}%\label{def-psi}
				\psi(r)=\alpha\, r+(1-\alpha)\frac{r}{\sqrt{r^2+1}}\quad\mathrm{for}\;\mathrm{some}\,\alpha\in(0,1)\,.
			\end{equation*} 
			Next, we set $f$ to satisfy
			\begin{equation}\label{ode-rel}
				f''+2\frac{\psi'}{\psi}f'=(d-1)\frac{\psi''}{\psi}\,,
			\end{equation}
			so $\mathrm{iii})$ above is immediately satisfied. Furthermore, \eqref{ode-rel} is a first-order linear differential equation in the variable $f'$, so there is a whole family of solutions
			\begin{equation}\label{def-f-expl}
				f(r)=f(0)+(d-1)\int_0^r\frac{\int_0^s\psi''(\tau)\,\psi(\tau)\,d\tau}{\psi^2(s)}\,ds\qquad\mathrm{for}\;\mathrm{all}\;\;r>0\,.
			\end{equation}
			Since $\psi''<0$ for all $r>0$, from \eqref{def-f-expl} we have $f'<0$ for all $r>0$. 
			From $f'<0$ and \eqref{ode-rel}, $\mathrm{i)}$ holds. This also implies that $\nabla f\cdot \nabla u>0$ in $\mathcal{M}\setminus\{o\}$.

			Now it is only left to check $\mathrm{ii)}$. In fact, by the sign of $\psi''$, it is enough to check that
			\begin{equation*} 
				\Psi:= (d-2)(1-\left(\psi'\right)^2)+\psi \psi'f'>0\,.
			\end{equation*}
			Taking first the derivative and then applying \eqref{ode-rel}, we derive
			\begin{align*}
				\Psi' &= -2(d-2)\,\psi'\psi''+\psi''\psi f'+(\psi')^2f'+\psi'\psi f''\\
				&=-(d-3)\psi'\psi''-(\psi')^2f'+\psi''\psi f' > 0\,.
			\end{align*} 
			In view of $f'<0$ and $\psi''<0$ it follows that $\Psi$ is increasing and 
			\begin{equation*}
			\Psi (r) > \Psi(0) = 0
			\end{equation*}
			This implies that $\mathrm{ii)}$ is satisfied.

\medskip
\noindent{\textbf{Conclusion: Proofs of further properties.}}\\
It is still left to prove properties 2), 3), and 7) of Theorem \ref{constr-sol-theo}. First, we notice that 
\begin{equation*}
	f'(r) = \frac{d-1}{3}\, \psi'''(0)\,
	r + o(r) \quad \textmd{as } r \to 0 \,,
\end{equation*}
and, from \eqref{psi-nec4},
\begin{equation*}
	f'(r) =  o(r^{-1-\varepsilon}) \quad \textmd{as } r \to +\infty \,,
\end{equation*}
which implies that $f$ is bounded. Now 2) (\emph{i.e.} \eqref{bis-gro-inf}) easily follows. Indeed, let $C_1$, $C_2$ be such that 
\begin{equation}\label{simple-bound-f}
	0<C_1<e^{-f(r)}<C_2\quad\mathrm{for}\;\mathrm{all}\;\;r>0\,.
\end{equation}
Furthermore, by construction, it holds that 
\begin{equation}\label{simple-bound-psi}
	\alpha\, r< \psi(r)< r\quad\mathrm{for}\;\mathrm{all}\;\;r>0\,.
\end{equation}
By these estimates, 
\begin{equation*}
	\mathrm{vol}_{f}(\mathcal{B}_r(o))=\int_0^r\,e^{-f}\,\psi^{d-1}\leq \tfrac{C_2}{d}\,r^d\,.
\end{equation*}

Next, we show 3), \emph{i.e.} that \eqref{lapl-comp-inf-eq} does not hold, but \eqref{rough-lapl-comp} does. To this end, from \eqref{def-f-expl} and integration by parts, we have 
\begin{equation}\label{lapl-rad-eq}
\begin{aligned}
	\mathcal{L}r&=(d-1)\frac{\psi'(r)}{\psi(r)}-f'(r)\\
	&=(d-1)\frac{\psi'(r)}{\psi(r)}-(d-1)\frac{\int_0^r\psi''(s)\,\psi(s)\,ds}{\psi^2(r)}\\
	&=(d-1)\frac{\int_0^r(\psi'(s))^2\,ds}{\psi^2(r)}\,,
\end{aligned}
\end{equation}
so \eqref{lapl-comp-inf-eq} holds if and only if $\chi(r):=\int_0^r(\psi'(s))^2\,ds-\frac{\psi^2(r)}{r}\leq 0$. However, it can be easily checked that 
\begin{equation*}
	\chi'(r)=\left(\psi'(r)-\frac{\psi(r)}{r}\right)^2>\left(\psi'-1\right)^2>0\quad\mathrm{for}\;\mathrm{all}\;r>0\,,
\end{equation*}
and $\chi(0)=0$,
where we have used \eqref{simple-bound-psi} and $\psi''<0$. Therefore $\chi(r)>0$ for all $r>0$ and \eqref{lapl-comp-inf-eq} fails. 

Although \eqref{lapl-comp-inf-eq} does not hold, the weaker estimate \eqref{rough-lapl-comp} \emph{does holds} for some $C>0$. This is once again a simple consequence of \eqref{simple-bound-f}--\eqref{simple-bound-psi}. Indeed, from \eqref{lapl-rad-eq}, we have\footnote{In view of Lemma \ref{super-harm-lem}, the following estimate implies an asymptotic lower bound on $u$.} 
\begin{equation*}
	\mathcal{L}r\leq \frac{d-1}{\alpha^2 r}\,.
\end{equation*}

Finally, we prove 7) - the asymptotic upper bound \eqref{asy-upp-bd-u} - by adapting \cite[Proof of Theorem 1.4 i)]{MurSoa}. We exploit the fact that $\mathcal{P}_u(r)\leq0$ defined in \eqref{def-pur} (which follows from \eqref{deriv-p-1} and $\mathcal{K}(r)\leq0$). Our starting point is to discard the term $\frac{u'(r)^2}{2}$ in $\mathcal{E}_u(r)$ and rearrange some terms: we obtain
\begin{equation*}
	\frac{-u'(r)}{u^p(r)}\geq\frac{\int_0^re^{-f(s)}\psi(s)^{d-1}\,ds}{e^{-f(r)}\psi(r)^{d-1}}\,.
\end{equation*}
The left-hand side is equal to $\frac{1}{p-1}(u(r)^{1-p})'$, whereas the right-hand side is estimated using \eqref{simple-bound-f}--\eqref{simple-bound-psi}.
This yields the estimate
\begin{equation*}
	\tfrac{1}{p-1}(u(r)^{1-p})'\geq\tfrac{C_1}{C_2}\tfrac{\alpha^{d-1}}{d}\,r \quad\mathrm{for}\;\mathrm{all}\;\;r>0\,,
\end{equation*}
integrating this inequality from $0$ to $r$ and suitably rearranging yields \eqref{asy-upp-bd-u} with $C=\frac{p-1}{2d}\frac{C_1}{C_2}\alpha^{d-1}$.

			\appendix
			\section{Weighted Laplacian comparison and consequences}\label{appen:comp}
			In this section, we consider some consequences of the weighted Laplacian comparison inequality
			\begin{equation}\label{lapl-comp-n-eq}
				\mathcal{L}(\mathrm{dist}(x,o))\leq \frac{\kappa-1}{\mathrm{dist}(x,o)}\quad\mathrm{weakly}\;\mathrm{in}\;\;\mathcal{M}\,,
			\end{equation}
			for some $o\in\mathcal{M}$ and some $\kappa>1$. 
			
			\begin{remark}[On the distance function and the cut locus]\label{dist-rem}
				On Riemannian manifolds the distance function from a point $o\in\mathcal{M}$ is globally Lipschitz continuous (with constant $1$) and smooth away from the cut locus of $o$ in $\mathcal{M}$, denoted by $\mathrm{cut}(o)$. Since the cut locus is a zero-measure set, \eqref{lapl-comp-n-eq} thus holds a.e. $x\in\mathcal{M}$. 
			\end{remark} 
			
			\normalcolor
			The following statement combines well-known results from \cite{BQ, Qian,WeiWylie}.
			\begin{lemma}
				Let $(\mathcal{M},g,\mu)$ be a complete weighted
				Riemannian manifold of dimension $d\geq2$. If $\mathrm{Ric}_{n,d}\geq0$ for some $n\geq d$, then for all $o\in\mathcal{M}$, \eqref{lapl-comp-n-eq} holds with $\kappa=n$
				and pointwise in $\mathcal{M}\setminus\mathrm{cut}(o)$,
				which implies
				\begin{equation}\label{bis-gro}
					\int_{\mathcal{B}_R(o)}e^{-f}\,\leq C\,R^n\quad\mathrm{for}\;\mathrm{all}\;R\geq r\,,
				\end{equation}
				for all $r>0$, where $C>0$ depends only on $d$, $n$ and the geometry of $\mathcal{M}$ in $\mathcal{B}_r(o)$. 
				
				If $\mathrm{Ric}_{\infty,d}\geq0$ and there exists a point $o\in\mathcal{M}$ such that $f'\geq0$ along all geodesics starting at $o$, then \eqref{lapl-comp-n-eq} holds with respect to $o$ for all $r>0$ and with $\kappa=d$. As a consequence, \eqref{bis-gro} holds in this case for $n=d$ and for $C>0$ depending only on $d$ and the geometry of $\mathcal{M}$ in $\mathcal{B}_r(o)$.
			\end{lemma}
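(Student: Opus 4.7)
The plan is to follow the weighted Bochner-Riccati-comparison program, originating in the works of Bakry-Qian and Wei-Wylie. Throughout, fix $o\in\mathcal{M}$ and write $r(x):=\mathrm{dist}(x,o)$, $f_r:=\partial_r f$, and $m(r):=\mathcal{L} r$. Since $r$ is smooth on $\mathcal{M}\setminus(\{o\}\cup\mathrm{cut}(o))$, I would work pointwise along a minimizing unit-speed geodesic from $o$ and extend the estimate across $\mathrm{cut}(o)$ weakly via Calabi's barrier argument. Applying the $\mathcal{L}$-Bochner identity \eqref{BE-Bochner} to $v=r$ and using $|\nabla r|\equiv 1$, together with Newton's inequality $\norm{\nabla^2 r}_{\mathrm{H.S.}}^{2}\geq (\Delta r)^2/(d-1)$ (valid because the radial direction is a zero eigendirection of $\nabla^2 r$), I would arrive at the Riccati-type inequality
\begin{equation*}
m'(r)+\frac{(m(r)+f_r(r))^2}{d-1}+\mathrm{Ric}_{\infty,d}(\partial_r,\partial_r)\leq 0\,.
\end{equation*}

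For the finite-dimensional case, I would substitute $\mathrm{Ric}_{\infty,d}=\mathrm{Ric}_{n,d}+f_r^2/(n-d)$ and exploit the elementary algebraic identity $\frac{(a+b)^2}{d-1}+\frac{b^2}{n-d}\geq \frac{a^2}{n-1}$ (verified by optimizing in $b$, which gives the minimizer $b=-a(n-d)/(n-1)$), upgrading the previous inequality to
\begin{equation*}
m'(r)+\frac{m(r)^2}{n-1}\leq -\mathrm{Ric}_{n,d}(\partial_r,\partial_r)\leq 0\,.
\end{equation*}
The standard substitution $v=1/m$ — combined with the asymptotic $m(r)\sim (d-1)/r$ as $r\to 0^+$ (following from the local Euclidean behavior of $\Delta r$ and the $C^2$ regularity of $f$) — yields the ODE comparison $m(r)\leq (n-1)/r$, which is \eqref{lapl-comp-n-eq} with $\kappa=n$. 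In the $\mathrm{Ric}_{\infty,d}\geq 0$ case, the Riccati inequality directly shows $m'\leq 0$, so $m$ is non-increasing; while $m\geq 0$, the hypothesis $f_r\geq 0$ ensures $(m+f_r)^2\geq m^2$ and an identical comparison gives $m(r)\leq (d-1)/r$, while once $m$ becomes non-positive the bound holds trivially. This yields \eqref{lapl-comp-n-eq} with $\kappa=d$.

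The volume bound \eqref{bis-gro} then follows by the classical Bishop-Gromov argument adapted to the weighted setting. Using the first variation of the weighted surface measure $\mathcal{A}_f(\rho):=\int_{\partial\mathcal{B}_\rho(o)}e^{-f}\,d\sigma$, the Laplacian comparison \eqref{lapl-comp-n-eq} translates into the monotonicity $(\log \mathcal{A}_f)'(\rho)\leq (\kappa-1)/\rho$, holding distributionally on $(0,\infty)$. Integrating gives $\mathcal{A}_f(\rho)\leq \mathcal{A}_f(r)(\rho/r)^{\kappa-1}$ for all $\rho\geq r$, whence
\begin{equation*}
\mu(\mathcal{B}_R(o))\leq \mu(\mathcal{B}_r(o))+\mathcal{A}_f(r)\,r^{1-\kappa}\!\int_r^R\!\rho^{\kappa-1}\,d\rho\leq C\,R^\kappa\,,
\end{equation*}
with $C$ depending only on $d$, $\kappa$, and the geometry of $\mathcal{M}$ inside $\mathcal{B}_r(o)$; specializing $\kappa=n$ in the first statement and $\kappa=d$ in the second closes the lemma. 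I expect the main technical subtlety to be the passage from pointwise inequalities on $\mathcal{M}\setminus\mathrm{cut}(o)$ to distributional inequalities on all of $\mathcal{M}$ (handled by Calabi's barrier trick) together with anchoring the ODE comparison at $r=0^+$; the algebraic step combining the $(d-1)$-Newton bound with the $f_r^2/(n-d)$ correction is elementary but is the key insight that produces the correct virtual dimension $n$ in the finite case, and its absence in the infinite case is precisely what forces the sign assumption on $f_r$.
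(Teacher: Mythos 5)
The paper does not actually supply a proof of this lemma: it is stated as a summary of known results with the comment ``The following statement combines well-known results from \cite{BQ, Qian, WeiWylie}.'' Your argument is a correct and self-contained reconstruction of precisely the weighted Riccati-comparison program from those references. The chain is sound at each step: the $\mathcal{L}$-Bochner identity applied to $v=r$ with $|\nabla r|\equiv 1$ and the Newton bound $\norm{\nabla^2 r}^2_{\mathrm{H.S.}}\geq(\Delta r)^2/(d-1)$ (using that $\nabla r$ is a null eigendirection of $\nabla^2 r$) yields the Riccati inequality $m'+\frac{(m+f_r)^2}{d-1}+\mathrm{Ric}_{\infty,d}(\partial_r,\partial_r)\leq 0$; your algebraic identity $\frac{(a+b)^2}{d-1}+\frac{b^2}{n-d}\geq\frac{a^2}{n-1}$ checks out (the minimizer $b=-a(n-d)/(n-1)$ gives equality), and this is exactly the mechanism by which the topological dimension $d$ is replaced by the virtual dimension $n$; anchoring at $m(r)\sim(d-1)/r\leq(n-1)/r$ as $r\to 0^+$ and the monotonicity of $r^2\bigl(m(r)-(n-1)/r\bigr)$ then gives the comparison. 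The $\mathrm{Ric}_{\infty,d}$ case is handled correctly: $m'\leq 0$ always, $f_r\geq 0$ gives $(m+f_r)^2\geq m^2$ while $m\geq 0$, and monotonicity prevents $m$ from turning positive again once it drops below zero. The weighted Bishop--Gromov integration via $(\log\mathcal{A}_f)'\leq(\kappa-1)/\rho$ is the standard completion, and you rightly flag the Calabi barrier as the device that upgrades the pointwise inequality on $\mathcal{M}\setminus\mathrm{cut}(o)$ to a global weak one (consistent with Remark~\ref{dist-rem} in the paper). In short, where the paper cites the literature, you have reproduced the underlying proof; no gaps.
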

			
			We state and prove a useful lemma regarding superharmonic functions in our generalized framework, the proof of which follows \cite[Lemma 2.8]{CM}.
			\begin{lemma}\label{super-harm-lem}
				Let $(\mathcal{M},g,\mu)$ be a complete, non-compact weighted Riemannian manifold of dimension $d\geq2$. Let $u\in C^2(\mathcal{M})$ be positive and $\mathcal{L}$-superharmonic, \emph{i.e.} $-\mathcal{L}u\geq0$, and assume that $(\mathcal{M},g,\mu)$ satisfies a Laplacian-type comparison inequality
				\eqref{lapl-comp-n-eq}
				for some $\kappa>2$ and $o\in\mathcal{M}$.
				Then for all $R>0$ there is a constant $A>0$ depending only on $R$, $\kappa$, and the behavior of $u$ on $\mathcal{B}_R(o)$ such that
				\begin{equation}\label{super-harm-ineq}
					u(x)\geq\frac{{A}}{\mathrm{dist}(x,o)^{\kappa-2}}\,,\quad\mathrm{for}\;\mathrm{all}\;\;x\in\mathcal{M}\setminus\mathcal{B}_R(o)\,.
				\end{equation}
			\end{lemma}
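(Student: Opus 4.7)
\textbf{Proof plan for Lemma \ref{super-harm-lem}.}

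The plan is to construct an explicit subharmonic radial barrier of the form $h(x)=c\,\rho(x)^{2-\kappa}$, where $\rho(x)=\mathrm{dist}(x,o)$, and compare it to $u$ via the maximum principle on a sequence of annuli $\mathcal{A}_{R,\tilde R}(o)=\mathcal{B}_{\tilde R}(o)\setminus\overline{\mathcal{B}_R(o)}$ with $\tilde R\to+\infty$. The exponent $2-\kappa<0$ is chosen precisely so that the natural homogeneity cancels against the comparison inequality \eqref{lapl-comp-n-eq}, yielding a weakly $\mathcal{L}$-subharmonic function that decays to zero at infinity.

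First I would perform the pointwise (in fact weak) computation on $\mathcal{M}\setminus\mathrm{cut}(o)$. Using $|\nabla\rho|=1$ and the definition \eqref{def-L} of $\mathcal{L}$, the chain rule gives
\begin{equation*}
\mathcal{L}(\rho^{2-\kappa})=(2-\kappa)(1-\kappa)\rho^{-\kappa}+(2-\kappa)\rho^{1-\kappa}\,\mathcal{L}\rho.
\end{equation*}
Since $2-\kappa<0$, the comparison $\mathcal{L}\rho\leq(\kappa-1)/\rho$ reverses sign upon multiplication, giving $(2-\kappa)\rho^{1-\kappa}\,\mathcal{L}\rho\geq(2-\kappa)(\kappa-1)\rho^{-\kappa}$, so the two terms cancel and
\begin{equation*}
\mathcal{L}(\rho^{2-\kappa})\geq0\quad\text{weakly in }\mathcal{M}\,.
\end{equation*}
In view of Remark \ref{dist-rem}, this weak inequality is meaningful on the whole of $\mathcal{M}$ (the cut locus has measure zero), which is precisely the regularity needed for a weak maximum principle against the continuous supersolution $u$.

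Next, given $R>0$, I set $m_R:=\min_{\partial\mathcal{B}_R(o)}u$, which is strictly positive by the hypothesis $u>0$ and the compactness of $\partial\mathcal{B}_R(o)$. For each $\tilde R>R$, I introduce the comparison function
\begin{equation*}
h_{\tilde R}(x):=c_{\tilde R}\left(\rho(x)^{2-\kappa}-\tilde R^{\,2-\kappa}\right),\qquad c_{\tilde R}:=\frac{m_R}{R^{2-\kappa}-\tilde R^{\,2-\kappa}}>0.
\end{equation*}
By construction $h_{\tilde R}\equiv0$ on $\partial\mathcal{B}_{\tilde R}(o)$ and $h_{\tilde R}\equiv m_R$ on $\partial\mathcal{B}_R(o)$, so $h_{\tilde R}\leq u$ on $\partial\mathcal{A}_{R,\tilde R}(o)$. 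Since $h_{\tilde R}$ is (weakly) $\mathcal{L}$-subharmonic and $u$ is (classically) $\mathcal{L}$-superharmonic, $u-h_{\tilde R}$ is a continuous weak $\mathcal{L}$-supersolution nonnegative on $\partial\mathcal{A}_{R,\tilde R}(o)$. The weak maximum principle on the bounded domain $\mathcal{A}_{R,\tilde R}(o)$ then yields $h_{\tilde R}\leq u$ throughout $\mathcal{A}_{R,\tilde R}(o)$.

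Finally, for any fixed $x\in\mathcal{M}\setminus\overline{\mathcal{B}_R(o)}$ I pick $\tilde R$ large enough that $x\in\mathcal{A}_{R,\tilde R}(o)$ and send $\tilde R\to+\infty$. Since $\kappa>2$ implies $\tilde R^{\,2-\kappa}\to0$, one gets $c_{\tilde R}\to m_R R^{\kappa-2}$ and
\begin{equation*}
u(x)\geq\lim_{\tilde R\to+\infty}h_{\tilde R}(x)=m_R R^{\kappa-2}\,\rho(x)^{2-\kappa}=\frac{A}{\mathrm{dist}(x,o)^{\kappa-2}},
\end{equation*}
with $A:=m_R R^{\kappa-2}$, which depends only on $R$, $\kappa$, and the (minimum) behavior of $u$ on $\partial\mathcal{B}_R(o)$, as claimed. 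The only mildly subtle step is the justification of the weak maximum principle despite $\rho$ being merely Lipschitz (not $C^2$) at the cut locus; this is standard but worth noting explicitly, for instance by working with the sup-convolution approximation or by invoking a distributional formulation, since the inequality $\mathcal{L}(\rho^{2-\kappa})\geq0$ is the weak one from \eqref{lapl-comp-n-eq}.
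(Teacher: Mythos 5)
Your proposal is correct and follows essentially the same approach as the paper's proof: both construct the radial subharmonic comparison function $\rho^{2-\kappa}$ using \eqref{lapl-comp-n-eq}, normalize by $\min_{\partial\mathcal{B}_R(o)}u$, apply the maximum principle on expanding annuli, and take a limit to obtain $A=R^{\kappa-2}\min_{\partial\mathcal{B}_R(o)}u$. The only cosmetic difference is that you subtract $\tilde R^{2-\kappa}$ from the barrier so it vanishes on the outer boundary, whereas the paper instead uses the $\liminf$ of $u-A\rho^{2-\kappa}$ at infinity; both are valid and equivalent.
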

			\begin{proof}
				Let us fix $o\in\mathcal{M}$, and for a fixed $R>0$, we define
				\begin{equation*}
					\overline{u}(x)=u(x)-\frac{A}{\mathrm{dist}(x,o)^{\kappa-2}}\quad\mathrm{in}\;\;\mathcal{M}\setminus \mathcal{B}_R(o)\,,
				\end{equation*}
				where $A=R^{\kappa-2}\min_{\partial \mathcal{B}_R(o)}u$. If one applies \eqref{lapl-comp-n-eq},
				it requires just a simple calculation to determine that 
				\begin{equation*}
					-\mathcal{L}\overline{u}\geq0\quad\mathrm{weakly}\;\;\mathrm{in}\;\;\mathcal{M}\setminus \mathcal{B}_R(o)\,.
				\end{equation*}
				Also, by construction we have
				\begin{equation}\label{bound-pos}
					\overline{u}\geq0\quad\mathrm{on}\;\;\partial \mathcal{B}_R(o)\,.
				\end{equation}
				Furthermore, by the positivity of $u$, it holds that
				\begin{equation}\label{harm-asym}
					\liminf_{|x|\to+\infty}\overline{u}(x)\geq0\,.
				\end{equation} 
				
				Now, by the maximum principle applied to $\overline{u}$ on open annuli $\mathcal{B}_{\lambda R}(o)\setminus \overline{\mathcal{B}_R(o)}$ for $\lambda>1$, we have
				\begin{equation*}
					\min_{\overline{\mathcal{B}_{\lambda R}(o)}\setminus \mathcal{B}_R(o)}\overline{u}=\min\left(\overline{u}\big|_{\partial\mathcal{B}_R(o)},\;\overline{u}\big|_{\partial \mathcal{B}_{\lambda R}(o)}\right)\,.
				\end{equation*}
				By taking $\lambda\to+\infty$, and applying \eqref{bound-pos}--\eqref{harm-asym}, we find that 
				$\overline{u}\geq0$ in $\mathcal{M}\setminus\mathcal{B}_R(o)$, and the thesis follows. 
			\end{proof}
			
			\section{Cheng-Yau--type estimates}\label{app:yau}
			Before we begin with the Yau--type estimates, let us briefly discuss cutoff functions on $\mathcal{M}$. Since our discussion involves the cut locus of $o$, let us recall from Remark \ref{dist-rem} that this set always has measure zero in Riemannian manifolds.  
			\begin{lemma}[Cutoff functions]\label{cutoff-lem}
				Let $(\mathcal{M},g,\mu)$ be a complete, connected, non-compact, weighted Riemannian manifold of dimension $d\geq2$. Then, for all $o\in\mathcal{M}$ and all $R>0$, there is a non-negative function $\varphi_{R}\in C^{0,1}_c(\mathcal{M})$ satisfying
				\begin{equation}\label{def-cutoff}
					\begin{gathered}
						0\leq\varphi_R\leq 1\quad\mathrm{in}\;\;\mathcal{M}\,,\quad\varphi_{R}=1\quad\mathrm{in}\;\;\mathcal{B}_R(o)\,,\quad\varphi_{R}=0\quad\mathrm{in}\;\; \mathcal{M}\setminus \mathcal{B}_{2R}(o)\,,\\
						\left|\nabla\varphi_{R}\right|\leq \frac{C}{R}\,,\quad \left|\nabla \varphi_R\right|^2\leq C\,\frac{\varphi_R}{R^2}\quad\mathrm{in}\;\;\mathcal{M}\setminus \mathrm{cut}(o)\,,
					\end{gathered}
				\end{equation}
				for $C>0$ depending only on $d$. Furthermore, if \eqref{lapl-comp-n-eq} holds for some $o\in\mathcal{M}$ and {$\kappa>1$}, then for all $R>0$,
				\begin{equation}\label{spec-cutoff}
					\quad -\mathcal{L}\,\varphi_R\leq\frac{C}{R^2}\quad\mathrm{in}\;\;\mathcal{M}\setminus\mathrm{cut}(o)\,,
				\end{equation}
				for $C>0$ depending only on $d$ and {$\kappa$}.
			\end{lemma}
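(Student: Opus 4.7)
The plan is to construct $\varphi_R$ as a radial composition of a one-variable cutoff with the distance function $r(x) := \mathrm{dist}(x,o)$, namely $\varphi_R(x) = \eta(r(x))$, where $\eta \in C^{0,1}([0,+\infty);[0,1])$ is chosen as $\eta(t) = 1$ on $[0,R]$, $\eta(t) = \bigl((2R-t)/R\bigr)^2$ on $[R,2R]$, and $\eta(t) = 0$ on $[2R,+\infty)$. Since $r$ is globally $1$-Lipschitz and smooth on $\mathcal{M}\setminus\mathrm{cut}(o)$ with $|\nabla r| = 1$ there (Remark \ref{dist-rem}), $\varphi_R$ automatically lies in $C^{0,1}_c(\mathcal{M})$ and has the required support properties.

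The first batch of estimates in \eqref{def-cutoff} then reduces to one-dimensional calculus: on $[R,2R]$ one has $\eta'(t) = -2(2R-t)/R^2$, so $|\eta'(t)| \leq 2/R$ and, crucially, $|\eta'(t)|^2 = 4(2R-t)^2/R^4 = (4/R^2)\,\eta(t)$. Via the chain rule this yields $|\nabla\varphi_R| \leq C/R$ and $|\nabla\varphi_R|^2 \leq C\,\varphi_R/R^2$ pointwise in $\mathcal{M}\setminus\mathrm{cut}(o)$. For \eqref{spec-cutoff}, at any point of $\mathcal{M}\setminus\mathrm{cut}(o)$ I compute
\begin{equation*}
	\mathcal{L}\varphi_R \, = \, \eta''(r)\,|\nabla r|^2 + \eta'(r)\,\mathcal{L}r \, = \, \eta''(r) + \eta'(r)\,\mathcal{L}r\,.
\end{equation*}
Since $\eta''(t) = 2/R^2$ on $[R,2R]$ is bounded, and since $\eta'(r) \leq 0$ combined with the hypothesis \eqref{lapl-comp-n-eq}, $\mathcal{L}r \leq (\kappa-1)/r$, and $r \geq R$ on the support of $\eta'$ give
\begin{equation*}
	\eta'(r)\,\mathcal{L}r \, \geq \, \eta'(r)\,\frac{\kappa-1}{r} \, \geq \, -\frac{2(\kappa-1)}{R^2}\,,
\end{equation*}
the combination yields $-\mathcal{L}\varphi_R \leq C(d,\kappa)/R^2$ as required.

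The main (minor) subtlety in the argument is the choice of cutoff profile: one needs the \emph{enhanced} gradient bound $|\nabla\varphi_R|^2 \leq C\,\varphi_R/R^2$ in addition to the plain $|\nabla\varphi_R| \leq C/R$, and this forces $\eta$ to vanish to order at least two at $t = 2R$, which is precisely why the quadratic factor $\bigl((2R-t)/R\bigr)^2$ is taken. The mild loss of $C^2$-regularity of $\eta$ at $t = R,\,2R$ is harmless because $\varphi_R$ is only required to be Lipschitz with a.e.-defined bounds, and all downstream applications in the main text integrate $\varphi_R$ against $e^{-f}$-weighted measures and only see the estimates a.e. Should a smooth cutoff be desired one can replace the quadratic piece with a $C^\infty$ interpolant that still vanishes to order $\geq 2$ at $t = 2R$, preserving every estimate above.
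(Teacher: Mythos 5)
Your construction follows the same route as the paper: build $\varphi_R$ as a radial composition $\phi_R(\mathrm{dist}(\cdot,o))$ with a one-dimensional profile, and use the chain rule together with the Laplacian comparison \eqref{lapl-comp-n-eq} to get the bound \eqref{spec-cutoff}. The paper simply asserts the existence of a $C^2$ profile $\phi_R$ with $-\phi_R'\leq C/R$ and $-\phi_R''\leq C/R^2$, while you give the explicit choice $\eta(t)=\bigl((2R-t)/R\bigr)^2$ on $[R,2R]$, for which the enhanced bound $|\eta'|^2 = (4/R^2)\,\eta$ is immediate. That explicitness is welcome, and your computation $\eta'(r)\,\mathcal{L}r \geq -2(\kappa-1)/R^2$ reproduces the paper's argument correctly where the functions are smooth.

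The gap lies in the regularity of your profile, and it is not the harmless bookkeeping issue you claim. Your $\eta$ is \emph{not} $C^1$ at $t=R$: one has $\eta'(R^-)=0$ but $\eta'(R^+)=-2/R$, so $\varphi_R$ has a corner on $\partial\mathcal{B}_R(o)$. The paper demands $\phi_R\in C^2$ precisely because the estimate \eqref{spec-cutoff} is not merely integrated in the sequel — it is applied \emph{pointwise} at a global maximum point $x_0$ of $\mathcal{Q}=\mathcal{G}\,\varphi_R$ in the Bernstein-type Cheng--Yau argument of Lemma \ref{yau-lem}. Nothing in that proof excludes $x_0\in\partial\mathcal{B}_R(o)$, and at a corner point neither the first-order condition $\nabla\mathcal{Q}(x_0)=0$ nor the second-order condition $\Delta\mathcal{Q}(x_0)\leq 0$ is available classically, so the chain $\mathcal{LQ}(x_0)=\mathcal{LG}\,\varphi_R+2\nabla\mathcal{G}\cdot\nabla\varphi_R+\mathcal{G}\,\mathcal{L}\varphi_R$ breaks down. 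Equivalently, the distributional $-\mathcal{L}\varphi_R$ of your profile picks up a positive singular part supported on $\partial\mathcal{B}_R(o)$, which is not dominated by $C/R^2$. Your statement that ``all downstream applications only see the estimates a.e.'' is therefore wrong for this particular bound. The remedy you mention at the end — replacing the quadratic piece by a $C^\infty$ interpolant — is in fact mandatory, not optional; and note that the essential point is to make the profile $C^2$ at $t=R$ (matching value $1$ and vanishing first derivative there), whereas you only emphasize the second-order vanishing at $t=2R$ (which is what preserves $|\eta'|^2\lesssim\eta/R^2$, but is a different concern). Once that smoothing is made primary rather than an afterthought, the proof coincides with the paper's.
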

			\begin{remark}
				The existence of cutoff functions satisfying \eqref{def-cutoff} is classical - see \emph{e.g.} \cite[Lemma 1]{Karp} or \cite[Proof of Theorem 5.1]{Li}. 
				Notice that, in view of Remark \ref{dist-rem}, to state estimates involving derivatives of $\varphi_R$ we must avoid the cut locus of $o$ in $\mathcal{M}$, where the distance function fails to be smooth. 
			\end{remark}
			\begin{proof}[Proof of Lemma \ref{cutoff-lem}]
				As remarked above, it is only necessary to provide a proof of \eqref{spec-cutoff}. The cutoff function $\varphi_R$ may be constructed by means of the composition $\varphi_R(x)=\phi_R(\mathrm{dist}(x,o))$, where $\phi_R\in C^2\!\left([0,+\infty)\right)$ satisfies 
				\begin{gather*}
					\phi_R\equiv1\quad\mathrm{on}\;\;[0,R]\,,\quad\phi_R\equiv0\quad\mathrm{on}\;\;[2R,4R)\,,\\
					-\phi_R'\leq \frac{C}{R}\,,\quad -\phi_R''\leq \frac{C}{R^2}\,,\quad\mathrm{on}\;\;[0,4R)\,,
				\end{gather*}
				for some $C>0$ depending only on $d$. Let us set $r=\mathrm{dist}(x,o)$. Now, using \eqref{lapl-comp-n-eq}, it holds 
				\begin{equation*}
					\begin{aligned}
						-\mathcal{L}\,\varphi_R&=-\phi_R'\,\mathcal{L}r-\phi_R''\leq -\frac{(\kappa-1)\,\phi_R'}{R}-\phi_R''\leq\frac{C}{R^2}\,,
					\end{aligned}
				\end{equation*}
				as desired. This computation is valid only outside the cut locus.
			\end{proof}
			
			Note that when $n>d$, similar estimates to the following ones were proven in \cite{Lu}.
			\begin{lemma}\label{yau-lem}
				Let $(\mathcal{M},g,\mu)$ be a complete, non-compact, boundaryless weighted Riemannian manifold satisfying $\mathrm{Ric}_{n,d}\geq0$ for some $n\geq d$. If $u\in C^3(\mathcal{M})$ is positive solution of \eqref{leq-crit}, then for all $o\in\mathcal{M}$,
				\begin{equation}\label{yau-ineq}
					\sup_{\mathcal{B}_R(o)}\frac{|\nabla u|^2}{u^2}\leq C\left(\frac{1}{R^2}+\sup_{\mathcal{B}_{2R}(o)}u^{\frac{4}{n-2}}\right),
				\end{equation}
				where $C>0$ depends only on $d$ and $n$. 
				
				If instead we have $\mathrm{Ric}_{\infty,d}\geq0$, and \eqref{lapl-comp-n-eq} holds for some $\kappa>1$, and \eqref{dot-cond} holds, 
				then \eqref{yau-ineq} still holds, with $n$ replaced by $d$, and with $C>0$ now depending only on $d$ and $\kappa$.
			\end{lemma}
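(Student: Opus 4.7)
The plan is to adapt the classical Cheng-Yau gradient estimate to the weighted setting via a maximum-principle argument applied to a cutoff of $Q := |\nabla w|^2$, where $w := \log u$. A direct calculation using $-\mathcal{L}u = u^p$ with $p = \tfrac{n+2}{n-2}$ gives
\begin{equation*}
  \mathcal{L}w \,=\, -u^{p-1} - Q\,.
\end{equation*}
The strategy is to derive a pointwise differential inequality of the form
\begin{equation*}
  \tfrac{1}{2}\,\mathcal{L}Q + \nabla w\cdot\nabla Q \,\geq\, c_1\,Q^2 \,-\, c_2\,u^{2(p-1)}
\end{equation*}
for some positive constants $c_1,c_2$ depending on the relevant parameters, and then localize it on $\mathcal{B}_{2R}(o)$ by introducing the cutoff $\varphi := \varphi_R^2$ and evaluating at an interior maximum of $H := \varphi\,Q$.

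For the first part, the refined Bakry-\'{E}mery Bochner inequality under $\mathrm{Ric}_{n,d}\geq 0$,
\begin{equation*}
  \tfrac{1}{2}\,\mathcal{L}Q \,\geq\, \tfrac{(\mathcal{L}w)^2}{n} \,+\, \nabla w\cdot\nabla \mathcal{L}w \,+\, \mathrm{Ric}_{n,d}(\nabla w,\nabla w)\,,
\end{equation*}
supplies the crucial quadratic term $(u^{p-1}+Q)^2/n$ upon substituting $\mathcal{L}w = -u^{p-1}-Q$; expanding the square and using Cauchy-Schwarz on the cross term $u^{p-1}Q$ (together with the identity $p-1 = 4/(n-2)$) produces the target inequality with constants depending only on $n$. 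Next, at an interior maximum point $x_0\in\mathcal{B}_{2R}(o)$ of $H$, the conditions $\nabla H(x_0)=0$ and $\mathcal{L}H(x_0)\leq 0$, combined with the cutoff estimates from Lemma \ref{cutoff-lem}, in particular $|\nabla\varphi_R|^2\leq C\,\varphi_R/R^2$ and $-\mathcal{L}\varphi_R\leq C/R^2$ (the latter relying on Laplacian comparison), convert the differential inequality for $Q$ into a scalar quadratic inequality of the shape
\begin{equation*}
  c_1\,H(x_0)^2 \,\leq\, \tfrac{C}{R^2}\,H(x_0) \,+\, C\,u^{2(p-1)}(x_0)\,\varphi(x_0)\,.
\end{equation*}
Solving this quadratic and restricting to the set $\{\varphi\equiv 1\}\supset\mathcal{B}_R(o)$ yields \eqref{yau-ineq}.

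The main obstacle is the second part, where the Bakry-\'{E}mery Bochner formula \eqref{BE-Bochner} \emph{lacks} the crucial $(\mathcal{L}w)^2/n$ term that drove the quadratic absorption above. To recover an analogue, I would instead retain the raw Hessian bound $\norm{\nabla^2 w}^2_{\mathrm{H.S.}} \geq (\Delta w)^2/d$ and combine it with the identity $\Delta w = \mathcal{L}w + \nabla f\cdot\nabla w$. The decisive role of hypothesis \eqref{dot-cond} is that $\nabla f\cdot\nabla w = (\nabla f\cdot\nabla u)/u \leq 0$, so $\Delta w \leq -u^{p-1}-Q < 0$, and hence $(\Delta w)^2 \geq (u^{p-1}+Q)^2$. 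Substituting back into \eqref{BE-Bochner} produces the same type of differential inequality as in the first part, but with $d$ in place of $n$. The cutoff estimate $-\mathcal{L}\varphi_R\leq C/R^2$ now rests on the stand-alone hypothesis \eqref{lapl-comp-n-eq} with constant $\kappa$, rather than on any finite Bakry-\'{E}mery dimension --- which is precisely why the final constant depends on $\kappa$. With these two modifications the maximum-principle argument goes through verbatim and delivers the claimed estimate with $n$ replaced by $d$.
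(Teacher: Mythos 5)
Your proposal takes essentially the same route as the paper: both adapt the Cheng--Yau/Bernstein method, applying the weighted Bochner formula to a logarithmic transform of $u$, localizing with a cutoff, and evaluating at an interior maximum; in the finite-$n$ case both rely on the refined inequality carrying the $(\mathcal{L}w)^2/n$ term, and in the $\infty$-case both retain $\norm{\nabla^2 w}^2_{\mathrm{H.S.}}\geq(\Delta w)^2/d$ and invoke \eqref{dot-cond} to sign the $\nabla f\cdot\nabla w$ contribution. The only differences are cosmetic: you work with $w=\log u$ and localize $|\nabla w|^2$ alone, whereas the paper sets $w=-\log u$ and localizes the full quantity $\mathcal{G}=\mathcal{L}w=|\nabla w|^2+e^{-\beta w}$, which avoids a separate Young-inequality step to absorb the cross term coming from $(\mathcal{L}w)^2$.
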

			\begin{proof}
				First of all, if $n=d$, the statement reduces trivially to \cite[Lemma 2.2]{FMM}, \cite[Theorem A.3]{CFP}. Therefore, we assume from now on that $n>d$ or $n=\infty$.
				
				\medskip
				\noindent{\textbf{Case $\boldsymbol{n>d}$}:}
				The strategy, which is an adaptation of the classical Bernstein technique, essentially consists in evaluating the function 
				\begin{equation*}
					\mathcal{Q}=\mathcal{G}\times\varphi_R\,,
				\end{equation*}
				at a point of global maximum $x_0$, where $\varphi_R$, defined in Lemma \ref{cutoff-lem}, is a standard cut-off function centered at $o\in\mathcal{M}$, and $\mathcal{G}$ is a positive function to be defined below\footnote{Note that from now on we are assuming that $x_0\notin\mathrm{cut}(o)$. If this is not the case, one may follow the classical approximation technique in \cite[Proof of Theorem 3]{Calabi} or \cite[Proof of Theorem 3]{CY}. See also \cite[Proof of Theorem 6.1]{PLi}.}. By such a construction, it follows that $x_0\in\mathcal{B}_{2R}(o)$, and
				\begin{equation}\label{Q-grad}
					0=\nabla\mathcal{Q}(x_0)=\nabla\mathcal{G}(x_0)\,\varphi_R(x_0)+\mathcal{G}(x_0)\,\nabla\varphi_R(x_0)\,,
				\end{equation}
				\begin{equation}\label{Q-lapl}
					\Delta\mathcal{Q}(x_0)\leq0\,.
				\end{equation}
				Now using \eqref{Q-grad}--\eqref{Q-lapl} and the definition of $\mathcal{L}$, it holds
				\begin{equation}\label{lq-est}
					\begin{aligned}
						0\geq\Delta\mathcal{Q}(x_0)&=\mathcal{LQ}(x_0)+\nabla f(x_0)\cdot\nabla\mathcal{Q}(x_0)\\
						&=\mathcal{LQ}(x_0)\\
						&=\mathcal{LG}(x_0)\,\varphi_R(x_0)+2\,\nabla\mathcal{G}(x_0)\cdot\nabla\varphi_R(x_0)+\mathcal{G}(x_0)\,\mathcal{L}\varphi_R(x_0)\,.
					\end{aligned}
				\end{equation}
				
				Now, let us define $\mathcal{G}$. To this end, we set $w=-\log u$ and $\beta=\frac{4}{n-2}$, and it is straightforward to verify from \eqref{leq} that
				\begin{equation}\label{eq-for-w}
					\mathcal{L}w=|\nabla w|^2+e^{-\beta w}=:\mathcal{G}\,,
				\end{equation}
				where, as indicated, $\mathcal{G}$ will denote the right-hand side of \eqref{eq-for-w}. Let us now estimate $\mathcal{L}\mathcal{G}$ from below, directly employing the generalized Bochner formula \eqref{BE-Bochner}. That is, (from now on taking for granted that we are evaluating all functions at $x_0$)
				\begin{equation}\label{big-ineq}
					\begin{aligned}
						\mathcal{LG}&=\mathcal{L}|\nabla w|^2+\mathcal{L}e^{-\beta w}\\
						&=2\norm{\nabla^2 w}^2_{\mathrm{H.S.}}+2\,\nabla w\cdot\nabla\mathcal{L} w +2\,\mathrm{Ric}_{\infty,d}(\nabla w,\nabla w)+\beta^2\,e^{-\beta w}\,|\nabla w|^2-\beta\,e^{-\beta w}\mathcal{L}w\\
						&\geq \tfrac{2}{d}(\Delta w)^2+2\,\nabla w\cdot\nabla\mathcal{G} +2\,\mathrm{Ric}_{n,d}(\nabla w,\nabla w)+\tfrac{2}{n-d}|\nabla f\cdot\nabla w|^2-\beta\,e^{-\beta w}\mathcal{G}\\
						&\geq\tfrac{2}{d}(\Delta w)^2+2\,\nabla w\cdot\nabla\mathcal{G} +\tfrac{2}{n-d}|\nabla f\cdot\nabla w|^2-\beta\,e^{-\beta w}\mathcal{G}\\
						&=\tfrac{2}{n}\mathcal{G}^2+2\,\nabla w\cdot\nabla\mathcal{G} +\tfrac{2(n-d)}{nd}\!\left(\mathcal{G}+\tfrac{n}{n-d}\nabla f\cdot\nabla w\right)^2-\beta\,e^{-\beta w}\mathcal{G}\\
						&\geq \tfrac{2}{n}\mathcal{G}^2+2\,\nabla w\cdot\nabla\mathcal{G} -\beta\,e^{-\beta w}\mathcal{G}\,,
					\end{aligned}
				\end{equation}
				where in the second-to-last line we have simply completed the square using the definition of $\mathcal{L}$, and in the last line we have discarded a positive term which will not be used below. Throughout, we have used $\mathcal{L}w=\mathcal{G}$ to simplify.
				
				Next, let us plug the above estimate into \eqref{lq-est}, multiply by $\varphi_R$, and also use \eqref{Q-grad} to rewrite $\nabla\mathcal{G}(x_0)$ (and divide by $\mathcal{G}=\frac{\mathcal{Q}}{\varphi_R}$). Using these ingredients yields
				\begin{equation*}
					\begin{aligned}
						\frac{2}{n}\mathcal{Q}&\leq 2\,\nabla w\cdot\nabla\varphi_R+2\frac{|\nabla\varphi_R|^2}{\varphi_R}-\mathcal{L}\varphi_R+\beta\,e^{-\beta w}\,\varphi_R\\
						&\leq \varepsilon\,|\nabla w|^2\,\varphi_R+\left(2+\frac{1}{\varepsilon}\right)\frac{|\nabla\varphi_R|^2}{\varphi_R}-\mathcal{L}\varphi_R+\beta\,e^{-\beta w}\,\varphi_R\,.
					\end{aligned}
				\end{equation*}
				We notice that by definition, $|\nabla w|^2\,\varphi_R\leq \mathcal{G}\,\varphi_R=\mathcal{Q}$. This gives
				\begin{equation*}
					\left(\frac{2}{n}-\varepsilon\right)\mathcal{Q}\leq \left(2+\frac{1}{\varepsilon}\right)\frac{|\nabla\varphi_R|^2}{\varphi_R}-\mathcal{L}\varphi_R+\beta\,e^{-\beta w}\,\varphi_R\,.
				\end{equation*}
				Now using the properties \eqref{def-cutoff}--\eqref{spec-cutoff} of $\varphi_R$, the definition $\mathcal{Q}=\mathcal{G}\,\varphi_R$ and recalling that we are evaluating at $x_0$, we have 
				\begin{equation}\label{bern-1}
					\mathcal{Q}(x_0)\leq C\left(\frac{1}{R^2}+e^{-\beta w(x_0)}\right),
				\end{equation}
				where $C>0$ depends only on $d$ and $n$. Now, since by construction $\mathcal{Q}=\mathcal{G}\,\varphi_R$ achieves its maximum at $x_0\in \mathcal{B}_{2R}(o)$, we have from \eqref{bern-1},
				\begin{equation*}
					\sup_{\mathcal{B}_R(o)}|\nabla w|^2\leq \sup_{\mathcal{B}_R(o)}\mathcal{G}\leq\mathcal{Q}(x_0)\leq C\left(\frac{1}{R^2}+\sup_{\mathcal{B}_{2R}(o)}e^{-\beta w}\right),
				\end{equation*}
				where we have used the definition of $\mathcal{G}$ on the left-hand side. The thesis follows simply by converting $w$ back into $u$.
				
				\medskip
				\noindent{\textbf{Case $\boldsymbol{n=\infty}$}:}
				We use the same strategy, only making a few modifications. Let us begin by studying the second line of \eqref{big-ineq}, and continue the chain of inequalities, keeping in mind that $\mathrm{Ric}_{\infty,d}\geq0$ holds, rather than the stronger condition $\mathrm{Ric}_{n,d}\geq0$ for some $n>d$. We have
				\begin{equation*}
					\begin{aligned}
						\mathcal{LG}&=2\norm{\nabla^2 w}^2_{\mathrm{H.S.}}+2\,\nabla w\cdot\nabla\mathcal{L} w +2\,\mathrm{Ric}_{\infty,d}(\nabla w,\nabla w)+\beta^2\,e^{-\beta w}\,|\nabla w|^2-\beta\,e^{-\beta w}\mathcal{L}w\\
						&\geq\tfrac{2}{d}(\Delta w)^2+2\,\nabla w\cdot\nabla\mathcal{G} -\beta\,e^{-\beta w}\mathcal{G}\\
						&\geq\tfrac{2}{d}\mathcal{G}^2+\tfrac{4}{d}\mathcal{G}\,\nabla f\cdot\nabla w+2\,\nabla w\cdot\nabla\mathcal{G} -\beta\,e^{-\beta w}\mathcal{G}\,.
					\end{aligned}
				\end{equation*}
				Under the assumption \eqref{dot-cond} and \eqref{eq-for-w}, it holds that $\mathcal{G}\,\nabla f\cdot\nabla w\geq0$, so that term may be discarded.
				We may now proceed exactly as in the case $n>d$ to conclude.
			\end{proof} 
			
			\Acknowledgments
The authors express their gratitude to S. Pigola, who kindly discussed with them a preliminary version of this paper.			
			
			A.F. has been partially supported by ANR EINSTEIN CONSTRAINTS: past, present, and future. Research project ANR-23-CE40-0010-02.
			G.C. and T.P. have been supported by the Research Project of the Italian Ministry of University and Research (MUR) PRIN 2022 “Partial differential equations and related geometric-functional inequalities”, grant number 20229M52AS\_004. G.C. and T.P. are members of the “Gruppo Nazionale per l'Analisi Matematica, la Probabilità e le loro Applicazioni” (GNAMPA) of the “Istituto Nazionale di Alta Matematica” (INdAM, Italy) and have been partially supported by the “INdAM - GNAMPA Project”, CUP \#E5324001950001\#.

		\end{document}